\newcommand{\Ec}[1]{\mathbb{E} \left[#1\right]}
\newcommand{\Pp}[1]{\mathbb{P} \left(#1\right)}
\newcommand{\Ecsq}[2]{\mathbb{E} \left[#1\mathrel{}\middle|\mathrel{}#2\right]}
\newcommand{\Ppsq}[2]{\mathbb{P} \left(#1\mathrel{}\middle|\mathrel{}#2\right)}
\newcommand{\enstq}[2]{\left\lbrace#1\mathrel{}\middle|\mathrel{}#2\right\rbrace}
\newcommand{\indicator}[1]{\mathbbm{1}_{#1}}
\newcommand{\intervalle}[4]{\mathopen{#1}#2
	\mathclose{}\mathpunct{},#3
	\mathclose{#4}}
\newcommand{\intervalleof}[2]{\intervalle{(}{#1}{#2}{]}}
\newcommand{\intervallefo}[2]{\intervalle{[}{#1}{#2}{)}}
\newcommand{\intervalleoo}[2]{\intervalle{(}{#1}{#2}{)}}
\newcommand{\MAST}{\mathrm{MAST}}
\newcommand{\Tt}{\mathcal{T}}
\newcommand{\Rr}{\mathcal{R}}
\newcommand{\Bb}{\mathcal{B}}
\newcommand{\ii}{\mathbf{i}}
\newcommand{\jj}{\mathbf{j}}
\newcommand{\ikx}{\mathbf{i}_k(x)}
\newcommand{\ijx}{\mathbf{i}_j(x)}
\newcommand{\Dir}{\mathrm{Dir}}
\newcommand{\diam}{\mathrm{diam}}
\newcommand{\eps}{\varepsilon}
\newtheorem{theorem}{Theorem}
\newtheorem{definition}[theorem]{Definition}
\newtheorem{proposition}[theorem]{Proposition}
\newtheorem{corollary}[theorem]{Corollary}
\newtheorem{lemma}[theorem]{Lemma}
\newtheorem{remark}[theorem]{Remark}
\newtheorem*{claim*}{Claim}
\begin{document}
\title{Maximum Agreement Subtrees and Hölder homeomorphisms between Brownian trees}
\author{
	Thomas \textsc{Budzinski}\thanks{ ENS de Lyon and CNRS.\hfill  \href{mailto:thomas.budzinski@ens-lyon.fr}{\texttt{thomas.budzinski@ens-lyon.fr}}}
	\qquad\&\qquad
	Delphin \textsc{S\'enizergues}\thanks{MODAL'X, UPL, Univ. Paris Nanterre, CNRS, F92000 Nanterre France.\hfill  \href{mailto:dsenizer@parisnanterre.fr}{\texttt{dsenizer@parisnanterre.fr}}}
}

\maketitle

\abstract{We prove that the size of the largest common subtree between two uniform, independent, leaf-labelled random binary trees of size $n$ is typically less than $n^{1/2-\eps}$ for some $\eps>0$. Our proof relies on the coupling between discrete random trees and the Brownian tree and on a recursive decomposition of the Brownian tree due to Aldous. Along the way, we also show that almost surely, there is no $(1-\eps)$-H\"older homeomorphism between two independent copies of the Brownian tree.}

\section{Introduction}

\paragraph{Maximum agreement subtree.}
Let $t, t'$ be two binary trees with $n$ leaves labelled from $1$ to $n$. The \emph{maximum agreement subtree} of $t$ and $t'$ is the size of the largest subset $I \subset \{1,\dots,n\}$ such that the subtrees of $t$ and $t'$ induced by the labels of $I$ are the same (as on Figure~\ref{fig:example mast}, see also Section~\ref{subsec_discrete_defns} for precise definitions). This quantity, which will be denoted by $\MAST(t,t')$, was introduced by Gordon and Finden~\cite{Gor80,FG85} in order to measure the compatibility of the outputs of different classifications methods in phylogeny. 
It is also a generalization of the well studied problem of the longest increasing subsequence of a permutation, and the two problems share a lot of similarities (as noted e.g. in~\cite{aldous_largest_2022}).
\begin{figure}
	\begin{tabular*}{15cm}{ccc}
		\subfloat[A tree $t$ and its induced subtree $t|_I$\label{subfig:a}]{\includegraphics[page=1]{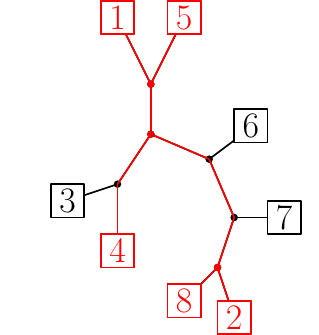}}&
		\subfloat[The tree $t|_I=t'|_I$ \label{subfig:b}]{\includegraphics[page=2]{MAST_example}} &
		\subfloat[A tree $t'$ and its induced subtree $t'|_I$\label{subfig:a}\label{subfig:c}]{\includegraphics[page=3]{MAST_example}}
	\end{tabular*}
	\caption{Two labelled binary trees $t$ an $t'$  and their largest common subtree, induced by the set $I=\{1,2,4,5,8\}$.}\label{fig:example mast}
\end{figure}
Since then, it has been studied from algorithmic, extremal and probabilistic points of view. 
In particular, the quantity $\MAST(t,t')$ can be computed in polynomial time in $n$~\cite{SW93}. 
On the extremal side, the minimal possible values of $\MAST(t,t')$ over all pairs $(t,t')$ of leaf-labelled binary trees of size $n$ is known to be of order $\log n$ (the upper bound was proved in~\cite{KKM92} and the lower bound in~\cite{Mark20}).

\paragraph{Maximum agreement subtree of random trees.}
Another natural question is to understand the \emph{typical} order of magnitude of the maximum agreement subtree, that is, the random variable $\MAST(T_n, T'_n)$, where $T_n$ and $T'_n$ are random trees of size $n$. 
The most natural model is the one where $T_n$ and $T'_n$ are independent and picked uniformly in the set of labelled binary trees of size $n$. 
This model was first investigated by Bryant, McKenzie and Steel~\cite{bryant_size_2003}, who proved by a first moment computation that $\MAST(T_n, T'_n)$ is $O(\sqrt{n})$ with high probability. 
They also provided numerical evidence that $\MAST(T_n, T'_n)$ should be of order $n^{\beta}$ for some $\beta$ close to $\frac{1}{2}$. 
On the other hand, a polynomial lower bound of order $n^{1/8}$ was obtained by Bernstein, Ho, Long, Steel, St. John and Sullivant in~\cite{bernstein_bounds_2015}. This lower bound was recently improved to $n^{\frac{\sqrt{3}-1}{2}}\approx n^{0.366}$ by Aldous~\cite{aldous_largest_2022} and to $n^{0.4464}$ by Khezeli~\cite{Khez22} in expectation. Finally, we also mention that $\sqrt{n}$ has been proved to be the right order of magnitude if the trees $T_n$ and $T'_n$ are conditioned to have the same shape~\cite{misra_bounds_2019}, and that the upper bound in $\sqrt{n}$ holds robustly for many random trees models arising from branching processes~\cite{Pit23}.

Our main contribution in this paper is to show that the upper bound $\sqrt{n}$ is actually not optimal in the independent model, which was conjectured by Aldous in~\cite{aldous_largest_2022}.

\begin{theorem}\label{thm_MAST_upper}
	For all $n \geq 3$, let $T_n$ and $T'_n$ be two independent uniform labelled binary trees of size $n$. There exists a constant $\eps_{\ref{thm_MAST_upper}}>0$ such that we have
	\[ \Pp{\MAST (T_n,T'_n) < n^{1/2-\eps_{\ref{thm_MAST_upper}}}} \xrightarrow[n \to +\infty]{} 1.\]
\end{theorem}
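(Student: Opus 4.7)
The plan is to argue by contradiction using the H\"older non-existence result announced in the abstract: almost surely, there is no $(1-\eps')$-H\"older homeomorphism between two independent Brownian CRTs. Suppose toward contradiction that $\MAST(T_n,T'_n)\ge n^{1/2-\eps}$ along a subsequence with probability bounded away from $0$. I would then manufacture, out of such a large common subtree, a H\"older homeomorphism between two independent copies $\Tt$ and $\Tt'$ of the Brownian CRT with exponent as close to $1$ as wanted (by taking $\eps$ small), contradicting the H\"older theorem. The constant $\eps_{\ref{thm_MAST_upper}}$ is then determined by the smallest H\"older exponent that the companion result is able to rule out.

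The first step is to pass from the discrete MAST to a continuum matching. On the event $\{\MAST(T_n,T'_n)\ge k_n\}$ with $k_n\asymp n^{1/2-\eps}$, one has a random subset $I_n$ of $k_n$ labels such that $T_n|_{I_n}$ and $T'_n|_{I_n}$ are isomorphic as leaf-labelled binary trees. Using Aldous' convergence of the uniform binary tree to the Brownian CRT (after rescaling edge lengths by $n^{-1/2}$), I would extract a subsequential joint limit in which $(T_n,T'_n)$ becomes a pair of independent CRTs $(\Tt,\Tt')$ equipped with the mass measure on leaves, and the common shape on $I_n$ survives in the limit as a shape-matched reduced subtree on a sample of leaves that forms, roughly, a $k_n^{-1/2}$-net in each CRT.

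The second step is to promote this matched reduced subtree into a H\"older homeomorphism $\phi\colon\Tt\to\Tt'$. A natural first candidate sends each matched leaf or internal vertex on the $\Tt$-side to its partner on the $\Tt'$-side and interpolates affinely along matched branches; since matched branches are independent across the two CRTs but identically distributed, this defines on the matched skeleton a map whose distortion is polynomially controlled with high probability. To extend $\phi$ to the entire CRT and to control it at arbitrarily fine scales, I would iterate the construction inside each pair of sub-CRTs dangling off a matched edge, exploiting Aldous' recursive self-similar decomposition of the CRT to produce matched subtrees at every level. Concatenating across scales yields a bijection $\phi\colon\Tt\to\Tt'$, and aggregating the per-scale distortion estimates yields a H\"older exponent $\alpha(\eps)$ that tends to $1$ as $\eps\to 0$.

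The main obstacle lies in this iterative extension. A single common subtree records agreement at one scale only, whereas a H\"older homeomorphism demands matched structure at every scale simultaneously; the crux is therefore to show that one large MAST event forces, through the self-similarity of the CRT and sharp tail estimates on reduced-tree branch lengths, a whole cascade of smaller common subtrees inside the nested sub-CRTs, and to control these cascading matchings jointly so that the aggregated distortion stays polynomial. It is also here that the trade-off between $\eps_{\ref{thm_MAST_upper}}$ and the forbidden H\"older exponent is fixed: the less room one allows in $\eps$, the finer the resolution one starts with, and the closer to $1$ the final H\"older exponent can be pushed.
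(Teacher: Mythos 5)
Your proposal has a genuine gap, and it inverts the logical structure of the argument. You propose to deduce the MAST bound from the H\"older non-existence theorem by showing that a large common subtree yields a $(1-\eps')$-H\"older homeomorphism $\Tt\to\Tt'$ with $\eps'\to 0$. The fatal step is the one you yourself flag as "the main obstacle'': a single MAST event records agreement at \emph{one} scale only, and there is no mechanism by which it "forces a whole cascade of smaller common subtrees inside the nested sub-CRTs''. The sub-CRTs dangling off the matched skeleton are completely unconstrained by the event $\{T_n|_{I_n}=T'_n|_{I_n}\}$; a homeomorphism extending the skeleton matching into them always exists for purely topological reasons (any two compact binary real trees with dense branching are homeomorphic), but it carries no metric control whatsoever, and no self-similarity argument can conjure matched substructure that the MAST event never produced. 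A further problem is quantitative: even on the matched skeleton, corresponding branches of the two reduced trees have independent lengths, so branches that are atypically short on one side already ruin any H\"older exponent close to $1$. (A minor additional issue: $I_n$ is a maximizing, hence highly non-uniform, label set, so the limit of the reduced trees it spans is not controlled by the standard convergence of reduced trees over uniform samples.)

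The paper's actual route does not pass through the H\"older theorem at all; both Theorem~\ref{thm_MAST_upper} and Theorem~\ref{thm_holder_homeo} are deduced from Proposition~\ref{prop_homeo_mass}, which says that \emph{any} homeomorphism $\Psi:\Tt\to\Tt'$ (no regularity assumed) must send all but an $e^{-\xi k}$-fraction of the depth-$k$ regions $\Rr[\ii]$ of Aldous' recursive decomposition to images of mass at most $e^{-\eta k}|\Rr[\ii]|$. Given a common subtree on a label set $S$, one builds \emph{some} homeomorphism $\Psi$ matching the marked leaves (this is the easy, topological direction), then counts: the labels of $S$ falling in $\Rr[\ii]$ form a common subtree of the corresponding discrete regions $R[\ii]\subset T_n$ and $R'[\ii]\subset T'_n$, so by a refined square-root bound (Lemma~\ref{lem:bound mast of two regions}) their number is $O\bigl(n^{\eps}\vee\sqrt{\#R[\ii]\,\#R'[\ii]/n}\bigr)\lesssim n^{1/2+\eps}\sqrt{|\Rr[\ii]|\,|\Psi(\Rr[\ii])|}$, and Proposition~\ref{prop_homeo_mass} makes $\sum_{\ii}\sqrt{|\Rr[\ii]|\,|\Psi(\Rr[\ii])|}$ exponentially small in $k\asymp\log n$. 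The mass-distortion statement for arbitrary homeomorphisms is the key idea your proposal is missing, and it is what replaces the unobtainable "cascade of matchings''.
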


More explicitly, we find that we can take $\eps_{\ref{thm_MAST_upper}}=10^{-338}$ (see Section~\ref{subsec:explicit constants} for a discussion on explicit constants). We have not tried to optimize the constants and this value should be easy to improve, but we do not think that our strategy of proof can give a "reasonable" lower bound (like e.g. $\eps_{\ref{thm_MAST_upper}}=10^{-6}$). We also mention that our arguments are sufficient to prove that the probability that $\MAST(T_n,T'_n)$ exceeds $n^{1/2-\eps_{\ref{thm_MAST_upper}}}$ is $O(n^{-a})$ for some $a>0$, and that $\Ec{\MAST(T_n,T'_n)} \leq n^{1/2-\eps}$ for some $\eps>0$ (see Section~\ref{subsec:discussion} for a quick discussion).

\paragraph{Comparison with the Brownian tree.}
As recalled before, it is proved in~\cite{misra_bounds_2019} that the MAST of two trees of the same shape is typically of order $\sqrt{n}$. 
Therefore, our strategy relies on the fact that two independent large random trees have "different shapes" at every scale.
To formalize this, we make heavy use of the continuous scaling limit of $T_n$, which exhibits nice scale invariance properties, and on which more explicit computations can be performed.

More precisely, we denote by $\Tt$ the \emph{Brownian tree}, which is the scaling limit of $T_n$, seen as a measured metric space, where distances have been normalized by $\sqrt{n}$ and masses by $n$. 
This compact, continuous random tree with fractal dimension 2 was introduced in~\cite{Ald91} and can be built in a natural way from a normalized Brownian excursion (see Section~\ref{subsec:BCRT} for complete definitions). It also has the important property that its branching points all have degree $3$. We highlight that comparisons between the discrete trees $T_n$ and the continuous object $\Tt$ already play an important role in the proofs of the lower bounds of~\cite{aldous_largest_2022} and~\cite{Khez22}.

\paragraph{H\"older homeomorphisms of Brownian tree.}
Since proving Theorem~\ref{thm_MAST_upper} requires to compare the shapes of two independent copies of $\Tt$, we obtain along the way the following result of independent interest.

\begin{theorem}\label{thm_holder_homeo}
	Let $\Tt$ and $\Tt'$ be two independent copies of the Brownian tree. There exists a constant $\eps_{\ref{thm_holder_homeo}}>0$ such that almost surely, there is no $(1-\eps_{\ref{thm_holder_homeo}})$-H\"older homeomorphism from $\Tt$ to $\Tt'$.
\end{theorem}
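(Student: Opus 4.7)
The plan is to argue by contradiction, exploiting the self-similarity of $\Tt$ through Aldous's recursive decomposition. Assume that with positive probability there exists a $(1-\eps)$-Hölder homeomorphism $f:\Tt\to\Tt'$, and write $C^*(\Tt,\Tt')\in[0,\infty]$ for the infimum of the Hölder constant over such maps; the goal is to show $C^*(\Tt,\Tt')=\infty$ almost surely when $\eps$ is small enough.

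First, sample three uniform leaves in $\Tt$ to decompose it at their common branching point into three subtrees $\Tt_1,\Tt_2,\Tt_3$ with masses $(M_1,M_2,M_3)\sim\Dir(1/2,1/2,1/2)$. After rescaling distances by $M_i^{-1/2}$, each $\tilde\Tt_i$ is a standard Brownian tree, and the three $\tilde\Tt_i$ are independent and independent of the masses. Performing the identical decomposition on $\Tt'$ produces $\tilde\Tt'_j$ with masses $M'_j$. A Hölder homeomorphism $f$ with constant $C$ restricts to sub-maps $f|_{\Tt_i}:\Tt_i\to\Tt'_{\sigma(i)}$ for some permutation $\sigma\in S_3$, and a direct rescaling computation shows that the Hölder constant of the rescaled sub-map $\tilde f_i$ is $\tilde C_i=C\cdot M_i^{(1-\eps)/2}/(M'_{\sigma(i)})^{1/2}$. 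A key observation is that $\sum_iM_i^{1-\eps}>1=\sum_iM'_{\sigma(i)}$ almost surely whenever $\eps>0$, so no permutation $\sigma$ can satisfy $M'_{\sigma(i)}\geq M_i^{1-\eps}$ for every $i$; equivalently, at least one $\tilde C_i$ strictly exceeds $C$, and this forced inflation at each level is the driver of the contradiction.

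Setting $q(K):=\Pc{C^*(\Tt,\Tt')\leq K}$ and conditioning on $(M,M')$ while exploiting the independence of the three sub-pairs, one obtains a functional inequality
\[
q(K)\leq\sum_{\sigma\in S_3}\Ec{\prod_{i=1}^3q\!\left(K\cdot M_i^{(1-\eps)/2}/(M'_{\sigma(i)})^{1/2}\right)}.
\]
Iterating $n$ times produces a sum over matchings through $n$ levels of the decomposition, involving $3^n$ copies of $q$ evaluated at $K$ times products of $n$ iid rescaling factors along root-to-leaf paths. A short computation with the Beta$(1/2,1)$ marginals shows that $\Ec{\log(M_i^{(1-\eps)/2}/(M'_j)^{1/2})}$ is positive of order $\eps$, so along a typical path the cumulative rescaling $\prod_kM_{i_k}^{(1-\eps)/2}/(M'_{j_k})^{1/2}$ grows exponentially in $n$, eventually forcing each factor $q(\cdot)$ toward its asymptotic value $q(\infty):=\Pc{C^*<\infty}$.

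The main obstacle is to convert this typical growth into a quantitative bound showing $q\equiv0$. The permutation sum over $n$ levels contributes a combinatorial factor of order $6^{(3^n-1)/2}$, and the heavy-tailed $(M')^{-1/2}$ factors (with infinite expectation under Beta$(1/2,1)$) prevent closing the argument with elementary moment estimates. Overcoming this requires fine large-deviation control on the multiplicative cascade $\prod_kM_{i_k}^{(1-\eps)/2}/(M'_{j_k})^{1/2}$ together with a bootstrap exploiting the independence of distinct subtrees; once $\eps$ is taken small enough that the positive drift of the cascade dominates the combinatorial cost, the iterated inequality strictly contracts and forces $q\equiv0$, thereby establishing the almost sure non-existence asserted in Theorem~\ref{thm_holder_homeo}.
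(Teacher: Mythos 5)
Your proposal takes a genuinely different route from the paper, but it contains a fatal gap at its very first step. You decompose $\Tt$ at the branching point $b$ determined by three uniform leaves of $\Tt$, and $\Tt'$ at the branching point $b'$ determined by three \emph{independent} uniform leaves of $\Tt'$, and then assert that a homeomorphism $f$ "restricts to sub-maps $f|_{\Tt_i}:\Tt_i\to\Tt'_{\sigma(i)}$". This is false: $f$ does map $\Tt_1,\Tt_2,\Tt_3$ onto the three components of $\Tt'\setminus\{f(b)\}$, but $f(b)\neq b'$ almost surely, so these components are not the pieces $\Tt'_j$ of the independent decomposition of $\Tt'$, and their masses are not the Dirichlet vector $(M'_1,M'_2,M'_3)$. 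The masses of the image pieces depend on which branching point of $\Tt'$ the map $f$ chooses as $f(b)$, and since $\Tt'$ has infinitely many branching points (with sizes accumulating at $0$), the adversary $f$ has enormous freedom here. Consequently the functional inequality for $q(K)$, which conditions on $(M,M')$ and sums only over the six permutations, is not justified; a correct version would require a supremum or union bound over all admissible image branching points of $\Tt'$. Controlling that union bound is precisely the hard core of the paper's argument (the counting of "candidate sequences" in Lemma~\ref{lem_counting_candidates} and the anti-concentration estimate for $\mathrm{Beta}(\frac12,\frac12)$ ratios in Proposition~\ref{prop_existence_mismatches}), and your "key observation" that $\sum_i M_i^{1-\eps}>1=\sum_i M'_{\sigma(i)}$ forces an inflation of the Hölder constant simply does not survive once the image masses are no longer constrained to sum to the mass of $\Tt'_{\sigma(i)}$'s parent with the prescribed Dirichlet law.

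Even setting that aside, you concede in your final paragraph that you cannot close the recursion: the combinatorial factor $6^{(3^n-1)/2}$, the infinite expectation of $(M')^{-1/2}$ under $\mathrm{Beta}(\frac12,1)$, and the fact that $q$ is nondecreasing (so that blowing up the argument of $q$ along a path only pushes its value up towards $q(\infty)$, the very quantity you want to show is $0$) are all acknowledged but not overcome; "once $\eps$ is taken small enough \dots the iterated inequality strictly contracts" is an assertion, not an argument. For comparison, the paper avoids any fixed-point recursion on Hölder constants: it first proves the purely measure-theoretic Proposition~\ref{prop_homeo_mass} (most regions $\Rr[\ii]$ of the depth-$k$ decomposition satisfy $|\Psi(\Rr[\ii])|\le e^{-\eta k}|\Rr[\ii]|$), and then converts mass collapse into metric collapse via the two-sided diameter--mass comparisons $\diam(\Rr[\ii])\gtrsim|\Rr[\ii]|^{1/(2-\eps)}$ and $\diam(\Psi(\Rr[\ii]))\le|\Psi(\Rr[\ii])|^{1/2-\eps}$, the latter using only that $\Psi(\Rr[\ii])$ is the projection of at most three subintervals of $[0,1]$ under the Hölder-continuous excursion coding $\Tt'$. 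If you want to pursue your self-similar scheme, you would at minimum need to import something like the paper's candidate-sequence union bound to handle the freedom in $f(b)$ at every level.
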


Just like in Theorem~\ref{thm_MAST_upper}, we find that we can take $\eps_{\ref{thm_holder_homeo}}=10^{-338}$, which we did not try to optimize.
Although none of Theorems~\ref{thm_MAST_upper} and~\ref{thm_holder_homeo} easily implies the other, they are closely related to each other. 
Indeed, as can be seen on Figure~\ref{fig:example mast}, a common subtree of two trees $T_n$ and $T'_n$ gives a "correspondence" between a part of $T_n$ and a part of $T'_n$, which can be extended to a homeomorphism in the continuous limit.
This is not a completely new idea, as the arguments of~\cite{aldous_largest_2022} (and the improvements done in~\cite{Khez22}) can already be interpreted as a proof of the existence of a homeomorphism from $\Tt$ to $\Tt'$ with a certain H\"older exponent. 
As we check in Theorem~\ref{thm:existence_holder_homeomorphism} in the Appendix below, the actual H\"older exponent given by~\cite{aldous_largest_2022} turns out to be $5-2\sqrt{6}\approx 0.1010$.
More generally, statements similar to Theorem~\ref{thm_holder_homeo} on very different objects appear under the name of \emph{H\"older equivalence} in the geometry literature. 
In geometry, this problem is often of the following form: given a metric space $X$ that is homeomorphic to $\mathbb{R}^n$, what is the optimal H\"older exponent of a homeomorphism from $\mathbb{R}^n$ to $X$? 
An immediate upper bound is $\frac{n}{\dim_H(X)}$, where $\dim_H(X)$ is the Hausdorff dimension of $X$. 
We refer to~\cite{Gromov96} for improved upper bounds in specific contexts such as sub-Riemannian or contact manifolds. 
However, we are in a very different setting here, as the Brownian trees involved are not manifolds, and so our arguments do not share any commonalities. 
Another difference between our setting and the one studied by Gromov is that we prove that the H\"older exponent cannot be arbitrarily close to $1$ in a context where both sides of the homeomorphism have the same Hausdorff dimension.

We also note that Theorem~\ref{thm_holder_homeo} becomes quite easy if "$(1-\eps)$-H\"older" is replaced by "Lipschitz"\footnote{For example, take a large branching point $b_1$ of $\Tt$, and consider an "exceptional scale" $\delta$ at which $b_1$ is unusually close to another branching point $b_2$.
Then $\Psi(b_1)$ would have to be a large branching point of $\Tt'$, and $\Psi(b_2)$ would have to be a branching point of $\Tt'$ (of scale $\approx \delta$) very close to $\Psi(b_1)$, which is unlikely to exist.}
Finally, we remark that our results have a similar flavor to those proved in~\cite[Theorem~1.2, Theorem~1.7]{BBDFGMP22} for a quite different model (largest increasing subsequence of a random separable permutation). More precisely, the proofs in~\cite{BBDFGMP22} consist of showing that a random tree cannot contain a large subtree satisfying some properties, which improves on the first moment upper-bound and is achieved by comparison with continuous objects.
The very recent preprint \cite{borga_power_2023} improves their result (and also provides some lower-bound) using some careful analysis on the Brownian tree and its associated fragmentation process.

\paragraph{Recursive decomposition of the Brownian tree.}
In order to highlight more precisely what Theorem~\ref{thm_MAST_upper} and Theorem~\ref{thm_holder_homeo} have in common, we introduce an important tool in our proofs, which already crucially appears in~\cite{aldous_largest_2022}. 
This is a randomized recursive decomposition of the Brownian tree $\Tt$, which was introduced by Aldous~\cite{aldous_recursive_1994}. 
The decomposition consists in picking $3$ random uniform points $X_1, X_2, X_3$ in $\Tt$, blowing up $\Tt$ into three pieces at the unique branching point that separates $X_1, X_2, X_3$, and iterating the decomposition in each of the three pieces (see Section~\ref{subsec:aldous recursive decomposition} for complete definitions). 
After $k$ steps, we obtain a (randomized) partition of $\Tt$ into $3^k$ regions, indexed by a set $\mathbb{T}_3^k$. 
We denote those regions by $\left( \Rr[\ii] \right)_{\ii \in \mathbb{T}_3^k}$. 
This decomposition enjoys very nice independence properties that we will heavily rely on.

We can now state the key result that we will use to prove both Theorem~\ref{thm_MAST_upper} and Theorem~\ref{thm_holder_homeo}. 
It roughly states that a homeomorphism between two independent realizations $\Tt, \Tt'$ of the Brownian tree cannot be "almost measure-preserving", in the sense that it has to send "most" regions of $\Tt$ to regions of $\Tt'$ with a much smaller mass. We will denote by $|A|$ the measure of a subset $A$ of a Brownian tree.
\begin{proposition}\label{prop_homeo_mass}
	Let $\Tt$ and $\left( \Rr[\ii] \right)_{\ii \in \mathbb{T}_3^k}$ be a Brownian tree and its recursive decomposition, and let $\Tt'$ be an independent copy of $\Tt$. There exist constants $\xi, \eta>0$ such that almost surely the following holds for $k$ large enough:
	For any homeomorphism $\Psi : \Tt \to \Tt'$, 
	if we define the set $U^{k,\Psi} \subset \mathbb{T}_3^k$ of indices as
	\begin{align*}
		U^{k,\Psi}:= \enstq{\ii \in \mathbb{T}_3^k}{\left| \Psi \left( \Rr[\ii] \right) \right| > e^{-\eta k} \left| \Rr[\ii] \right|},
	\end{align*}
	then the subset 
	$
	E^{k,\Psi}:= \bigcup_{\ii \in U^{k,\Psi}} \Rr[\ii]$ of $\Tt
	$
	has measure at most $e^{-\xi k}$.
\end{proposition}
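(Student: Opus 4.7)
My plan is to reduce the statement to an exponential-decay bound for an auxiliary ``partition function'' and then establish it via a one-step contraction along the levels of Aldous' decomposition, exploiting the independence of $\Tt$ and $\Tt'$. Fix $\alpha\in(0,1)$ (to be tuned later) and, for each homeomorphism $\Psi$, set
\begin{equation*}
Z_k(\alpha,\Psi) \;:=\; \sum_{\ii\in\mathbb{T}_3^k} |\Rr[\ii]|^{1-\alpha}\,|\Psi(\Rr[\ii])|^\alpha.
\end{equation*}
Since $\sum_\ii |\Rr[\ii]|=\sum_\ii |\Psi(\Rr[\ii])|=1$, Hölder gives $Z_k(\alpha,\Psi)\leq 1$. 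Writing $r_\ii:=|\Psi(\Rr[\ii])|/|\Rr[\ii]|$, the Markov-type bound
\begin{equation*}
|E^{k,\Psi}| \;\leq\; \sum_\ii |\Rr[\ii]|\cdot \bigl(r_\ii\, e^{\eta k}\bigr)^\alpha \;=\; e^{\alpha\eta k}\,Z_k(\alpha,\Psi)
\end{equation*}
then reduces the proposition to showing $Z_k(\alpha,\Psi)\leq e^{-2\alpha\eta k}$ almost surely, uniformly in $\Psi$, for $k$ large.

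The main step is a one-step contraction. When one refines the decomposition from level $k$ to level $k+1$, the three children of $\ii$ contribute
\begin{equation*}
|\Rr[\ii]|^{1-\alpha}\,|\Psi(\Rr[\ii])|^\alpha \cdot S_\ii,\qquad S_\ii \;:=\; \sum_{j=1}^3 (D_j^\ii)^{1-\alpha}(\tilde D_j^\ii)^\alpha,
\end{equation*}
where $(D_j^\ii)\sim \Dir(1/2,1/2,1/2)$ are the Aldous split ratios inside $\Rr[\ii]$, while $(\tilde D_j^\ii)$ are the normalised masses in $\Tt'$ of the three components of $\Psi(\Rr[\ii])\setminus\{\Psi(b_\ii)\}$, with $b_\ii$ the new branching point. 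Hölder again gives $S_\ii\leq 1$, with equality iff $(D_j^\ii)=(\tilde D_j^\ii)$. The goal is to prove $\Ecsq{S_\ii}{\mathcal{F}_k}\leq 1-c$ for some $c=c(\alpha)>0$, uniformly in the conditioning $\mathcal{F}_k$ (encoding $\Tt,\Tt',\Psi$, and the decomposition up to level $k$) and in $\Psi$; iterating over the $3^k$ regions gives $\Ecsq{Z_{k+1}}{\mathcal{F}_k}\leq(1-c)\,Z_k$, whence $\Ec{Z_k}\leq(1-c)^k$, and a level-wise concentration argument (Azuma-- or Freedman--type) upgrades this to $Z_k\leq e^{-\xi k}$ almost surely for $k$ large.

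The crucial input is this strict one-step inequality, and its proof rests on the independence of $\Tt$ and $\Tt'$: conditionally on $\mathcal{F}_k$, the adversarial map $b\mapsto (\tilde D_j^\ii(b))$ on $\Rr[\ii]$ is \emph{deterministic}, whereas the new branchpoint $b_\ii$ is a fresh random variable whose law is determined purely by $\Rr[\ii]\subset \Tt$, with $(D_j^\ii)$ marginally $\Dir(1/2,1/2,1/2)$ and conditionally independent of $\tilde D^\ii(\cdot)$. Equality $S_\ii=1$ forces $(D_j^\ii(b))=(\tilde D_j^\ii(b))$ on a set of full conditional probability, and an anti-concentration estimate against the (strictly positive, smooth) Dirichlet density, applied to the simplex-valued function $b\mapsto (D_j^\ii(b))-(\tilde D_j^\ii(b))$, then yields $\Ecsq{S_\ii}{\mathcal{F}_k}\leq 1-c$ with $c$ bounded away from $0$ for all $\alpha$ in a suitable interval $(0,\alpha_0)$.

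The final subtlety is that the bound on $Z_k(\alpha,\Psi)$ must hold uniformly over all $\Psi$ simultaneously. Since $Z_k$ depends on $\Psi$ only through the $O(3^k)$ images $(\Psi(b_\ii))$ of the branching points of the decomposition, one can discretise by projecting onto a $\delta$-net of $\Tt'$ and take a union bound over $\Psi$-types. I expect the main obstacle to be balancing this union bound against the one-step contraction, since the entropy of the net is $\exp(O(3^k \log(1/\delta)))$, which is much larger than the exponential concentration $(1-c)^k$ of the mean. Overcoming it will require either a finer analysis of how $\Psi$-types influence $Z_k$ (e.g.\ a level-by-level chaining argument showing that only polynomially many ``critical'' types matter at each level) or a stronger, tree-structural concentration inequality replacing the crude Freedman bound; this is the technical core of the proof.
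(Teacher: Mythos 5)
Your reduction to the partition function $Z_k(\alpha,\Psi)=\sum_{\ii\in\mathbb{T}_3^k}|\Rr[\ii]|^{1-\alpha}|\Psi(\Rr[\ii])|^{\alpha}$ and the Markov-type bound $|E^{k,\Psi}|\le e^{\alpha\eta k}Z_k(\alpha,\Psi)$ are correct, and for $\alpha=\tfrac12$ this is essentially the quantity the paper controls in its martingale step (there $Z_k(\tfrac12,\Psi)$ is the expectation of $\sqrt{M_k}$ over a uniform point $X$, with $M_k$ the mass-ratio martingale along the path to $X$). The genuine gap is exactly the one you flag in your last paragraph, and it is not a peripheral technicality but the heart of the proposition: the one-step contraction $\Ecsq{S_\ii}{\mathcal F_k}\le 1-c$ ``uniformly in $\Psi$'' is not a meaningful conditional statement, because $\Psi$ is chosen by the adversary after seeing the entire realization of both trees and of the decomposition at every level; in particular the adversary knows where the new branching point $b[\ii]$ lands and can try to arrange $\tilde D_j^{\ii}(b[\ii])=D_j^{\ii}(b[\ii])$ at the realized point, so the ``fresh randomness of $b_\ii$ against a deterministic map'' picture breaks down. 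Your proposed repair --- a net over $\Psi$-types of entropy $\exp(O(3^k\log(1/\delta)))$ --- indeed cannot be beaten by a contraction of order $(1-c)^k$, and you leave this unresolved.

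The missing idea is a path-wise localisation of the dependence on $\Psi$. One does not take a union bound over global $\Psi$-types; instead one fixes a single $\ii\in\mathbb{T}_3^k$ (only $3^k$ choices) and a set of $\ell\asymp k$ ``good'' scales along the path to $\ii$, and observes that whether the split ratios match within $\delta$ at those scales depends on $\Psi$ only through the images $(\Psi(b[\ii_{j_h}]))_{h\le \ell}$ of the $\ell$ branching points on that path. A deterministic counting lemma (a region of mass $m_0$ contains at most $m_0/m$ branching points of relative size $\ge m$, applied to the nested regions $\Psi(\Rr[\ii_{j_h}])$, whose masses are controlled precisely because matching at good scales forces the image branching points to have relative size $\ge\frac{\alpha}{2}$) shows there are at most $K^{\ell}$ admissible image sequences. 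For each fixed image sequence the ratios in $\Tt'$ are deterministic, and by independence of $\Tt$ and $\Tt'$ together with the i.i.d.\ $\Dir(\frac12,\frac12,\frac12)$ splits across scales, the probability that the $\Tt$-ratios match them within $\delta$ at all $\ell$ scales is at most $(\cst\cdot\delta\,\alpha^{-3/2})^{\ell-2}$, which beats $6^kK^{\ell}$ for $\delta$ small. This yields ``many $\delta$-mismatches or $|\Psi(\Rr[\ii])|\le e^{-k}|\Rr[\ii]|$'' simultaneously for every $\ii$ and every $\Psi$, after which your contraction heuristic is made rigorous as a supermartingale estimate in which the analogue of your factor $1-c$ is only collected at mismatch scales, not at every scale. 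Without this localisation and the counting of candidate branching-point sequences, the anti-concentration step in your third paragraph cannot be justified against an adversarial $\Psi$.
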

Theorem~\ref{thm_holder_homeo} will follow almost immediately from this result. 
On the other hand, in order to deduce Theorem~\ref{thm_MAST_upper} from Proposition~\ref{prop_homeo_mass}, we will rely on the nice coupling existing between the discrete random tree $T_n$ and the continuous one $\Tt$. 
We can then argue that a common subtree between $T_n$ and $T'_n$ can be extended to a homeomorphism $\Psi$ from $\Tt$ to $\Tt'$. 
Proposition~\ref{prop_homeo_mass} guarantees that for most of the regions $\Rr[\ii]$, either $\Rr[\ii]$ or $\Psi(\Rr[\ii])$ is very small, so only few labels can appear in both $\Rr[\ii]$ and in $\Psi(\Rr[\ii])$ when $T_n$ is coupled to $\Tt$ and $T'_n$ to $\Tt'$. We will conclude by using the classic square root upper bound for each of those regions (Lemma~\ref{lem:bound mast of two regions}).

\paragraph{Ideas of the proof of Proposition~\ref{prop_homeo_mass}.}
Finally, let us mention some of the ideas behind the proof of Proposition~\ref{prop_homeo_mass}. The proof roughly consists in showing that a certain multiscale exploration of the tree $\Tt$ has many "mismatches" with the analog exploration in $\Tt'$, which we believe is the main innovation of the present work.
Fix a typical point $x \in \Tt$, and imagine that we try to build a "good" homeomorphism from $\Tt$ to $\Tt'$. By looking at smaller and smaller regions of the recursive decomposition around the point $x$, we can encode the masses of a sequence of nested neighbourhoods of $x$ by a sequence $\left(f_j(x) \right)_{j \geq 0}$ of 
i.i.d.\ numbers, where $j$ represents decreasing scales.
We will argue that it is not possible to find $x' \in \Tt'$ such that $f_j(x')$ is very close to $f_j(x)$ for most of the scales $j$. 
This will imply that the ratio between the mass of a small region around $x$ and the mass of its image around $\Psi(x)$ cannot stay "stationary" as the scale of that region decreases to $0$.
By a "martingale-like" argument, we will conclude that this ratio must decay quickly, which will yield Proposition~\ref{prop_homeo_mass}. This is somewhat reminiscent of some ideas of~\cite{BBDFGMP22}, in the sense that "finding a large substructure is difficult because some positive proportion of the mass is lost at every scale".

\paragraph{Structure of the paper.}
In Section~\ref{sec:definitions and preliminaries}, we will introduce all the definitions of discrete and continuous objects that will be needed in the proofs, as well as some basic properties of the Brownian tree and of its recursive decomposition. 
Section~\ref{sec:proof of prop homeo mass} is the central part of the paper and is devoted to the proof of Proposition~\ref{prop_homeo_mass}, which represents most of the work. 
In Section~\ref{sec:proof of main results}, we conclude the proofs of the main theorems. 
In Section~\ref{sec:discussion of the results}, we discuss the quantitative values of $\eps_{\ref{thm_MAST_upper}}$ and $\eps_{\ref{thm_holder_homeo}}$ provided by our proof, as well as some open questions.
In Appendix~\ref{app:construction of homeomorphism} , we construct a Hölder homeomorphism between $\Tt$ and $\Tt'$.

\paragraph{Acknowledgments.} The authors are grateful to Omer Angel 
for many interesting discussions about various aspects of the model studied in this paper. They would also like to thank Nicolas Curien for useful discussions, and Valentin F\'eray for noticing the analogy with~\cite{BBDFGMP22, borga_power_2023}.
They also thank the anonymous referees for their constructive comments and remarks.
D.S.'s research has been conducted within the FP2M federation (CNRS FR 2036).

\tableofcontents

\section{Definitions and preliminaries}\label{sec:definitions and preliminaries}

\subsection{Labelled binary trees and maximum agreement subtree}\label{subsec_discrete_defns}

A \emph{finite tree} is a finite, connected graph with no cycles. 
A \emph{finite binary tree} $t$ is a finite tree where all the vertices have degree either $3$ or $1$. 
The vertices of degree $3$ will be called the \emph{nodes} of $t$, and the vertices of degree $1$ will be called its \emph{leaves}. 
A \emph{labelled binary tree of size $n \geq 2$} is a finite binary tree with exactly $n$ leaves (and therefore $n-2$ nodes), some of which are labelled by integers so that each label appears at most once. By convention, we also say that a single vertex with no edge is a binary tree of size $1$, and the empty tree is a binary tree of size $0$.

For $n \geq 1$, we denote by $\Bb_n$ the set of such trees where all $n$ leaves are labelled and the labels are $1, \dots, n$, up to isomorphism (these are sometimes called \emph{cladograms} in the literature).
We highlight that the trees that we consider are not rooted, and they are not plane trees (i.e. there is no clockwise ordering of the neighbours of a fixed vertex).

We recall that for all $n \geq 3$, we have
\[ \# \Bb_n = (2n-5)!!=1 \cdot 3 \cdot \dots \cdot (2n-7) \cdot (2n-5). \]
In all the paper, we will denote by $T_n$ a uniform random variable on $\Bb_n$. 
We will also denote by $T'_n$ an independent copy of $T_n$. If $t \in \Bb_n$ and $I$ is a subset of $\{1,\dots,n\}$, we will denote by $t|_I$ the \emph{subtree of $t$ induced by $I$}. 
More precisely, this is the labelled binary tree obtained from $t$ by keeping only the paths joining the labels of $I$ together, and by contracting the vertices of degree $2$ that may appear in the process (see Figure~\ref{fig:example mast}). Note that $t|_I$ does not belong to $\Bb_{\#I}$, unless $I=\{1,\dots,k\}$. If $t,t' \in \Bb_n$, we write
\[ \MAST(t,t')=\max \left\{ \#I \, \big|\, I \subset \{1,\dots,n\} \mbox{ such that } t|_I=t'|_I \right\},\]
where by $t|_I=t'|_I$ we mean that $t|_I$ and $t'|_I$ are isomorphic as leaf-labelled trees.

If $v_1, v_2$ are nodes of a finite binary tree, a \emph{region $r$ of $t$ delimited by $v_1, v_2$} is a connected component of the forest obtained by blowing up the nodes $v_1$ and $v_2$ into three leaves each. In particular, it is a labelled binary tree, where the leaves coming from $v_1$ or $v_2$ are unlabelled. 
We write $\# r$ for the number of original leaves of $t$ that belong to the region $r$ and call this quantity the \emph{size} of $r$.
If $t,t'$ are two labelled binary trees and if $r \subset t$ and $r' \subset t'$ are two such regions, we may consider the quantity $\MAST(r,r')$, which is the size of the largest subset $I$ of $\{1,\dots,n\}$ such that all the elements of $I$ appear both in $r$ and in $r'$, and $r|_I=r'|_I$.

\subsection{The Brownian tree}\label{subsec:BCRT}

We start with the construction of the Brownian tree $\Tt$ introduced in~\cite{Ald91, Ald93}. Let $\mathbf{e}=(e_t)_{0 \leq t \leq 1}$ be a normalized Brownian excursion (that is, a Brownian motion conditioned to stay positive in $(0,1)$ and hit $0$ at time $1$). For $s,t \in [0,1]$, we write $s \sim_{\mathbf{e}} t$ if
\[ e_s=e_t=\min_{[s \wedge t, s \vee t]} e_u,\]
where $s \wedge t$ and $s \vee t$ stand respectively for $\min(s,t)$ and $\max(s,t)$. 
We also write
\[ d_{\mathbf{e}}(s,t)=e_s+e_t-2 \min_{[s \wedge t, s \vee t]} e_u. \]
Then $d_{\mathbf{e}}$ is a pseudo-distance on $[0,1]$, i.e. it is symmetric and satisfies the triangle inequality. Moreover, we have $d_{\mathbf{e}}(s,t)=0$ if and only if $s \sim_{\mathbf{e}} t$. 
Then the quotient $\Tt=[0,1] / \sim_{\mathbf{e}}$ equipped with $d_{\mathbf{e}}$ is a random compact metric space, which we call the \emph{Brownian tree}. 
Moreover $\Tt$ carries a natural probability measure, which is the pushforward of the Lebesgue measure on $[0,1]$ under the canonical map from $[0,1]$ to $\Tt$. 
We will denote by $|\cdot |$ this probability measure on $\Tt$, which has full support and no atom. 
We recall that the metric space $\Tt$ is almost surely a real tree, i.e. for all $x,y$ in $\Tt$, there is a unique injective, continuous path from $x$ to $y$, and this path is a geodesic. 
For $m>0$, we say that a random measured metric space $(\mathcal{E},d,\mu)$ is a \emph{Brownian tree of mass $m$} if $(\mathcal{E}, m^{-1/2}d, m^{-1}\mu)$ has the law of $\Tt$. 
Note in particular that $\Tt$ is a Brownian tree with mass $1$.

Let $k \geq 1$ and let $(\mathcal{E},d,\mu)$ be a Brownian tree with mass $m$. Conditionally on $(\mathcal{E},d,\mu)$, let $X_1, \dots, X_k$ be $k$ i.i.d.\ points of $\mathcal{E}$, sampled according to (a normalized version of) $\mu$. Note that almost surely, the points $X_i$ are all leaves, i.e. $\mathcal{E} \backslash \{X_i\}$ is connected. We call $(\mathcal{E},d,\mu,X_1,\dots X_k)$ a \emph{$k$-pointed Brownian tree with mass $m$}.

In this work, decompositions of $\Tt$ into several regions will play a crucial role. For this purpose, we will call \emph{region of $\Tt$} the closure of a connected component of $\Tt \backslash F$, where $F$ is a subset of $\Tt$ of cardinal $0$, $1$ or $2$.

We recall that $\Tt$ is almost surely a \emph{binary} real tree, i.e. for all $x \in \Tt$, the space $\Tt \backslash \{x\}$ has at most three connected components. A point $b \in \Tt$ such that $\Tt \backslash \{ b \}$ has exactly three connected components will be called a \emph{branching point} of $\Tt$. Moreover, we define the \emph{size} of a branching point $b$ as the measure of the smallest of the three connected components of $\Tt \backslash \{b\}$, and denote this quantity by $|b|_{\Tt}$. 
Similarly, if $\Rr$ is a region of $\Tt$ and $b \in \Rr$ is such that $\Rr \backslash \{b\}$ has three connected components, the \emph{relative size} of $b$ in $\Rr$ is the measure of the smallest such connected component, and is denoted by $|b|_{\Rr}$.
\subsection{Dirichlet and Beta distributions}
We recall here the definition of the Dirichlet and Beta distributions. 
For $(a_1,\dots, a_d)\in \intervalleoo{0}{\infty}^d$ the \emph{Dirichlet distribution} $\Dir \left(a_1, \dots, a_d\right)$ is the probability measure on the simplex
\[ \left\{ (m_1, \dots, m_{d}) \ \bigg| \ \forall 1\leq i \leq d, \quad  m_i \geq 0 \quad  \text{and} \quad  \sum_{i=1}^{d} m_i=1 \right\}\]
with density
\[ \frac{\Gamma(a_1+\dots + a_d)}{\Gamma(a_1)\dots \Gamma(a_d)}  \prod_{i=1}^{d} m_i^{a_i-1} \]
with respect to the Lebesgue measure. 
For $(W_1,W_2)\sim \Dir\left(a,b\right)$, the first coordinate $W_1$ is said to have the $\mathrm{Beta}(a,b)$ distribution.

The Dirichlet distribution enjoys two properties that we use throughout the paper.
The first one can be derived from the so-called \emph{beta-gamma algebra} results developed in \cite{dufresne_algebraic_1998}; for the second one we refer to \cite[Lemma~17]{addario-berry_critical_2010}.
Suppose $(W_1,\dots,W_d) \sim \Dir \left(a_1, \dots, a_d\right)$. 
Then 
\begin{itemize}
	\item For any $1\leq i\leq d-1$, we have 
	\begin{align}\label{eq:dirichlet distrib factorisation}\left\lbrace
		\begin{aligned}
		W_1+\dots + W_i &\sim \mathrm{Beta}(a_1+\dots +a_i,a_{i+1}+\dots + a_d) \\ 
	    (W_1+\dots + W_i)^{-1}\cdot (W_1,\dots ,W_i) &\sim \Dir \left(a_1, \dots, a_i\right)\\
	    (W_{i+1}+\dots + W_d)^{-1}\cdot (W_{i+1},\dots ,W_d) &\sim \Dir \left(a_{i+1}, \dots, a_d\right)
    \end{aligned}
\right.
	\end{align}
and the three random variables appearing in the last display are independent.
	\item Let $I$ be such that $\Ppsq{I=i}{(W_1,\dots,W_d)}=W_i$. Then for any $i\in \{1,\dots,d\}$ we have
	\begin{align}\label{eq:dirichlet distrib size-biaising} 
		\Pp{I=i}=\frac{a_i}{a_1+\dots+a_d} \quad \text{and} \quad \mathrm{Law}\left((W_1,\dots,W_d) \ \vert \ I=i \right)= \Dir \left(a_1, \dots,a_i+1,\dots, a_d\right).
	\end{align} 
\end{itemize}
\subsection{Coupling between discrete and continuum random trees}\label{subsec:discrete_continuous_coupling}
\begin{figure}
	\begin{tabular*}{15cm}{cc}
		\subfloat[A realization of $T_5$ \label{subfig:a}]{\includegraphics[page=1,scale=1]{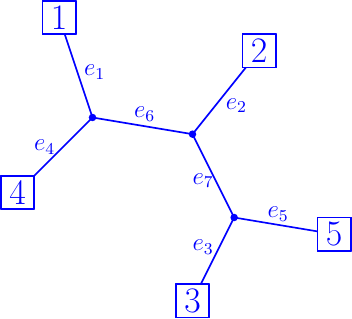}}&
		\subfloat[Patching together rescaled copies of independent Brownian trees along the structure of $T_5$ \label{subfig:b}]{\includegraphics[page=2,scale=0.8]{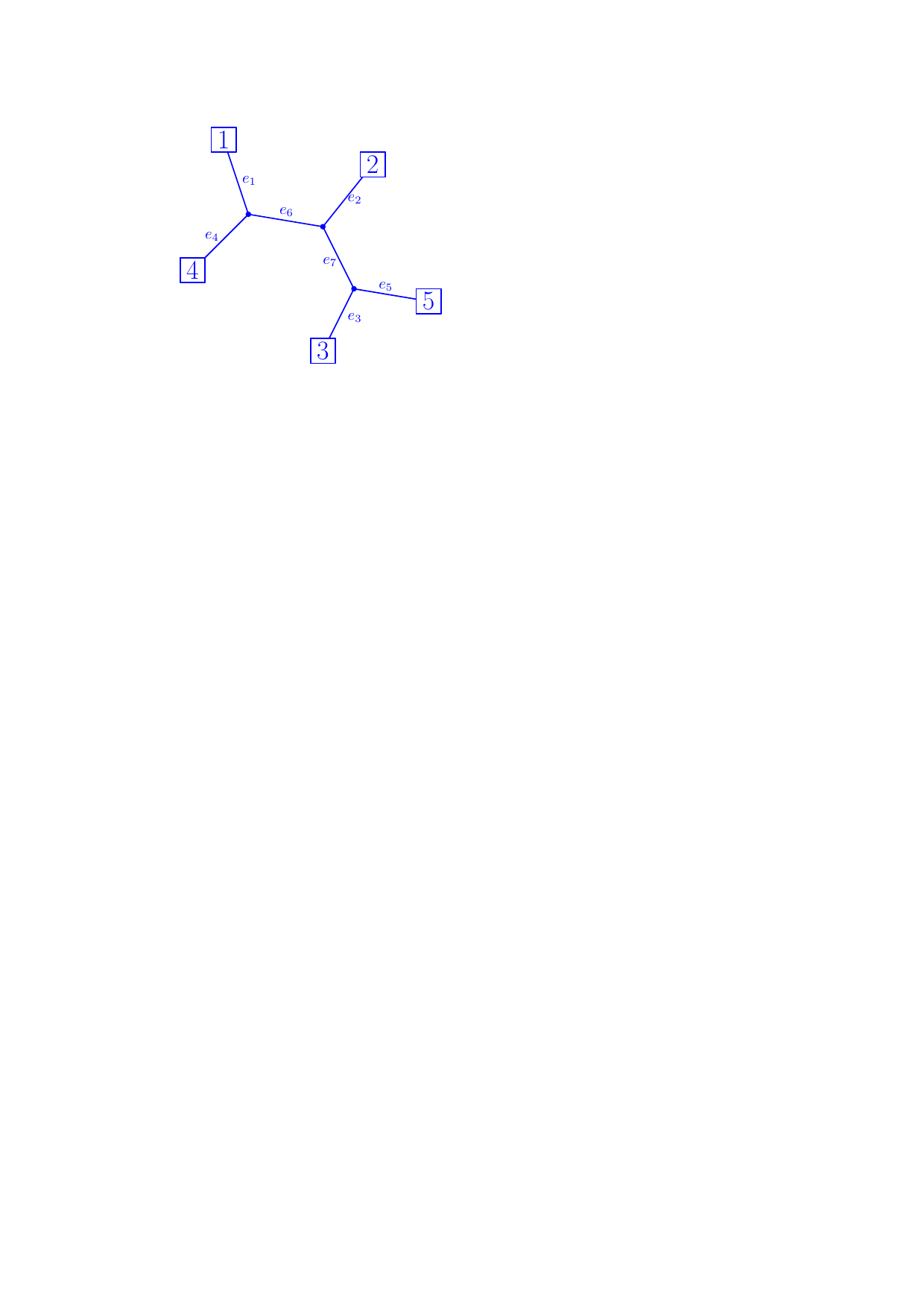}} \\
		&
		\subfloat[The obtained $5$-pointed tree\label{subfig:c}]{\includegraphics[page=3, scale=0.8]{decomposition_Tt_along_Tn}}
	\end{tabular*}
	\caption{Coupling between the $5$-pointed Brownian tree $(\widetilde{\Tt},X_1^1,\dots, X_5^1)$ and $T_5$. The combinatorial structure of the paths joining the distinguished points, shown in blue, is that of the discrete tree $T_5$ we started with.}\label{fig:decomposition}
\end{figure}
We recall the classical coupling between the discrete tree $T_n$ and the continuous tree $\Tt$. 
Let $n \geq 3$ and let $(e_i)_{1 \leq i \leq 2n-3}$ be an enumeration of the edges of $T_n$ such that for $1 \leq i \leq n$, the edge $e_i$ is the unique edge incident to the leaf labelled $i$. 

Let $(W_i)_{1 \leq i \leq 2n-3}$ be a random vector with distribution $\Dir \left( \frac{1}{2}, \dots, \frac{1}{2}\right)$, independent from $T_n$. 
Conditionally on $T_n$ and $(W_i)$, let $\left( \Tt_i, X_i^1, X_i^2 \right)_{1 \leq i \leq 2n-3}$ be independent bi-pointed Brownian trees with respective masses $W_i$. 
For all $i$, let $v_i^1$ and $v_i^2$ be the two endpoints of the edge $e_i$ of $T_n$, with the convention that if one of the endpoints is a leaf, then it is $v_i^1$. For $1 \leq i,i' \leq 2n-3$ and $j,j' \in \{1,2\}$, we write $X_i^j \sim X_{i'}^{j'}$ if $v_i^j=v_{i'}^{j'}$, and write
\[ \widetilde{\Tt} = \left( \bigcup_{i=1}^{2n-3} \Tt_i \right) / \sim.\]
As a metric space, this quotient is understood as the "metric gluing" of the $\Tt_i$'s in the sense of \cite{burago_course_2001}. 
The measure on $\Tt$ is straightforwardly obtained from those of the $\widetilde{\Tt}_i$, and its total mass is $\sum_{i=1}^{2n-3} W_i=1$.

The following result can be found for $n=3$ in \cite{aldous_recursive_1994}. 
Even though the corresponding result for $n>3$ is folklore, we were not able to find a statement in the literature that exactly matches the one that we use here. However, it can be seen as a consequence of the discrete result proven in \cite[Exercise~7.4.12 and Exercise~7.4.13]{pitman_combinatorial_2006} (the proof relies on the R\'emy algorithm~\cite{Rem85} to build uniform binary trees, and the Dirichlet distribution comes from a P\'olya urn argument). 
\begin{theorem}\label{thm_discrete_continuous_coupling}\cite{aldous_recursive_1994,pitman_combinatorial_2006}
	The $n$-pointed measured metric space $\left( \widetilde{\Tt}, X_1^1, \dots, X_n^1 \right)$ has the law of an $n$-pointed Brownian tree with mass $1$.
\end{theorem}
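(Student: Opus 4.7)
My plan is to proceed by induction on $n \geq 3$, starting from the base case $n = 3$, which is exactly Aldous's original result~\cite{aldous_recursive_1994}: if $\mathcal{S}$ is a Brownian tree of mass $1$ and $X_1, X_2, X_3$ are i.i.d.\ uniform points, then the unique branching point $b$ separating them splits $\mathcal{S}$ into three regions of masses $(W_1, W_2, W_3) \sim \Dir(1/2, 1/2, 1/2)$ and, conditionally on these masses, the three regions (pointed at the $X_i$ and at $b$) are independent bi-pointed Brownian trees. Since $\Bb_3$ contains only one element, this matches the statement of the theorem for $n = 3$.

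For the inductive step, assume the theorem holds for some $n \geq 3$ and sample $n+1$ i.i.d.\ uniform points $X_1, \dots, X_{n+1}$ in a Brownian tree $\mathcal{S}$ of mass $1$. Applying the induction hypothesis to $(\mathcal{S}, X_1, \dots, X_n)$ produces a uniform $T_n \in \Bb_n$, Dirichlet weights $(W_i)_{1 \leq i \leq 2n-3} \sim \Dir(1/2, \ldots, 1/2)$, and conditionally independent bi-pointed Brownian trees $(\Tt_i, X_i^1, X_i^2)$ of masses $W_i$. Since $X_{n+1}$ is uniform in $\mathcal{S}$ independently of the decomposition, it lands in the region $\Tt_I$ with conditional probability $W_I$. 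By the size-biasing identity~\eqref{eq:dirichlet distrib size-biaising}, conditionally on $\{I = i\}$ the mass vector has law $\Dir(1/2, \dots, 3/2, \dots, 1/2)$ with the $i$-th entry raised to $3/2$; moreover, the conditional law of $\Tt_i$ given its new mass is still that of a bi-pointed Brownian tree, with $X_{n+1}$ now appearing as an independent extra uniform point inside $\Tt_i$.

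I then apply the base case $n = 3$ inside $\Tt_i$ to the three points $X_i^1, X_i^2, X_{n+1}$: this splits $\Tt_i$ into three bi-pointed Brownian trees whose relative masses follow $\Dir(1/2, 1/2, 1/2)$, conditionally independent of everything else. Because $1/2 + 1/2 + 1/2 = 3/2$, the disaggregation direction of~\eqref{eq:dirichlet distrib factorisation} (namely, multiplying one entry of a Dirichlet vector by an independent Dirichlet sub-vector whose parameters sum to that entry) yields a full mass vector of $2(n+1) - 3$ coordinates distributed as $\Dir(1/2, \dots, 1/2)$. Combinatorially, the new discrete tree $T_{n+1}$ is obtained from $T_n$ by subdividing the edge $e_I$ and attaching a new pendant edge carrying label $n+1$. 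Integrating out the Dirichlet weights, the index $I$ has uniform marginal on the $2n - 3$ edges of $T_n$ (since $\Ec{W_i} = 1/(2n-3)$), so $T_{n+1}$ is distributed as the output of R\'emy's leaf-insertion algorithm applied to a uniform $T_n$, which is well-known to be uniform on $\Bb_{n+1}$ (cf.~\cite[Exercise~7.4.12]{pitman_combinatorial_2006}).

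The main obstacle is essentially bookkeeping: tracking the various conditional independences through the size-biasing and the disaggregation steps, in order to verify that the new tuple $(T_{n+1}, (W'_j)_{1 \leq j \leq 2n-1}, (\Tt'_j, X_j^1, X_j^2)_{1 \leq j \leq 2n-1})$ has the joint law prescribed by the construction in the theorem. Once this is verified, the induction closes and the theorem follows for all $n \geq 3$.
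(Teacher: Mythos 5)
Your argument is correct, but note that the paper does not actually prove Theorem~\ref{thm_discrete_continuous_coupling}: it only cites \cite{aldous_recursive_1994} for the case $n=3$ and points to \cite[Exercises~7.4.12--7.4.13]{pitman_combinatorial_2006} for the general case, describing it as folklore. What you have written is precisely the continuum version of that folklore argument — an induction on $n$ driven by R\'emy's leaf-insertion, the size-biasing identity~\eqref{eq:dirichlet distrib size-biaising}, and the converse (disaggregation) direction of~\eqref{eq:dirichlet distrib factorisation} — so in effect you are supplying the proof the authors chose to omit. All the key steps check out: conditioning on $\{X_{n+1}\in\Tt_I\}$ only reweights the masses (the event has conditional probability $W_I$ given the masses, so the shapes of the regions remain conditionally independent bi-pointed Brownian trees, and $X_{n+1}$ becomes an independent uniform point of $\Tt_I$); the $3/2$ parameter produced by size-biasing is exactly what is needed for the $\Dir(\frac12,\frac12,\frac12)$ split of $\Tt_I$ to reaggregate into a symmetric $\Dir(\frac12,\dots,\frac12)$ on $2n-1$ coordinates; and $I$ is uniform on edges and independent of $T_n$, so $T_{n+1}$ is uniform on $\Bb_{n+1}$. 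Two points deserve an explicit sentence in a full write-up: first, you prove the statement in the ``analysis'' direction (decomposing a given $(n+1)$-pointed Brownian tree), whereas the theorem is phrased in the ``synthesis'' direction (gluing pieces), so you should note that the decomposition along the spanning tree of the marked points is a.s. well defined and inverse to the gluing; second, the display~\eqref{eq:dirichlet distrib factorisation} as stated only gives the aggregation direction, so the disaggregation step should be justified by the beta--gamma (independent Gamma) representation from \cite{dufresne_algebraic_1998}. Neither is a gap, just bookkeeping to be made explicit.
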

See Figure~\ref{fig:decomposition} for an illustration of this construction.
In particular, the combinatorial structure of the paths joining $n$ uniform points $X_1, \dots, X_n$ of $\Tt$ is that of $T_n$.
In the rest of the paper, we will always consider that the continuous tree $\Tt$ and the discrete tree $T_n$ are coupled in this way.

We conclude this section with a lemma comparing the mass of a region in $\Tt$ and some corresponding region in $T_n$.
\begin{lemma}\label{lem:comparison mass vs leaf-count}
	Let $\eps>0$ and let $\left( \Tt, X_1, \dots, X_n \right)$ be an $n$-pointed Brownian tree. 
	Then, with probability $1 - o_n(1)$, for any region $\Rr$ of $\Tt$ delimited by at most two branching points, 
	denoting $R$ the smallest region of $T_n$ that contains all the leaves with label $j \in \{1,\dots,n\}$ such that $X_j \in \Rr$, we have the following bound 
\begin{align*}
	\#R \leq n^\eps \vee (n^{1+\eps} \cdot |\Rr|).
\end{align*}
\end{lemma}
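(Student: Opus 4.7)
}
Fix $\eps>0$ and set $\delta=\eps/4$. The strategy is, for any admissible region $\Rr$, to exhibit a region $R^*$ of $T_n$ containing all the leaves in $L:=\{j:X_j\in\Rr\}$ and whose continuum counterpart $\widetilde{R^*}$ satisfies $|\widetilde{R^*}|\leq |\Rr|+O(n^{-1+\delta})$; combining the minimality of $R$ with Dirichlet concentration then yields the bound on $\#R$.

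The plan is to first establish two concentration estimates, each holding with probability $1-o_n(1)$: (i)~the maximum edge weight satisfies $\max_e W_e\leq n^{-1+\delta}$, obtained by applying standard tail bounds to each $W_e\sim\mathrm{Beta}(1/2,(2n-4)/2)$ (a consequence of \eqref{eq:dirichlet distrib factorisation}) and taking a union bound over the $2n-3$ edges; and (ii)~for every region $R'$ of $T_n$ with $\#R'\geq n^\delta$, one has $\#R'\leq 4n\,|\widetilde{R'}|$, obtained from multiplicative Chernoff-type bounds applied to the Beta-distributed mass $|\widetilde{R'}|=\sum_{e\in R'}W_e$ (noting that a binary region with $l$ leaves has $\Theta(l)$ edges, so the relevant Beta shape parameter is $\Theta(\#R')$), together with a union bound over the $O(n^2)$ possible regions of $T_n$.

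The construction of $R^*$ then proceeds by a case analysis on each boundary branching point $b_i$ of $\Rr$. If $b_i$ is a gluing point of the Aldous coupling, the corresponding node of $T_n$ is taken as the $i$-th boundary of $R^*$, at no mass cost. Otherwise $b_i$ lies in the interior of some tubule $\Tt_{e_i}$. If $\Rr$ sits on a side-branch side of $b_i$ (i.e.\ $\Rr$ is contained in a side-branch of $\Tt_{e_i}$, hence $\Rr\subseteq\Tt_{e_i}$), then $|\Rr|\leq W_{e_i}\leq n^{-1+\delta}$ by~(i); moreover the $X_j$'s sit at the leaf-endpoints of leaf-edges' tubules and so cannot lie in any side-branch of a tubule, whence $\#L=0$ and $\#R=0$, settling the lemma directly in this case. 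Otherwise $\Rr$ lies on a main-path side of $b_i$, and we use an appropriate endpoint of $e_i$ (or no cut at all on that side, when $b_i$ is off the main path of $\Tt_{e_i}$) as the boundary of $R^*$: the only extra mass thereby added to $\widetilde{R^*}$ beyond $|\Rr|$ is confined to $\Tt_{e_i}$, and is therefore bounded by $W_{e_i}\leq n^{-1+\delta}$.

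In every non-degenerate case this construction gives $\widetilde{R^*}\supseteq\Rr$ and $|\widetilde{R^*}|\leq |\Rr|+2n^{-1+\delta}$. By minimality $\#R\leq\#R^*$, and combining with~(ii) (or with the trivial bound $\#R^*\leq n^\delta$ when $\#R^*<n^\delta$) yields $\#R\leq 4n|\Rr|+O(n^\delta)$, whence $\#R\leq n^\eps\vee n^{1+\eps}|\Rr|$ for $n$ large enough. The main obstacle is the case analysis for non-gluing boundary points: one must verify carefully that an appropriate choice of $T_n$-node near $b_i$ (or no cut on that side) yields a valid region of $T_n$ still containing $\Rr$, with the excess mass controlled by the maximum-tubule-mass concentration~(i), and that the subtle degenerate subcase where both $b_i$'s share an endpoint of their tubules is handled correctly.
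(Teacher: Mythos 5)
Your proposal is correct and follows essentially the same route as the paper: identify a discrete region built from the tubules of the coupling that contains all labels of $L$ and whose continuum mass exceeds $|\Rr|$ by at most the mass of the (at most two) tubules straddling $\partial\Rr$, then apply a Beta lower-tail bound to the aggregate Dirichlet mass of discrete regions with at least $n^{\Theta(\eps)}$ leaves and a polynomial union bound over such regions. The only cosmetic difference is that you control the two boundary tubules additively via a separate $\max_e W_e\leq n^{-1+\delta}$ estimate, whereas the paper simply drops those two tubules from the mass lower bound, which lets it skip both your step~(i) and the case analysis (it just takes all edges whose tubule interiors meet $\Rr$).
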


\begin{proof}
	Let $\Rr$ be a region of $\Tt$ delimited by at most two branching points. 
	This way the topological frontier $\partial \Rr$ of the region $\Rr$ contains at most two elements. 
	With the notation just above, for any $1 \leq i \leq 2n-3$, we denote by $\overset{\circ}{\Tt_i}$ the interior of the region $\Tt_i$, seen as a subset of $\Tt$. 
	We introduce 
	\begin{align*}
		I=\enstq{i\in \{1,\dots,2n-3\}}{\overset{\circ}{\Tt_i} \cap \Rr \neq \emptyset }.
	\end{align*} 
We note that the set of edges $\{e_i, i\in I\}$ defines a region $\tilde{R}$ of $T_n$ delimited by at most two nodes; we denote $N:=\#\tilde{R}$. 
By definition of $I$, the region $\tilde{R}$ contains all the leaves with labels $j \in \{1,\dots,n\}$ that are such that $X_j \in \Rr$, so $R\subset \tilde{R}$ and so $\#R\leq \#\tilde{R}=N$. 
If $N<n^\eps$ we have nothing to prove, so let us assume $N \geq n^{\eps}$. 
Remark that since $\tilde{R}$ is a region of $T_n$, it has at most two unlabelled leaves, so its number of edges satisfies $\#I\in \{2N-3,2N-1,2N+1\}$, depending on whether it is delimited by $0$ or $1$ or $2$ nodes. 
 
Now, for any $i\in I$, if $\overset{\circ}{\Tt_i} \cap \Rr \neq \overset{\circ}{\Tt_i}$ then it means that $(\partial \Rr)\cap \Tt_i \neq \emptyset$. 
Since $\partial \Rr$ has cardinality at most $2$, this can happen for at most two such $i\in I$, say $i_1$ and $i_2$ (pick them arbitrarily if only $0$ or $1$ such value exists).
This entails $\Rr\supset \bigcup_{i\in I \setminus \{i_1,i_2\}} \Tt_i$.

Now, let $(W_i)_{1\leq i \leq 2n-3}$ be as in the coupling of Theorem~\ref{thm_discrete_continuous_coupling} (that is, $W_i$ is the mass of the set of those points $x \in \Tt$ such that $e_i$ is the closest edge of $T_n$ from $x$). 
From the above reasoning, we have
\begin{equation}\label{eqn:mass_region_and_W}
	\left| \Rr \right|\geq \sum_{i \in I\setminus\{i_1,i_2\}} W_i.
\end{equation}

We now condition on the discrete tree $T_n$. We recall that $(W_i)_{1\leq i \leq 2n-3}$ is independent of $T_n$ and has Dirichlet distribution, so according to \eqref{eq:dirichlet distrib factorisation} we have 
	\begin{align*}
	\sum_{i\in I \setminus \{i_1,i_2\}} W_i 
		\sim \mathrm{Beta}\left(\frac{\#I -2}{2},\frac{2n-1-\#I}{2}\right)
	\end{align*}
	conditionally on $T_n$. Let us now fix the region $\tilde{R}$ of $T_n$. Using the explicit density of the $\mathrm{Beta}$ distribution, we can write, recalling that $N = \# \tilde{R} \geq n^\eps$ and $2N-3 \leq \# I \leq 2N+1$:
	\begin{align*}
		\Ppsq{\sum_{i\in I \setminus \{i_1,i_2\}} W_j \leq  n^{-1-\eps}\cdot  N}{T_n}&= \frac{\Gamma\left(\frac{2n-3}{2}\right)}{\Gamma\left(\frac{\#I -2}{2}\right)\Gamma\left(\frac{2n-1-\#I}{2}\right)} \int_0^{n^{-1-\eps}\cdot N} x^{\frac{\#I -4}{2}} \cdot (1-x)^{\frac{2n-3-\#I}{2}} \mathrm{d}x.\\
		&\leq \frac{n^{\#I/2+1}}{(\#I/2)^{(\#I)/2} e^{O(\#I)}} \cdot (n^{-1-\eps} N)^{\frac{\# I-4}{2}}\\
		&\leq n^{-\eps n^{\eps}+O(\frac{n^\eps}{\log n})},
	\end{align*}
	which decays faster than any polynomial in $n$. 
	In the above computation we have used the fact that $\frac{\Gamma(x)}{\Gamma(x-k)}\leq x^k$ for any $x>0$ and any integer $k$ such that $x-k >0$, and in the end our assumption that $N\geq n^\eps$.	
	Still conditionally on $T_n$, we can perform a union-bound over all the $O(n^{4})$ possibilities for choosing the region $R$ and the two labels $i_1,i_2$ corresponding to the removed edges. Combining this with~\eqref{eqn:mass_region_and_W}, with high probability, for any region $\Rr$ and corresponding $I,i_1,i_2,N$, we have 
	\begin{align*}
		|\Rr| \geq \sum_{i\in I\setminus {i_1,i_2}} W_i \geq n^{-1-\eps}\cdot N \geq  n^{-1-\eps}\cdot  \# \tilde{R} \geq  n^{-1-\eps}\cdot  \# R ,
	\end{align*}
	which is what we wanted to prove.
\end{proof}
\subsection{The Aldous recursive decomposition}\label{subsec:aldous recursive decomposition}

\paragraph{Decomposing $\Tt$ into three regions.}
\begin{figure}
	\begin{tabular*}{15cm}{cc}
		\subfloat[A realization of the Brownian tree $\Tt$. \label{subfig:a}]{\includegraphics[page=1]{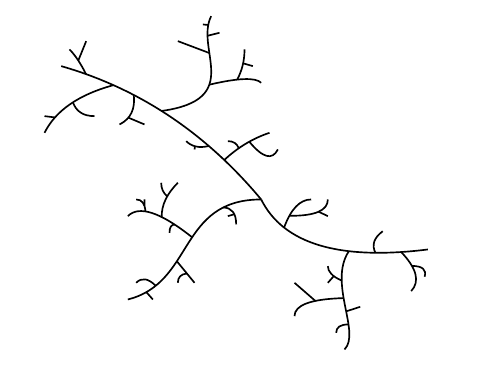}} &
		\subfloat[Its recursive decomposition up to depth $2$. \label{subfig:b}]{\includegraphics[page=3]{decomposition_into_regions}}
	\end{tabular*}
	\caption{A decomposed Brownian tree $\Tt$ and its decomposition  $(\Rr[\ii])_{\ii \in \mathbb{T}_3^2}$ of depth $2$.}\label{fig:recursive decomposition}
\end{figure}
\begin{figure}
	\begin{tabular*}{15cm}{cccc}
		\subfloat[A region delimited by one point. \label{subfig:a}]{\includegraphics[page=1]{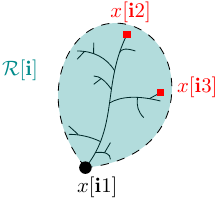}}&
		\subfloat[Its decomposition. \label{subfig:b}]{\includegraphics[page=2]{decomposition_region_1_point}}
		&
			\subfloat[A region delimited by two points. \label{subfig:b}]{\includegraphics[page=1]{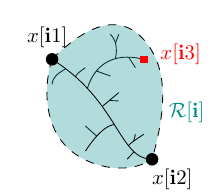}}
		&
			\subfloat[Its decomposition. \label{subfig:b}]{\includegraphics[page=2]{decomposition_region_2_points}}
	\end{tabular*}
	\caption{One step of the decomposition for a region $\Rr[\ii]$ delimited by respectively one point and two points. Note that in both cases, the newly created region $\Rr[\ii 3]$ is delimited by only one point.}\label{fig:one step recursive decomposition}
\end{figure}
We now introduce a recursive decomposition of the Brownian tree $\Tt$, which consists of repeatedly applying the above decomposition for $n=3$. 
More precisely, let $\left( \Tt, x[1], x[2], x[3] \right)$ be a $3$-pointed Brownian tree with mass $1$. 
Note that almost surely, the points $x[1]$, $x[2]$ and $x[3]$ are leaves. 
In this case, there exists a unique branching point $b[\emptyset]$ of $\Tt$ such that $x[1]$, $x[2]$ and $x[3]$ lie in three different connected components of $\Tt \backslash \{b[\emptyset]\}$. 
For $i \in \{1,2,3\}$, we call $\Rr[i]$ the (closure in $\Tt$ of the) connected component containing $x[i]$. 
Then Theorem~\ref{thm_discrete_continuous_coupling} for $n=3$ implies that the vector $\left( \left| \Rr[1] \right|, \left| \Rr[2] \right|, \left| \Rr[3] \right| \right)$ has distribution $\Dir \left( \frac{1}{2}, \frac{1}{2}, \frac{1}{2} \right)$ and that conditionally on this vector, the three regions $\left( \Rr[i], b[\emptyset], x[i] \right)$ are independent bi-pointed Brownian trees with prescribed masses. 
We will then pick a third point uniformly at random in each $\Rr[i]$ and apply this decomposition recursively.

\paragraph{The complete ternary tree.}
More precisely, let
\[ \mathbb{T}_3=\bigcup_{k \geq 0} \{1,2,3\}^k\]
be the set of finite words on the alphabet $\{1,2,3\}$. 
We will generally use the notation $\ii=i_1 i_2 \dots i_k$ to denote an element of $\mathbb{T}_3$. 
For such a word we call $k$ the \emph{depth} of $\ii$, and denote by $\mathbb{T}_3^k$ the set of elements of $ \mathbb{T}_3$ of depth $k$. 
The set $\mathbb{T}_3$ can be seen as the complete ternary tree, with root $\emptyset$ and where the parent of $i_1 i_2 \dots i_k$ is $i_1 i_2 \dots i_{k-1}$. 
For any $k \geq 0$ and any word $\ii \in \mathbb{T}_3$ with depth at least $k$, we will write $\ii_k=i_1 \dots i_k$.
Finally, we will use concatenation of words of $\mathbb{T}_3$: if $\ii=i_1 \dots i_k$ and $\mathbf{j}=j_1 \dots j_{\ell}$, we will write $\ii\mathbf{j}$ for $i_1 \dots i_k j_1 \dots j_{\ell}$.

\paragraph{The recursive decomposition of the Brownian tree.}

Our decomposition will associate to each word $\ii $ of $\mathbb{T}_3$ a region $\Rr[\ii ]$ of $\Tt$ in such a way that for all $\ii  \in \mathbb{T}_3$, the regions $\Rr[\ii 1]$, $\Rr[\ii 2]$ and $\Rr[\ii 3]$ form a partition of $\Rr[\ii ]$.
More precisely, we write $\Rr[\emptyset]=\Tt$ and we define the branching point $b[\emptyset]$ and the bi-pointed trees $\Rr[1]$, $\Rr[2]$ and $\Rr[3]$ as in the previous paragraph. 
Moreover, let $\ii \in \mathbb{T}_3$ of depth $k \geq 1$ and assume that the region $\Rr[\ii ]$ has been defined, and call $x[\ii 1]$, $x[\ii 2]$ its marked points. 
Conditionally on all the rest, let $x[\ii 3]$ be a random point sampled according to the mass measure on $\Rr[\ii ]$. 
We denote by $b[\ii]$ the unique branching point of $\Rr[\ii ]$ such that $x[\ii 1]$, $x[\ii 2]$ and $x[\ii 3]$ lie in three different connected components of $\Rr[\ii ] \backslash \{b[\ii]\}$, and we call the closures of these three components respectively $\Rr[\ii 1]$, $\Rr[\ii 2]$ and $\Rr[\ii 3]$. 
Then, for $a \in \{1,2,3\}$, we set $x[\ii a1]:=b[\ii]$ and $x[\ii a 2]:=x[\ii a]$ (that is, we mark two points on each of the regions $\Rr[\ii a]$, in the same way as in the first step).
Finally, we equip the regions $\Rr[\ii a]$ with the metric and the measure naturally inherited from $\Rr[\ii ]$. 
See Figure~\ref{fig:recursive decomposition} and Figure~\ref{fig:one step recursive decomposition} for an illustration.

It can easily be checked by induction on the depth that this construction makes sense and that for all $\ii  \in \mathbb{T}_3$, the tree $\Rr[\ii]$ is a bi-pointed Brownian tree with a randomized mass. 
Indeed, if $\left( \Rr[\ii ], x[\ii 1], x[\ii 2] \right)$ is a Brownian tree, then almost surely the points $x[\ii 1]$, $x[\ii 2]$ and $x[\ii 3]$ defined above are pairwise distinct leaves, so $b[\ii]$ is uniquely characterized and distinct from $x[\ii 1]$, $x[\ii 2]$, $x[\ii 3]$ and Theorem~\ref{thm_discrete_continuous_coupling} for $n=3$ ensures that the $\Rr[\ii a]$ are Brownian trees. 
If $\ii $ has depth $k$, we will call $\Rr[\ii ]$ a \emph{region of scale $k$}. 
Using Theorem~\ref{thm_discrete_continuous_coupling} with $n=3$ at each scale, we easily have the following independence properties.

\begin{proposition}\label{prop_independence_decomposition}
The following statements hold. 
	\begin{enumerate}[(i)]
		\item The vectors 
		\begin{equation}\label{eqn:vector_relative_masses}
		\left( \frac{\left| \Rr[\ii 1] \right|}{\left| \Rr[\ii ] \right|}, \frac{\left| \Rr[\ii 2] \right|}{\left| \Rr[\ii ] \right|}, \frac{\left| \Rr[\ii 3] \right|}{\left| \Rr[\ii ] \right|} \right)
		\end{equation}
		for $\ii  \in \mathbb{T}_3$ are i.i.d.\ with distribution $\Dir \left( \frac12, \frac12, \frac12 \right)$.
		\item For all $k \geq 1$, conditionally on their masses, the measured metric spaces $\left( \Rr[\ii ], x[\ii 1], x[\ii 2] \right)$ for $\ii  \in \mathbb{T}_3^k$ are i.i.d.\ bi-pointed Brownian trees.
	\end{enumerate}
\end{proposition}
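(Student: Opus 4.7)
My plan is to prove both (i) and (ii) jointly by induction on the depth $k \geq 1$, under the hypothesis $(\mathrm{H}_k)$: conditionally on the mass vector $\bigl(|\Rr[\ii]|\bigr)_{\ii \in \mathbb{T}_3^k}$, the measured metric spaces $(\Rr[\ii], x[\ii 1], x[\ii 2])_{\ii \in \mathbb{T}_3^k}$ are independent bi-pointed Brownian trees with the prescribed masses, and the ratio triples in \eqref{eqn:vector_relative_masses} for $\ii$ at depths $0 \leq j < k$ are i.i.d.\ $\Dir(1/2, 1/2, 1/2)$ and are determined by these masses. The base case $k=1$ is a direct application of Theorem~\ref{thm_discrete_continuous_coupling} for $n=3$ to the $3$-pointed Brownian tree $(\Tt, x[1], x[2], x[3])$: splitting at the unique separating branching point $b[\emptyset]$ produces three bi-pointed Brownian trees $(\Rr[i], b[\emptyset], x[i])$ with $\Dir(1/2,1/2,1/2)$ mass vector and conditionally independent given these masses. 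With the relabelling $x[i1] := b[\emptyset]$, $x[i2] := x[i]$ built into the construction, this is exactly $(\mathrm{H}_1)$.

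For the induction step, assume $(\mathrm{H}_k)$. For each $\ii \in \mathbb{T}_3^k$, the construction samples $x[\ii 3]$ uniformly in $\Rr[\ii]$ according to its normalized mass measure, independently across $\ii$ and of all prior randomness. Since a bi-pointed Brownian tree has, by definition, its two marked points drawn i.i.d.\ uniformly from the mass measure, appending one more independent uniform sample turns $(\Rr[\ii], x[\ii 1], x[\ii 2])$ into a $3$-pointed Brownian tree of mass $|\Rr[\ii]|$. Applying Theorem~\ref{thm_discrete_continuous_coupling} for $n=3$ within each such region (after rescaling the mass to $1$) then yields that the three sub-regions $(\Rr[\ii a], b[\ii], x[\ii a])_{a \in \{1,2,3\}}$ are conditionally independent bi-pointed Brownian trees with normalized mass vector $\Dir(1/2, 1/2, 1/2)$, and that this mass vector is independent of the rescaled metric spaces given the overall mass. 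The construction's relabelling $x[\ii a 1] := b[\ii]$, $x[\ii a 2] := x[\ii a]$ matches the output of the theorem, and independence across different $\ii \in \mathbb{T}_3^k$ at the next level is preserved because the freshly sampled third points and the resulting decompositions are produced independently in each region. This establishes $(\mathrm{H}_{k+1})$.

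The main point requiring care is the bookkeeping of conditional independence: at each inductive step one must verify that all newly generated randomness (the third points, the Dirichlet ratios, and the branching points) is measurable with respect to the $3$-pointed structures inside the separate regions, so that independence at depth $k$ lifts cleanly to independence at depth $k+1$. Once the induction is closed, (i) follows by collecting the Dirichlet statements across all depths, and (ii) is precisely the first half of $(\mathrm{H}_k)$.
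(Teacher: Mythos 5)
Your proof is correct and follows essentially the same route as the paper: an induction on the depth $k$, conditioning on the masses at depth $k$ and applying Theorem~\ref{thm_discrete_continuous_coupling} with $n=3$ inside each region to pass to depth $k+1$. The only cosmetic difference is that the paper phrases the induction hypothesis in terms of the $\sigma$-algebra $\mathcal{F}_k$ generated by all masses up to depth $k$, which is equivalent to your $(\mathrm{H}_k)$.
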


\begin{proof}
	Let $\mathcal{F}_k$ be the $\sigma$-algebra generated by the masses $\left| \Rr[\ii ] \right|$ for $\ii$ of depth at most $k$. 
	It is sufficient to show that for all $k \geq 0$, conditionally on $\mathcal{F}_k$, the vectors~\eqref{eqn:vector_relative_masses} for $\ii $ of depth $k$ are i.i.d.\ with law $\Dir \left( \frac12, \frac12, \frac12 \right)$, and that conditionally on $\mathcal{F}_k$ and on those vectors, the regions $\Rr[\ii ]$ for $\ii $ of depth $k+1$ are independent bi-pointed Brownian trees with prescribed masses.
	We prove this statement by induction on $k$.
	
	For $k=0$, this is just Theorem~\ref{thm_discrete_continuous_coupling} for $n=3$. If the statement is true for some $k \geq 0$, the induction hypothesis guarantees that conditionally on $\mathcal{F}_k$, the $\Rr[\ii ]$ for $\ii $ of depth $k+1$ are independent bi-pointed Brownian trees with randomized masses. We then apply Theorem~\ref{thm_discrete_continuous_coupling} for $n=3$ to each of these Brownian trees, still conditionally on $\mathcal{F}_k$.
\end{proof}

\paragraph{Zooming in on a random point.}
Finally, let $x \in \Tt$ and $k \geq 0$. 
If $x$ is not one of the points $x[\ii]$, which is the case for almost every $x \in \Tt$, we will denote by $\ii_k(x)$ the unique index $\ii  \in \mathbb{T}_3$ of depth $k$ such that $x \in \Rr[\ii ]$, and by $i_k(x)$ the $k$-th letter of the word $\ii _k(x)$. 
It will be useful for us to study the recursive decomposition around a random point $X$ picked uniformly in $\Tt$.  
In this setting, we will need the following result.

\begin{lemma}\label{lem_independence_pointed_decomposition}
	Let $\left( \Tt, \left( \Rr[\ii] \right)_{\ii \in \mathbb{T}_3} \right)$ be a decomposed Brownian tree with a point $X$ sampled uniformly on $\Tt$, independently of the decomposition.
	Then the vectors
	\begin{equation}\label{eqn:relative_masses_directed}
	\left( \frac{\left| \Rr[\ii _j(X)1] \right|}{\left| \Rr[\ii _j(X)] \right|}, \frac{\left| \Rr[\ii _j(X)2] \right|}{\left| \Rr[\ii _j(X)] \right|}, \frac{\left| \Rr[\ii _j(X)3] \right|}{\left| \Rr[\ii _j(X)] \right|}, i_{j+1}(X) \right)
	\end{equation}
	for $j \geq 0$ are i.i.d.. Moreover, they have the distribution of $(W_1, W_2, W_3, I)$, where $(W_1, W_2, W_3) \sim \Dir \left( \frac{1}{2}, \frac{1}{2}, \frac{1}{2} \right)$ and $\Ppsq{ I=i}{W_1, W_2, W_3}=W_i$ for $i \in \{1,2,3\}$. In particular, the index $I$ is uniform on $\{1,2,3\}$ and the variable $W_I$ follows the $\mathrm{Beta}\left( \frac{3}{2},1 \right)$ distribution.
\end{lemma}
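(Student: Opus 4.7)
The plan is to combine the independence of the relative-mass vectors across the tree $\mathbb{T}_3$ (Proposition~\ref{prop_independence_decomposition}(i)) with the size-biased picking property of the Dirichlet distribution~\eqref{eq:dirichlet distrib size-biaising}, using the fact that a uniformly sampled point $X$ is, conditionally on the decomposition, a "size-biased pick" at every scale.

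First I would set the notation: for $\ii \in \mathbb{T}_3$, let
\[ V_\ii = \left( \frac{|\Rr[\ii 1]|}{|\Rr[\ii]|}, \frac{|\Rr[\ii 2]|}{|\Rr[\ii]|}, \frac{|\Rr[\ii 3]|}{|\Rr[\ii]|} \right), \]
so that by Proposition~\ref{prop_independence_decomposition}(i) the vectors $(V_\ii)_{\ii \in \mathbb{T}_3}$ are i.i.d. $\Dir(1/2,1/2,1/2)$. The key identity to exploit is that, conditionally on the entire decomposition $(\Rr[\ii])_{\ii \in \mathbb{T}_3}$, the uniformity of $X$ yields
\[ \Pcsq{\ii_j(X) = i_1 i_2 \dots i_j}{(\Rr[\ii])} = |\Rr[i_1 \dots i_j]| = \prod_{k=1}^{j} (V_{i_1 \dots i_{k-1}})_{i_k}, \]
with the convention $i_1 \dots i_0 = \emptyset$.

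Next, fix $j \geq 0$ and test the joint law of the first $j+1$ vectors in~\eqref{eqn:relative_masses_directed}. For any measurable sets $A_0, A_1, \dots, A_j$ and indices $i_1, \dots, i_{j+1} \in \{1,2,3\}$, using the identity above and then Fubini, one gets
\[ \Pp{\forall 0 \leq k \leq j,\ V_{i_1 \dots i_k}\in A_k,\ i_{k+1}(X) = i_{k+1}} = \Ec{\prod_{k=0}^{j} \indicator{V_{i_1 \dots i_k} \in A_k} \cdot (V_{i_1 \dots i_k})_{i_{k+1}}}. \]
By the independence in Proposition~\ref{prop_independence_decomposition}(i), this expectation factorizes into $\prod_{k=0}^{j} \Ec{\indicator{V \in A_k} V_{i_{k+1}}}$ where $V \sim \Dir(1/2,1/2,1/2)$. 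Summing over $i_1, \dots, i_j$ to integrate out the path, we recognize each factor as $\Pp{(V,I) \in A_k \times \{i_{k+1}\}}$ for the size-biased index $I$, which gives exactly the desired i.i.d. product structure.

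Finally, for the marginal law, apply~\eqref{eq:dirichlet distrib size-biaising} with $a_1=a_2=a_3=1/2$: this immediately yields $\Pp{I=i} = (1/2)/(3/2) = 1/3$ (so $I$ is uniform on $\{1,2,3\}$), and conditionally on $I=i$ the vector $(W_1,W_2,W_3)$ is $\Dir(1/2,\dots,3/2,\dots,1/2)$ with the $3/2$ in position $i$; the marginal of the $i$-th coordinate under this law is $\mathrm{Beta}(3/2,1)$ by~\eqref{eq:dirichlet distrib factorisation}, so $W_I \sim \mathrm{Beta}(3/2,1)$ unconditionally as well. The only mildly delicate point is the conditioning argument that turns a pointwise identity indexed by the random path $\ii_j(X)$ into the required i.i.d. statement, but the independence of the family $(V_\ii)_{\ii \in \mathbb{T}_3}$ makes the factorization above rigorous without further work.
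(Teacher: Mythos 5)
Your proof is correct. It reaches the conclusion by a slightly different route than the paper: the paper re-runs the induction behind Proposition~\ref{prop_independence_decomposition}, scale by scale, with the enlarged filtration $\mathcal{G}_k$ generated by the vectors~\eqref{eqn:relative_masses_directed}, using at each step that the region containing $X$ is again a bi-pointed Brownian tree with prescribed mass; you instead take the already-established i.i.d.\ structure of the relative-mass vectors $(V_{\ii})_{\ii \in \mathbb{T}_3}$ as a black box, condition on the whole decomposition at once, write $\Pcsq{\ii_{j+1}(X)=i_1\cdots i_{j+1}}{(\Rr[\ii])}$ as the telescoping product of the coordinates of the $V_{i_1\cdots i_k}$ along the path, and factorize by Fubini and independence (legitimate since the prefixes $i_1\cdots i_k$, $0\le k\le j$, are distinct words). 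This one-shot computation is arguably cleaner and cleanly separates the randomness of the tree from that of $X$, at the small cost of only yielding the law of the mass vectors and the indices, whereas the paper's inductive formulation also keeps track of the conditional Brownian-tree structure of $\Rr[\ii_k(X)]$ along the way; for the statement as written, both are complete, and your identification of the marginal of $W_I$ via~\eqref{eq:dirichlet distrib size-biaising} and the aggregation property~\eqref{eq:dirichlet distrib factorisation} matches the paper's. One cosmetic remark: since the events $\{i_{k+1}(X)=i_{k+1}\}$ for $0\le k\le j$ already pin down the path, the final "summing over $i_1,\dots,i_j$" is not needed — testing on cylinder events $A_k\times\{i_{k+1}\}$ already determines the joint law.
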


\begin{proof}
	The argument is basically the same as the proof of Proposition~\ref{prop_independence_decomposition}, where $\mathcal{F}_k$ is replaced by the $\sigma$-algebra $\mathcal{G}_k$ generated by the vectors~\eqref{eqn:relative_masses_directed} for $0 \leq j \leq k-1$.	
	The only additional element is that since $X$ is picked independently of the decomposition, conditionally on the Brownian trees $\Rr[1]$, $\Rr[2]$,  $\Rr[3]$, it has probability $\left| \Rr[a] \right|$ to be in $\Rr[a]$ for $a \in \{1,2,3\}$. 
	In particular, we have
	\[ \Ppsq{i_1(X)=a}{\left| \Rr[1] \right|, \left| \Rr[2] \right|, \left| \Rr[3] \right|} = |\Rr[a]| \]
	and conditionally on $\left( \left| \Rr[1] \right|, \left| \Rr[2] \right|, \left| \Rr[3] \right|, i_1(X) \right)$, the region $\Rr[i_1(X)]$ is still a (bi-pointed) Brownian tree with prescribed mass. The heredity step is adapted in the same way.	
	Finally, the fact that $W_I$ has a $\mathrm{Beta}\left(\frac{3}{2},1\right)$ distribution is a consequence of~\eqref{eq:dirichlet distrib factorisation} and~\eqref{eq:dirichlet distrib size-biaising}.
\end{proof}
\begin{remark}\label{rem:mass in Tt as probability}
Often in the paper, it will be useful to re-interpret a quantity defined as the mass of a certain subset of $\Tt$, typically of the form $|\cup_{\ii \in U^k}\Rr[\ii]|$ with $U^k \subset \mathbb{T}_3^k$, as the probability that a random point $X$ falls into it. This will typically take the following form
\begin{align*}
	\left|\bigcup_{\ii \in U^k}\Rr[\ii]\right| = \Ppsq{\ii_k(X)\in U^k}{\Tt, \left( \Rr[\ii] \right)_{\ii \in \mathbb{T}_3} }.
\end{align*}
\end{remark}
\paragraph{Estimates for the size of the regions.}
We now present a very rough result controlling the sizes of the regions appearing in the Aldous recursive decomposition of $\Tt$.
\begin{lemma}\label{lem:rough_control_size_regions}
	There are constants $C>c>0$ such that almost surely, for $k$ large enough, for all $\ii  \in \mathbb{T}_3^k$, we have
	\[ e^{-Ck} \leq \left| \Rr[\ii ] \right| \leq e^{-ck}. \]
\end{lemma}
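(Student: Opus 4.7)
The plan is to express the mass of each region $\Rr[\ii]$ as a product of independent, identically distributed factors along the branch of $\mathbb{T}_3$ leading to $\ii$, and then combine a standard Cramér--Chernoff large deviation bound with a union bound over $\mathbb{T}_3^k$ and a Borel--Cantelli argument.

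By Proposition~\ref{prop_independence_decomposition}(i), the vectors of relative masses $\left( |\Rr[\ii 1]|/|\Rr[\ii]|, |\Rr[\ii 2]|/|\Rr[\ii]|, |\Rr[\ii 3]|/|\Rr[\ii]| \right)$ for $\ii \in \mathbb{T}_3$ are i.i.d.\ with distribution $\Dir(1/2, 1/2, 1/2)$. For a fixed $\ii = i_1\cdots i_k\in \mathbb{T}_3^k$, this allows me to write
\[ |\Rr[\ii]| = \prod_{j=1}^k D_j^{(\ii)}, \qquad D_j^{(\ii)} := \frac{|\Rr[\ii_j]|}{|\Rr[\ii_{j-1}]|}, \]
where the $D_j^{(\ii)}$ are mutually independent, each being a single coordinate of a Dirichlet$(1/2, 1/2, 1/2)$ vector, hence Beta$(1/2, 1)$-distributed with density $\frac{1}{2}x^{-1/2}$ on $(0,1)$. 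A direct computation gives $\mathbb{E}[D^t] = 1/(2t+1)$ for all $t > -1/2$, so both tails of $\log D$ have finite exponential moments.

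Applying Markov's inequality to $\exp\bigl(t \sum_j \log D_j^{(\ii)}\bigr)$ for both signs of $t$ then yields, for any $c, C > 0$, any $t > 0$ and any $s \in (0, 1/2)$,
\[ \mathbb{P}\bigl( |\Rr[\ii]| > e^{-ck} \bigr) \leq \left( \frac{e^{ct}}{2t+1} \right)^{\!k} \quad \text{and} \quad \mathbb{P}\bigl( |\Rr[\ii]| < e^{-Ck} \bigr) \leq \left( \frac{e^{-sC}}{1-2s} \right)^{\!k}. \]
Choosing for instance $t = 2$ and $c > 0$ small enough makes the first ratio strictly less than $1/3$; similarly, fixing $s \in (0, 1/2)$ and taking $C$ large enough makes the second ratio strictly less than $1/3$. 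A union bound over the $3^k$ elements of $\mathbb{T}_3^k$ then gives a geometrically decaying bound on the probability that some $\ii \in \mathbb{T}_3^k$ violates either inequality, which is summable in $k$, and Borel--Cantelli concludes.

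There is no real obstacle here: the only point to check is that the Beta$(1/2, 1)$ distribution has enough integrability on both sides to beat the $3^k$ union bound, which is clear from the finiteness of $\mathbb{E}[D^t] = 1/(2t+1)$ on the whole interval $(-1/2, \infty)$. The argument only uses the marginal tail of $|\Rr[\ii]|$ for each fixed $\ii$, so the (non-trivial) correlations between the masses of distinct regions at depth $k$ play no role.
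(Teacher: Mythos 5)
Your proposal is correct and follows essentially the same route as the paper: writing $\log|\Rr[\ii]|$ as a sum of i.i.d.\ increments coming from the $\Dir(\frac12,\frac12,\frac12)$ splits (i.e.\ viewing it as a branching random walk along each branch), applying a Chernoff bound on both tails using the explicit moments $\Ec{W_1^t}=\frac{1}{2t+1}$, and finishing with a union bound over $\mathbb{T}_3^k$ and Borel--Cantelli. The only cosmetic difference is that the paper arranges the per-level bound to be $4^{-k}$ rather than phrasing it as ``strictly less than $3^{-k}$''; both yield a summable bound after the union bound.
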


\begin{proof}
	The proof uses classical branching random walk arguments. 
	We first notice that, by the independence properties of our recursive decomposition, the process $\left( \log \left| \Rr[\ii ] \right| \right)_{\ii  \in \mathbb{T}_3}$ is a branching random walk. 
	That is, the vectors
	\[ \left( \log \left| \Rr[\ii 1] \right|-\log \left| \Rr[\ii ] \right|, \log \left| \Rr[\ii 2] \right|-\log \left| \Rr[\ii ] \right|, \log \left| \Rr[\ii 3] \right|-\log \left| \Rr[\ii ] \right| \right) \]
	for $\ii  \in \mathbb{T}_3$ are i.i.d.\ and have the law of $\left( \log W_1, \log W_2, \log W_3 \right)$, where $(W_1,W_2,W_3)$ has distribution $\Dir \left( \frac{1}{2}, \frac{1}{2}, \frac{1}{2} \right)$.
	Therefore, for any $C>0$, using the Chernoff bound we can write, for any $\lambda>0$, for any $\ii\in \mathbb{T}_3^k$,
	\begin{align}\label{eq:chernoff inequality upper bound size regions}
		\Pp{\log \left| \Rr[\ii ] \right| \leq -Ck} \leq e^{-\lambda C k} \Ec{e^{-\lambda \log(W_1)}}^k. 
	\end{align}
	Since Dirichlet distributions have polynomial tails, we can find $\lambda$ such that the expectation is finite. 
	We can then find $C$ such that the right-hand side in the last equation is bounded by $4^{-k}$. 
	We conclude the proof by a union bound over $\ii \in \mathbb{T}_3^k$ and then using the Borel--Cantelli lemma over $k$.
	
	Similarly, for $c>0$ and $\lambda>0$, we have
	\[ \Pp{\log \left| \Rr[\ii ] \right| \geq -ck} \leq e^{\lambda c k} \Ec{e^{\lambda \log(W_1)}}^k. \]
	Since $W_1$ does not have an atom at zero, there exists $\lambda>0$ such that $\Ec{e^{\lambda \log(W_1)}}<\frac{1}{5}$. 
	Once the value of $\lambda$ is fixed, we can choose $c>0$ sufficiently small so that the last right-hand side is bounded by $4^{-k}$. 
	This proves the other direction, again by union bound and Borel--Cantelli.
\end{proof}

\section{Proof of Proposition~\ref{prop_homeo_mass}} \label{sec:proof of prop homeo mass}
This entire section is devoted to proving Proposition~\ref{prop_homeo_mass}, which is central to the proof of the main results in the next section. 

\paragraph{Rough idea of the proof.}
We start with a rough idea of the proof. 
Let us fix a point $x \in \Tt$ and consider the ratios
\begin{equation}\label{eqn_ratios_around_x}
\left( \frac{\left| \Rr[\ijx] \right|}{\left| \Rr[\ii_{j-1}(x)] \right|} \right)_{1 \leq j \leq k}
\end{equation}
obtained by "zooming scale after scale" around $x$. By Lemma~\ref{lem_independence_pointed_decomposition}, these are i.i.d.\ variables with a fixed, absolutely continuous distribution. 
Using this, we will argue that if we fix a small region $r'$ of $\Tt'$, the probability that we can find nested regions of $\Tt'$ around $r'$ that respect the ratios~\eqref{eqn_ratios_around_x} up to a factor $1\pm\delta$ at most of the scales $1 \leq j \leq k$ is of order $\delta^k$. 
By choosing $\delta$ small enough, we will be able to do a union bound over the possible candidate regions $r'$ of $\Tt'$. 
This shows that for any small region $r'$ of $\mathcal{T}'$, a homeomorphism sending $x$ to a point of $r'$ will have "$\delta$-mismatches" between the ratios $\frac{\left| \Rr[\ijx] \right|}{\left| \Rr[\ii_{j-1}(x)] \right|}$ and $\frac{\left| \Psi \left( \Rr[\ijx] \right) \right|}{\left| \Psi \left( \Rr[\ii_{j-1}(x)] \right) \right|}$ for "many" scales $j$ (this is Proposition~\ref{prop_existence_mismatches} below, that we prove in Section~\ref{subsec:mismatches}). 
Moreover, the ratio 
\begin{equation}\label{eqn:ratio_rprime_vs_r}
	\frac{\left| \Psi \left( \Rr[\ikx] \right) \right|}{\left| \Rr[\ikx] \right|}
\end{equation}
can be written as the telescopic product of those mismatches, so the existence of mismatches proves that the ratio~\eqref{eqn:ratio_rprime_vs_r} will vary "quite often" by a factor $1\pm\delta$.
To conclude, we will argue that for most points $x$, those mismatches cannot compensate each other. 
Indeed, if for a region $\Rr[\ii]=\Rr[\ii 1] \cup \Rr[\ii 2] \cup \Rr[\ii 3]$ we have $\frac{\left|\Psi \left( \Rr[\ii] \right)\right|}{\left|\Rr[\ii] \right|} \ll 1$, then if a mismatch makes the ratio $\frac{\left|\Psi \left( \Rr[\ii 1] \right)\right|}{\left|\Rr[\ii 1] \right|}$ closer to $1$, it will make the ratios $\frac{\left|\Psi \left( \Rr[\ii 2] \right)\right|}{\left|\Rr[\ii 2] \right|}$ and $\frac{\left|\Psi \left( \Rr[\ii 3] \right)\right|}{\left|\Rr[\ii 3] \right|}$ even smaller. Another way to say this is that if $x$ is picked uniformly at random in $\Tt$, the ratios~\eqref{eqn:ratio_rprime_vs_r} form a nonnegative martingale in $k$ (see~\eqref{eqn:martingale} below), so they converge almost surely. By existence of mismatches, they change by a factor $1\pm\delta$ many times, so the limit of the martingale has to be $0$. This argument is done in a quantitative way in Section~\ref{subsec:martingale}.

\subsection{Finding mismatches}\label{subsec:mismatches}

Before giving the precise definition of mismatches, it will be convenient to restrict ourselves to a (not too small) subset of the intermediate scales. More precisely, we first notice that by construction of the Aldous recursive decomposition, if a word $\ii$ has its last letter equal to $3$, then the boundary of the region $\Rr[\ii]$ in $\Tt$ is a single point (see Figure~\ref{fig:one step recursive decomposition}).
\begin{definition}\label{defn:good_scale}
		Let $\alpha>0$ and let $\ii \in \mathbb{T}_3$ with depth $k$. 
		We say that a scale $1 \leq j \leq k-1$ is $\alpha$-\emph{good} for $\ii$ with respect to $\left( \Rr[\ii]\right)_{\ii \in \mathbb{T}_3}$ if $j$ is odd, if $i_j=i_{j+1}=3$, and if furthermore
	\begin{equation}\label{eqn:condition_alpha_good}
	\min \left( \left| \Rr[\ii_j 1] \right|, \left| \Rr[\ii_j 2] \right|, \left| \Rr[\ii_j 3] \right| \right) \geq \alpha \left| \Rr[\ii_j] \right|.
	\end{equation}
\end{definition}
The reason why we want $i_j=3$ is that it implies that there are two small regions $\Rr[\ii_j 2], \Rr[\ii_j 3]$ whose boundary is the singleton $\{ b[\ii_j] \}$ (see Figure~\ref{fig:one step recursive decomposition}), and the ratio between the masses of those regions gives a convenient way to say that $\Tt$ and $\Tt'$ have "different shapes" at a certain scale.
The point of the assumption~\eqref{eqn:condition_alpha_good} is that if the branching point $b[\ii]$ splits the region $\Rr[\ii]$ in a very uneven way, there will be many possible choices for $\Psi(b[\ii])$, whereas if $b[\ii]$ is "central" in $\Rr[\ii]$ only few choices will be available.

The first step is to make sure that around most points of the tree $\Tt$, good scales represent a positive proportion of the scales.
\begin{lemma}\label{lem_many_good_scales}
	Let $\left( \Tt, \left( \Rr[\ii] \right)_{\ii \in \mathbb{T}_3} \right)$ be a decomposed Brownian tree. 
	There exists constants $\alpha, \beta, \gamma>0$ such that denoting
	\begin{align*}
		U^k:= \enstq{\ii \in \mathbb{T}_3^k}{\ii \mbox{ has at least $\beta k$ scales that are $\alpha$-good}}
	\end{align*}
and $E^k:=\bigcup_{\ii \in \mathbb{T}_3^k \backslash U^k} \Rr[\ii]$, then almost surely, for $k$ large enough, we have
$|E^k| < e^{-\gamma k }$.
\end{lemma}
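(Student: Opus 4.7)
The plan is to reinterpret $|E^k|$ as a probability over an independent uniform point $X \in \Tt$, exploit the i.i.d.\ structure along $(\ii_j(X))_{j \geq 0}$ given by Lemma~\ref{lem_independence_pointed_decomposition}, apply an upper-tail Chernoff bound to obtain an exponentially small expectation, and finally upgrade to an almost sure statement via Markov and Borel--Cantelli.

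Fix $\alpha > 0$ (to be chosen) and let $X$ be sampled uniformly on $\Tt$ independently of the decomposition. Denoting by $N_k$ the number of $\alpha$-good scales $j \in \{1, \dots, k-1\}$ for the word $\ii_k(X)$, Remark~\ref{rem:mass in Tt as probability} gives
\[
|E^k| = \Pcsq{N_k \geq \beta k}{\Tt, (\Rr[\ii])_{\ii \in \mathbb{T}_3}},
\]
so after taking expectations $\Ec{|E^k|} = \Pc{N_k \geq \beta k}$. Now, by Lemma~\ref{lem_independence_pointed_decomposition}, the vectors
\[
V_j := \left( \frac{|\Rr[\ii_j(X)1]|}{|\Rr[\ii_j(X)]|}, \frac{|\Rr[\ii_j(X)2]|}{|\Rr[\ii_j(X)]|}, \frac{|\Rr[\ii_j(X)3]|}{|\Rr[\ii_j(X)]|}, i_{j+1}(X) \right), \qquad j \geq 0,
\]
are i.i.d. The indicator $G_j$ that scale $j$ is $\alpha$-good depends only on $V_{j-1}$ (which provides $i_j(X)$) and $V_j$ (which provides $i_{j+1}(X)$ together with the three children's relative masses). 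Restricting to odd $j$, the pairs $(V_0, V_1), (V_2, V_3), (V_4, V_5), \dots$ are disjoint, hence $(G_j)_{j \text{ odd}}$ are i.i.d.\ Bernoulli with parameter
\[
p(\alpha) \;:=\; \frac{1}{3} \cdot \Ec{W_3 \indicator{\min(W_1, W_2, W_3) \geq \alpha}}, \qquad (W_1, W_2, W_3) \sim \Dir\left(\tfrac{1}{2}, \tfrac{1}{2}, \tfrac{1}{2}\right),
\]
where the size-biasing in~\eqref{eq:dirichlet distrib size-biaising} lets us rewrite $\Pc{i_{j+1}(X)=3, \min W_j \geq \alpha}$ as a Dirichlet expectation. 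A direct check shows $p(\alpha) \in (0, 1/9)$ for every $\alpha \in (0, 1/3)$, with $p(\alpha) \to 1/9$ as $\alpha \to 0^+$.

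Pick any such $\alpha$ and fix $\beta \in (p(\alpha)/2,\, p(\alpha))$. Then $N_k = \sum_{j \text{ odd},\, 1 \leq j \leq k-1} G_j$ is a sum of $\lfloor k/2 \rfloor$ i.i.d.\ Bernoulli$(p(\alpha))$'s whose mean is asymptotic to $p(\alpha)\, k/2 < \beta k$. The standard upper-tail Chernoff bound yields a constant $\gamma_0 = \gamma_0(\alpha, \beta) > 0$ such that $\Pc{N_k \geq \beta k} \leq e^{-\gamma_0 k}$, hence $\Ec{|E^k|} \leq e^{-\gamma_0 k}$. Markov's inequality then gives $\Pc{|E^k| \geq e^{-\gamma_0 k / 2}} \leq e^{-\gamma_0 k / 2}$, and since $\sum_k e^{-\gamma_0 k / 2} < \infty$, the Borel--Cantelli lemma ensures that almost surely, for all $k$ large enough, $|E^k| < e^{-\gamma_0 k/2}$. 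Setting $\gamma := \gamma_0/2$ completes the argument.

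\emph{Main obstacle.} The only delicate point is identifying the correct independence structure: since the good-scale indicator $G_j$ depends on two consecutive vectors $V_{j-1}, V_j$, the i.i.d.\ Bernoulli family is obtained only by restricting to odd $j$, where the pairs of vectors used become disjoint. Once this observation is in place, the proof reduces to a routine combination of Chernoff, Markov, and Borel--Cantelli.
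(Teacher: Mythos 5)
Your mechanics are exactly the paper's: you reinterpret $|E^k|$ as a probability over a uniform point $X$ (Remark~\ref{rem:mass in Tt as probability}), use Lemma~\ref{lem_independence_pointed_decomposition} to find i.i.d.\ structure, observe that the good-scale indicator at an odd scale $j$ depends only on the disjoint pair $(V_{j-1},V_j)$, apply a Chernoff bound, and finish with Markov and Borel--Cantelli. Your expression $p(\alpha)=\tfrac13\Ec{W_3\indicator{\min(W_1,W_2,W_3)\geq\alpha}}$ is, by symmetry of the Dirichlet law, identical to the paper's $\tfrac19\Pp{\min(W_1,W_2,W_3)\geq\alpha}$.

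There is, however, a substantive mismatch. You took the statement at face value ($U^k$ is the set of words with \emph{at least} $\beta k$ good scales), chose $\beta\in(p/2,p)$ so that $\beta k$ exceeds the mean of $N_k$, and bounded the \emph{upper} tail $\Pp{N_k\geq\beta k}$. This correctly proves that words with \emph{many} good scales are rare, i.e.\ that good scales are scarce along a typical path --- but that is the opposite of what the lemma is meant to (and does) assert. The paper's own proof computes $\Ec{|E^k|}=\Pp{\ii_k(X)\text{ has less than }\beta k\text{ good scales}}$, and the quantitative section chooses $\beta<p/2$, both of which only make sense if $U^k$ consists of words with \emph{fewer} than $\beta k$ good scales; this is also exactly how the lemma is invoked in the proof of Proposition~\ref{prop_homeo_mass} ("$\ii\in U^k$, meaning that $\ii$ has less than $\beta k$ good scales"). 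In other words, the statement of the lemma contains a typo ("at least" should be "at most" or "fewer than"), and your argument, taken as a proof of the intended result, bounds the wrong tail. The fix is small but essential: set $\beta<p/2$ and apply the \emph{lower}-tail Chernoff bound to $\Pp{N_k<\beta k}$; everything else in your write-up then goes through verbatim.
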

\begin{proof}
	Let us prove that we can find $\alpha,\beta,\gamma$ such that $\Ec{|E^k|}<e^{-2\gamma k}$, so that the conclusion of the lemma can be obtained using the Markov inequality and the Borel--Cantelli lemma. 
	Let $X$ be a uniform point taken under the mass measure on $\Tt$ independently of the decomposition.
	By Remark~\ref{rem:mass in Tt as probability}, we can rewrite $\Ec{|E^k|}$ in the following way:
	\begin{align*}
		\Ec{|E^k|}
		= \Pp{\ii_k(X) \mbox{ has less than $\beta k$ scales that are $\alpha$-good}}.
	\end{align*}
	 As in the proof of Lemma~\ref{lem_independence_pointed_decomposition}, for $k \geq 0$, we denote by $\mathcal{G}_k$ the $\sigma$-algebra generated by the word $\ii_k(X)$ and by the masses $\Rr[\ii_j (X) a]$ for $0 \leq j \leq k-1$ and $a \in \{1,2,3\}$. It is clear that the event that $j$ is a good scale for $\ii_k(X)$ belongs to $\mathcal{G}_{j+1}$.
	
	Moreover, by Lemma~\ref{lem_independence_pointed_decomposition}, for every odd $j$, we have
	\begin{align} \label{eq:probability that next scale is good}
	\Ppsq{\mbox{$j$ is $\alpha$-good for $X$}}{ \mathcal{G}_{j-1}} = \frac{1}{3} \cdot \frac{1}{3} \cdot \Pp{ \min \left( W_1, W_2, W_3 \right) \geq \alpha}, 
\end{align}
	where $(W_1,W_2,W_3) \sim \Dir \left( \frac12, \frac12, \frac12 \right)$.
	This probability is deterministic and does not depend on $j$ so we denote it by $p$, and note that $p>0$ if $\alpha>0$ was chosen small enough.	
	Therefore, the variables 
	\begin{align*}
		\left( \mathbbm{1}_{\{\mbox{the scale $2j+1$ is $\alpha$-good for $X$}\}} \right)_{0 \leq j < \frac{k}{2}}
	\end{align*}
	are i.i.d.\ Bernoulli variables with positive mean, which is sufficient to prove the lemma by a Chernoff bound.
\end{proof}
From now on, we will fix $\alpha, \beta, \gamma>0$ that satisfy the conclusion of Lemma~\ref{lem_many_good_scales} and we will simply refer to an $\alpha$-good scale as a \emph{good} scale.

Now let $0<\delta<\alpha$ and let $\Psi: \Tt \to \Tt'$ be a homeomorphism.
\begin{definition}\label{defn_mismatch}
Let $k \geq 0$ and $\ii \in \mathbb{T}_3^k$. We say that a scale $1 \leq j \leq k-1$ is a $\delta$-\emph{mismatch} for $\Psi$ at $\ii$ with respect to $\left( \Rr[\ii] \right)_{\ii \in \mathbb{T}_3}$ if it is a good scale and if we have
\[ \max_{1 \leq a \leq 3} \left| \frac{\left| \Rr[\ii_j a] \right|}{\left| \Rr[\ii_j] \right|}-\frac{\left| \Psi \left( \Rr[\ii_j a] \right) \right|}{\left| \Psi \left( \Rr[\ii_j] \right) \right|}  \right|>\delta. \]
\end{definition}
Since $\Tt$, $(\Rr[\ii])_{\ii \in \mathbb{T}_3}$ and $\delta$ are fixed throughout the paper, we will often simply write "mismatch for $\Psi$ at $\ii$" instead of "$\delta$-\emph{mismatch} for $\Psi$ at $\ii$ with respect to $\left( \Rr[\ii] \right)_{\ii \in \mathbb{T}_3}$".
Informally, the scale $j$ being a mismatch for $\Psi$ at $\ii$ indicates that when we perform one step of the Aldous recursive decomposition in $\Tt$, the decomposition of $\Rr[\ii_j]$ into three parts and its image by $\Psi$ in $\Psi \left( \Rr[\ii_j] \right)$ do not split the masses with the same proportions. 

Our next goal is the following result, which guarantees the existence of mismatches around most points for any homeomorphism. 
It represents a significant proportion of the proof of Proposition~\ref{prop_homeo_mass}.
\begin{proposition}\label{prop_existence_mismatches}
	There exists $\delta>0$ such that almost surely, for $k$ large enough, for any $\ii \in \mathbb{T}_3^k$ that has at least $\beta k$ good scales, for any homeomorphism $\Psi$ from $\Tt$ to $\Tt'$, one of the following holds:
	\begin{enumerate}
		\item \label{it:prop:image region is small} either $\left| \Psi \left( \Rr[\ii] \right) \right| \leq e^{-k} \left| \Rr[\ii] \right|$,
		\item\label{it:prop:many mismatches} there are at least $\lfloor\frac{\beta}{2} k \rfloor$ good scales in $\{1,\dots,k-1\}$ that are $\delta$-mismatches for $\Psi$ at $\ii$.
	\end{enumerate}
\end{proposition}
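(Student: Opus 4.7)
The plan is a Borel--Cantelli argument over $k$: the goal is to show that for each $k$, the probability that the conclusion of the proposition fails is at most $e^{-ck}$ for some $c>0$. Fix $k$ and $\ii\in\mathbb{T}_3^k$ with at least $\beta k$ good scales, and condition on $\Tt$ together with its Aldous decomposition, which freezes the target proportions $p_a^{(j)}:=|\Rr[\ii_j a]|/|\Rr[\ii_j]|$ for every $j\in G$, where $G\subset\{1,\dots,k-1\}$ denotes the set of good scales for $\ii$. Since $|\mathbb{T}_3^k|=3^k$, controlling the failure probability for each fixed $\ii$ by $e^{-Ck}$ with $C>\log 3$ will yield the result via a union bound over $\ii$ and Borel--Cantelli over $k$.

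The key reformulation is to eliminate the quantifier over homeomorphisms $\Psi$. Given any homeomorphism $\Psi:\Tt\to\Tt'$, the data relevant to Definition~\ref{defn_mismatch} is the chain $(R_j,b_j)_{j=0}^{k-1}$ in $\Tt'$ defined by $R_j:=\Psi(\Rr[\ii_j])$ and $b_j:=\Psi(b[\ii_j])$: each $R_j$ is a region of $\Tt'$ delimited by at most two points, $b_j$ is an internal branching point of $R_j$, and $R_{j+1}$ is one of the three connected components of $R_j\setminus\{b_j\}$. Whether scale $j$ is a $\delta$-mismatch, as well as the condition $|R_k|>e^{-k}|\Rr[\ii]|$, depend only on this chain. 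It therefore suffices to bound the $\Tt'$-expectation of the number of chains $(R_j,b_j)_{j=0}^{k-1}$ that satisfy $|R_k|>e^{-k}|\Rr[\ii]|$ and are non-mismatches at at least $\beta k/2$ good scales.

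The heart of the argument is this chain-count estimate. The core probabilistic input is that for a fixed region $R$ of a Brownian tree and any target triple $(p_1,p_2,p_3)\in[\alpha,1]^3$, the expected number of branching points of $R$ whose mass proportions lie in the $\delta$-neighborhood of the target is of order $\delta$, with constant depending only on $\alpha$. This should follow from the absolute continuity of the joint density of proportions at branching points, via Proposition~\ref{prop_independence_decomposition} applied to $\Tt'$ combined with scale invariance of the Brownian tree. Iterating across the at least $\beta k/2$ matched good scales produces a factor $(C\delta)^{\beta k/2}$. At the remaining scales the branching point is unconstrained in proportions, but the constraint $|R_k|>e^{-k}|\Rr[\ii]|$, together with a recursive mass estimate in the spirit of Lemma~\ref{lem:rough_control_size_regions}, contributes only a bounded geometric factor per scale. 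Choosing $\delta$ small enough as a function of $\alpha$ and $\beta$ then makes the expected chain count exponentially small in $k$; Markov's inequality and Borel--Cantelli finish the proof.

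The main technical obstacle is rigorizing the chain-count estimate. The branching points of $\Tt'$ form a countable but unbounded set, chains form a continuous family, and the joint law of proportions across nested branching points must be tracked explicitly. Moreover, the chain $(R_j,b_j)$ is not a priori one of the distinguished chains in the Aldous decomposition of $\Tt'$, so the clean i.i.d.\ structure of Proposition~\ref{prop_independence_decomposition} does not apply directly; one instead conditions step by step on $R_j$ (a Brownian tree of known mass with one or two boundary points) and integrates against the distribution of the next internal branching point $b_j$ chosen inside $R_j$, tracking both the proportion constraint at matched scales and the shrink-ratio constraint enforced by $|R_k|>e^{-k}|\Rr[\ii]|$. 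Once this bookkeeping is in place, the per-scale estimates combine as outlined above.
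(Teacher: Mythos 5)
Your high-level skeleton (eliminate the quantifier over $\Psi$ by reducing to chains of nested branching points in $\Tt'$, then beat a union bound over $\ii$ and Borel--Cantelli over $k$) matches the paper's, but two steps do not go through as written. The critical one is your treatment of the unconstrained scales: you claim that at scales where no proportion-matching is imposed, the constraint $|R_k|>e^{-k}\left|\Rr[\ii]\right|$ contributes ``only a bounded geometric factor per scale''. This is false. A region $R_j$ of a Brownian tree contains \emph{infinitely} many branching points $b$ for which some component of $R_j\setminus\{b\}$ has mass above any fixed threshold (take any point $y$ well inside $R_j$: every one of the infinitely many branching points along a path towards $y$ has a component containing $y$, of macroscopic mass). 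Hence the family of chains you propose to count is infinite at each unconstrained scale, its expected cardinality diverges, and Markov's inequality gives nothing. The paper avoids this by recording the image branching points \emph{only} at a chosen set $J$ of $\lfloor\beta k/2\rfloor$ good scales: there, $\alpha$-goodness in $\Tt$ combined with the absence of a mismatch forces $\Psi(b[\ii_{j_h}])$ to be an $\frac{\alpha}{2}$-balanced branching point of the current region of $\Tt'$, a deterministic claim (at most $m_0/m$ branching points of relative size $\geq m$ in a region of mass $m_0$) caps the number of choices per step, and a ``scale sequence'' bookkeeping turns this into a global bound $K^{\ell}$ on the number of candidate sequences.

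The second issue is the direction of conditioning. You freeze $\Tt$ and integrate over $\Tt'$, which forces you to control the intensity measure of nested chains of branching points of $\Tt'$ with prescribed mass splits, together with the conditional law of the region hanging off a branching point selected by its split --- a Palm-type computation that you flag as the ``main technical obstacle'' but do not carry out, and to which Proposition~\ref{prop_independence_decomposition} gives no direct access. The paper does the opposite: it conditions on $\Tt'$, so that the candidate sequences form a finite deterministic family of size at most $K^{\ell}$, and then uses the genuinely i.i.d.\ $\mathrm{Beta}(\frac12,\frac12)$ ratios $\left|\Rr[\ii_j 2]\right|/(\left|\Rr[\ii_j 2]\right|+\left|\Rr[\ii_j 3]\right|)$ coming from the recursive decomposition of $\Tt$ to bound the probability of matching any fixed candidate sequence by $(2\delta/(\alpha\sqrt{\alpha}))^{\ell-2}$. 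To close your argument you would need both to restrict the chain to the matched good scales (to make the count finite) and to develop the branching-point intensity estimates; as written, the proof has a genuine gap.
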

\begin{proof}
	Let $\delta \in \intervalleoo{0}{\alpha}$, whose value will be specified later. 
	Let $k \geq 0$. 
	Let $C>0$ be the constant given by the lower bound in Lemma~\ref{lem:rough_control_size_regions}, and let $B^k$ be the event that for all $\ii \in \mathbb{T}_3$ of depth at most $k$, the branching point $b[\ii]$ satisfies $\left| b[\ii] \right|_{\Tt} \geq e^{-Ck}$. 
	By Lemma~\ref{lem:rough_control_size_regions} (applied to $k+1$), almost surely $B^k$ occurs for $k$ large enough.
	
	Now let $\ii \in \mathbb{T}_3^k$, and let $J$ be a subset of $\{ 1,2,\dots, k-1\}$ with size $\lfloor \frac{\beta}{2} k \rfloor$ and with only odd elements. 
	We are interested in the existence of a homeomorphism $\Psi$ that would satisfy the three following properties:
	\begin{enumerate}[(i)]
		\item\label{it:homeo:large enough branching points}  $| b[\ii_j]|_{\Tt} \geq e^{-C k}$ for all $0 \leq j \leq k$,
		\item\label{it:homeo:image region large enough} we have $\left| \Psi \left( \Rr[\ii] \right) \right| > e^{-k} \left| \Rr[\ii] \right|$,
		\item\label{it:homeo:not a mismatch} for all $j \in J$, the scale $j$ is good but is not a $\delta$-mismatch for $\Psi$ at $\ii$.
	\end{enumerate}
We insist that in item (i), we mean $e^{-Ck}$ and not $e^{-Cj}$. We hence define the event $A_J(\ii)$ as
	\begin{align*}
		A_J(\ii) = \big\{ \mbox{there exists a homeomorphism $\Psi:\mathcal{T} \to \mathcal{T}'$ such that (\ref{it:homeo:large enough branching points}), (\ref{it:homeo:image region large enough}), (\ref{it:homeo:not a mismatch})  hold for $\Psi$ and $\ii$}\big\}.
	\end{align*}
In the rest of the proof, whenever there exists a homeomorphism $\Psi$ such that (\ref{it:homeo:large enough branching points}), (\ref{it:homeo:image region large enough}), (\ref{it:homeo:not a mismatch})  hold for $\Psi$ and $\ii$ we will say that "$\Psi$ makes $A_J(\ii)$ occur". 

Now let us consider what happens on the event where $B^k$ occurs but none of the $A_J(\ii)$ does for any $\ii \in \mathbb{T}_3^k$ and any $J\subset \{ 1,2,\dots, k-1\}$ with $\#J=\lfloor \frac{\beta}{2} k \rfloor$. 
Fix some $\Psi:\Tt\rightarrow \Tt'$ and $\ii \in \mathbb{T}_3^k$ that has at least $\beta k$ good scales.
On the event considered, (\ref{it:homeo:large enough branching points}) is satisfied so either (\ref{it:homeo:image region large enough}) fails (and in this case point \ref{it:prop:image region is small}. of the proposition holds); or (\ref{it:homeo:image region large enough}) holds for $\Psi$ and $\ii$, which entails that (\ref{it:homeo:not a mismatch}) fails for all choices of $J$. 
Since we assumed that $\ii$ has more than $\beta k$ good scales, the fact that we cannot find any  $\lfloor \frac{\beta}{2}k\rfloor$ good scales that are not $\delta$-mismatches tells us that at least $\beta k - \lfloor \frac{\beta}{2}k\rfloor\geq \lfloor\frac{\beta}{2}k\rfloor$ of them are indeed $\delta$-mismatches.  
In view of this, since we already know that $B^k$ occurs for $k$ large enough, it suffices to find $\delta>0$ such that almost surely, none of the $A_J(\ii)$ with $\ii \in \mathbb{T}_3^k$ and $J \subset \{1,\dots,k\}$ of cardinal $\lfloor \frac{\beta}{2}k \rfloor$ occurs for $k$ large enough. For that, we will show using a union bound that the probability of
	\begin{equation}\label{eqn_rewriting_event_Aij}
		\bigcup_{\ii \in \mathbb{T}_3^k} \bigcup_{\substack{J \subset \{1,\dots,k-1\}\\ \#J=\lfloor\frac{\beta}{2} k\rfloor }} A_J(\ii)
		\end{equation}
	is summable in $k$, and conclude with the Borel--Cantelli lemma.
	
	Suppose that there exists some $\Psi$ that makes the event $A_J(\ii)$ occur.	
	We write $\ell=\lfloor \frac{\beta}{2}k \rfloor$ and denote by $j_1 < j_2 < \dots < j_{\ell}$ the elements of $J$, and write $b_h=b[\ii_{j_{h}}]$ for all $1 \leq h \leq \ell$. 
	We now try to understand the sequence of points
	\[ \mathbf{c}=(c_h)_{1 \leq h \leq \ell} = \left( \Psi(b_h) \right)_{1 \leq h \leq \ell}.\]
	We first note that this sequence must satisfy a topological condition: we recall that $i_{j_h}=i_{j_h+1}=3$ because the scale $j_h$ is good. Hence, for all $1 \leq h \leq \ell$, the point $b_h$ is a branching point of $\mathcal{T}$ with the points $b_1, \dots, b_{h-1}$ in the same connected component of $\mathcal{T} \backslash \{b_h\}$, and the points $b_{h+1}, \dots, b_{\ell}$ in another component. 
	Since $\Psi$ is a homeomorphism, the same is true for the sequence $(c_h)$ in $\mathcal{T}'$. Moreover, we claim that the sizes of the branching points $c_h$ cannot be too small. 
	More precisely, for all $1 \leq h \leq \ell-1$, let $\Rr'_h(\mathbf{c})$ be the connected component of $\mathcal{T}' \backslash \{ c_h\}$ that contains $c_{h+1}, \dots, c_{\ell}$, with the convention that $\Rr'_0(\mathbf{c})=\mathcal{T}'$. 
	In other words, we have $\Rr'_h(\mathbf{c})=\Psi \left( \Rr[\ii_{j_h+1}] \right)$. 
	Then for all $1 \leq h \leq \ell$, we have 
	\[ |c_h|_{\Rr'_{h-1}(\mathbf{c})} = |c_h|_{\Psi \left( \Rr[\ii_{j_{h-1}+1}]
	 \right)} \geq |c_h|_{\Psi \left( \Rr[\ii_{j_{h}}] \right)} = \min_{a \in \{1,2,3\}} \left| \Psi \left( \Rr[\ii_{j_h} a] \right) \right|.\]
	Using successively the fact that $j_h$ is not a $\delta$-mismatch and the fact that the scale $j_h$ is $\alpha$-good, we deduce, for $1 \leq h \leq \ell-1$:
	\begin{equation}\label{eqn_ci_not_too_small}
	|c_h|_{\Rr'_{h-1}(\mathbf{c})} \geq \left( \frac{\min_{a \in \{1,2,3\}} \left| \Rr[\ii_{j_h} a] \right| }{\left| \Rr[\ii_{j_h}] \right|} -\delta \right) \cdot \left| \Psi \left( \Rr[\ii_{j_h}] \right) \right| \geq (\alpha-\delta) \left| \Rr'_h(\mathbf{c}) \right|.
	\end{equation}
	From now on, we assume $\delta<\frac{\alpha}{4}$, so that
	\begin{align*}
	|c_h|_{\Rr'_{h-1}(\mathbf{c})} \geq \frac{\alpha}{2} \left| \Rr'_h(\mathbf{c}) \right|
	\end{align*}
	for $1 \leq h \leq \ell-1$. 
	For the same reasons, we also have
	\begin{align*}
		|c_{\ell}|_{\Rr'_{\ell-1}(\mathbf{c})} \geq \frac{\alpha}{2} \left| \Psi \left( \Rr[\ii_{j_{\ell}}]\right) \right| \geq \frac{\alpha}{2} \left| \Psi \left( \Rr[\ii] \right) \right| 
	\end{align*}
	so, using (\ref{it:homeo:image region large enough}) and (\ref{it:homeo:large enough branching points}) in the definition of the event $A_J(\ii)$, we have
	\begin{align*}
	|c_{\ell}|_{\Rr'_{\ell-1}(\mathbf{c})} \geq \frac{\alpha}{2} e^{-k} \left| \Rr[\ii] \right| \geq \frac{\alpha}{2} e^{-(C+1)k}. 
	\end{align*}
	Following what precedes, we define a \emph{candidate sequence of length $\ell$} as a sequence $(c_h)_{1 \leq h \leq \ell}$ of branching points of $\mathcal{T}'$ such that:
	\begin{itemize}
		\item for all $1 \leq h \leq \ell$, the points $c_1, \dots, c_{h-1}$ all lie in one connected component of $\mathcal{T}' \backslash \{c_h\}$, and the points $c_{h+1}, \dots, c_{\ell}$ lie in another one, denoted if $h<\ell$ by $\Rr'_h(\mathbf{c})$;
		\item for all $1 \leq h \leq \ell-1$, we have $|c_h|_{\Rr'_{h-1}(\mathbf{c})} \geq \frac{\alpha}{2} \left| \Rr'_{h}(\mathbf{c}) \right|$;
		\item we have $|c_{\ell}|_{\Rr'_{\ell-1}(\mathbf{c})} \geq \frac{\alpha}{2}e^{-(C+1)k}$.
	\end{itemize}
Note that crucially, the notion of candidate sequence does not depend on $\delta$. By the discussion above, any $\Psi:\Tt\rightarrow \Tt'$ that makes the event $A_J(\ii)$ occur must send $(b_h)_{1\leq h \leq \ell}$ to some candidate sequence of length $\ell$ in $\Tt'$.
	We will now provide a bound on the number of such candidate sequences in $\Tt'$. 
	This bound is entirely deterministic, and only uses the fact that $\mathcal{T}'$ is a real tree with total mass $1$ where all branching points have degree $3$, which is almost surely the case for a realization of a Brownian tree.
	\begin{lemma}\label{lem_counting_candidates}
		There exists a constant $K=K(C,\alpha, \beta)$ such that almost surely, for all $\ell \geq 2$, the number of candidate sequences of length $\ell$ in $\mathcal{T}'$ is at most $K^{\ell}$.
	\end{lemma}
\begin{figure}
	\centering
	\includegraphics[page=1]{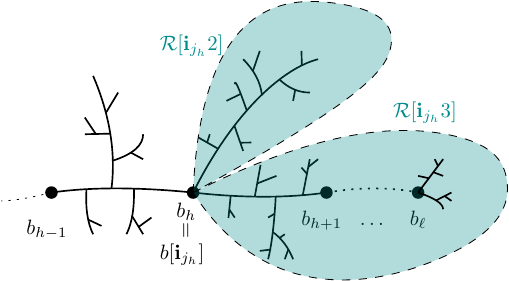}
	\caption{The sequence $(b_h)_{1\leq h \leq \ell}$ and their positions relative to $\Rr[\ii_{j_h}2]$ and $\Rr[\ii_{j_h}3]$.}
	\label{fig:region containing end of branchpoint sequence}
\end{figure}
	\begin{proof}
		If $\mathbf{c}=(c_h)_{1 \leq h \leq \ell}$ is a candidate sequence, we first define its \emph{sequence of scales} $\left( s_h(\mathbf{c}) \right)_{0 \leq h \leq \ell}$ by
		\[ s_h(\mathbf{c}) = \lfloor -\log \left| \Rr'_h(\mathbf{c}) \right| \rfloor \]
		for $1 \leq h \leq \ell-1$, with the conventions $s_0(\mathbf{c})=0$ and $s_{\ell}(\mathbf{c})=\lfloor -\log |c_{\ell}|_{\Rr'_{\ell-1}(\mathbf{c})} \rfloor$. 
		Then $\mathbf{s}(\mathbf{c})=\left( s_h(\mathbf{c}) \right)_{1 \leq h \leq \ell}$ is a non-decreasing sequence of integers starting at $0$.
		Moreover, by definition of a candidate sequence, the numbers $s_h(\mathbf{c})$ are bounded above by
		\[ -\log \left( \frac{\alpha}{2} e^{-(C+1)k}\right) = -\log \frac{\alpha}{2} + (C+1) k.\]
		Therefore, the number of possible values of the sequence $\left( s_h(\mathbf{c}) \right)_{0 \leq h \leq \ell}$ is at most
		\begin{equation}\label{eqn_number_scale_sequences}
			\binom{\log (2/\alpha) + (C+1)k + \ell}{\ell} \leq \frac{2}{\alpha} 2^{(C+2)k}.
		\end{equation}
		On the other hand, for any such sequence $\mathbf{s}=(s_h)_{0 \leq h \leq \ell}$, let us bound the number of candidate sequences $\mathbf{c}$ for which the scale sequence $\mathbf{s}(\mathbf{c})$ is $\mathbf{s}$. 
		For this, we start with an easy remark. 
		Let $\Rr$ be a region of $\mathcal{T}$ of mass $m_0$. 
		Then we claim the following
		\begin{claim*}
		The number of branching points $c$ in $\Rr$ satisfying $|c|_{\Rr} \geq m$ is at most $\frac{m_0}{m}$. 
		\end{claim*}
		Indeed, let $S_m$ be the set of branching points of size at least $m$ in $\Rr$. 
		Then $\Rr$ can be obtained by gluing the connected components of $\Rr \backslash S_m$ along the structure of a finite binary tree. 
		The nodes of this binary tree correspond to the points of $S_m$, so this binary tree has $\# S_m$ nodes and therefore $\# S_m+2$ leaves. 
		Those $\# S_m+2$ leaves correspond to $\# S_m+2$ disjoint parts of $\Rr$ with mass at least $m$ each, so $\left( \# S_m+2 \right) m \leq m_0$ and the claim follows.
		
		Now, let $\mathbf{s}=(s_h)_{0 \leq h \leq \ell}$ be a non-decreasing sequence of integers with $s_0=0$ and $s_{\ell} \leq \log \frac{2}{\alpha} +(C+1)\ell$. 
		Let us build step by step a candidate sequence $\mathbf{c}$ satisfying $\mathbf{s}(\mathbf{c})=\mathbf{s}$:
		\begin{itemize}
			\item For $\mathbf{c}$ to be a candidate sequence, $c_1$ needs to satisfy 
			\begin{align*}
				|c_1|_{\Rr'_{0}(\mathbf{c})}=|c_1|_{\Tt} \geq \frac{\alpha}{2}\cdot |\Rr'_{1}(\mathbf{c})| \geq \frac{\alpha}{2}\cdot e^{-(s_1(\mathbf{c})+1)}
			\end{align*}
			by definition of $s_1(\mathbf{c})$. 
			Using the last display and the claim, the number of possible choices for $c_1$ for which $s_1(\mathbf{c})=s_1$ is at most $\frac{2}{\alpha} e^{s_1+1}$. 
			\item Let $1 \leq h \leq \ell-1$ and assume that $c_1, \dots, c_{h-1}$ have already been chosen. 
			Then $\Rr'_{h-2}(\mathbf{c})$ is determined by $(c_1, \dots, c_{h-1})$ and $\Rr'_{h-1}(\mathbf{c})$ is a connected component of $\Rr'_{h-2}(\mathbf{c}) \backslash \{ c_{h-1} \}$, so there are only $3$ possible choices for $\Rr'_{h-1}(\mathbf{c})$.
			Moreover, once this region has been chosen, the point $c_h$ must be a point of $\Rr'_{h-1}(\mathbf{c})$ with 
			\[
			|c_h|_{\Rr'_{h-1}(\mathbf{c})} \geq \frac{\alpha}{2} \left| \Rr_h'(\mathbf{c}) \right| \geq \frac{\alpha}{2} e^{-(s_h(\mathbf{c})+1)} 
			\]
			by definition of $s_h(\mathbf{c})$. 
			Using the claim again and the fact that $|\Rr'_{h-1}(\mathbf{c})| \leq e^{-s_{h-1}(\mathbf{c})}=e^{-s_{h-1}}$ by construction, 
			the number of possible choices for $c_h$ that ensure that $s_h(\mathbf{c})=s_h$, given $c_1, \dots, c_{h-1}$, is bounded above by
			\[ 
			3 \cdot \frac{\left| \Rr'_{h-1}(\mathbf{c}) \right|}{|c_h|_{\Rr'_{h-1}(\mathbf{c})}} \leq \frac{6}{\alpha} e^{s_h-s_{h-1}+1}. 
			\]
			\item Finally, by the same reasoning, the number of possible choices for $c_{\ell}$ given $c_1, \dots, c_{\ell-1}$ is bounded above by
			\[ \frac{6}{\alpha} e^{(C+1)\ell-s_{\ell-1}}.\]
		\end{itemize}
		Using the above in cascade and reducing the telescopic product we get that, for any $\mathbf{s}$, the number of candidate sequences $\mathbf{c}$ such that $\mathbf{s}(\mathbf{c})=\mathbf{s}$ is bounded above by
		\[
		 \left( \frac{6}{\alpha} \right)^{\ell} e^{(C+2)\ell}.
		 \]
		Combined with~\eqref{eqn_number_scale_sequences}, this proves the lemma, with
		\begin{align}\label{eq:formula for K} 
		K=\frac{6}{\alpha} \cdot e^{C+2} \cdot  2^{\frac{2C+4}{\beta}}.
		\end{align}
	\end{proof}

		We return to the proof of Proposition~\ref{prop_existence_mismatches}.
		From now on, we will work conditionally on the tree $\mathcal{T'}$ and fix $J,\ii$. 
		We recall that $j_1<\dots<j_{\ell}$ are the elements of $J$ and that $b_h=b[\ii_{j_h}]$. 
		We have seen that if there exists a $\Psi$ that makes the event $A_J(\ii)$ occur, then the sequence $\left( \Psi(b_h) \right)_{1 \leq h \leq \ell}$ is a candidate sequence in $\mathcal{T}'$. 
		Therefore, we fix such a candidate sequence $(c_h)_{1 \leq h \leq \ell}$, and estimate the probability that there exists a $\Psi$ satisfying (\ref{it:homeo:large enough branching points}), (\ref{it:homeo:image region large enough}), (\ref{it:homeo:not a mismatch})  as well as $\Psi(b_h)=c_h$ for all $1 \leq h \leq \ell$. 
		For this, let $2 \leq h \leq \ell-1$. 
		On the event that such a $\Psi$ exists, since $j_h$ is not a $\delta$-mismatch for $\ii$, we have
		\begin{equation}\label{eqn:absence_mismatch}
		\left( \frac{ \left| \Psi \left( \Rr[\ii_{j_h}2] \right) \right|}{\left| \Psi \left( \Rr[\ii_{j_h}] \right) \right|}-\delta \right) \left|\Rr[\ii_{j_h}] \right| \leq \left| \Rr[\ii_{j_h}2] \right| \leq \left( \frac{ \left| \Psi \left( \Rr[\ii_{j_h}2] \right) \right|}{\left| \Psi \left( \Rr[\ii_{j_h}] \right) \right|}+\delta \right) \left|\Rr[\ii_{j_h}] \right|,
		\end{equation}
		and a similar estimate holds for $\left| \Rr[\ii_{j_h}3] \right|$. 
		
		On the other hand, by our conventions in the construction of the recursive decomposition and the fact that $i_{j_h}=i_{j_h+1}=3$ (since $j_h$ is a good scale for $\ii$), the connected component of $\Tt \backslash\{ b_h \}$ that contains $b_{\ell}$ is $\Rr[\ii_{j_h}3]$, and the component that contains neither $b_1$ nor $b_{\ell}$ is $\Rr[\ii_{j_h}2]$, see Figure~\ref{fig:region containing end of branchpoint sequence}.
		Hence $\Psi \left( \Rr[\ii_{j_h}2] \right)$ must be the connected component of $\Tt' \backslash \{c_h\}$ that contains neither $c_1$ nor $c_{\ell}$, and $\Psi \left( \Rr[\ii_{j_h}3] \right)$ must be the connected component of $\Tt' \backslash \{c_h\}$ that contains $c_{\ell}$. 
		We denote by $n^h_2:=|\Psi \left( \Rr[\ii_{j_h}2] \right)|$ and $n^h_3:=|\Psi \left( \Rr[\ii_{j_h}3] \right)|$ the respective masses of these two connected components, and highlight that those masses are completely determined by the sequence $\mathbf{c}$. 
		From~\eqref{eqn:absence_mismatch} and the analog equation for $\Rr[\ii_{j_h}3]$, using that $(x,y) \mapsto \frac{x}{x+y}$ is increasing in $x$ and decreasing in $y$, we get		
		\begin{equation}\label{eqn:absence_mismatch_23}
		\frac{n^{h}_2 - \delta \left| \Psi \left( \Rr[\ii_{j_h}] \right) \right|}{n^{h}_2+n^{h}_3} \leq \frac{\left| \Rr[\ii_{j_h}2] \right|}{\left| \Rr[\ii_{j_h}2] \right|+\left| \Rr[\ii_{j_h}3] \right|} \leq \frac{n^{h}_2+\delta \left| \Psi \left( \Rr[\ii_{j_h}] \right) \right|}{n^{h}_2+n^{h}_3}.
		\end{equation}
		Since the scale $j_h$ is $\alpha$-good and we have chosen $\delta<\frac{\alpha}{2}$, we have as in~\eqref{eqn_ci_not_too_small} that $n^{h}_2, n^h_3 \geq \frac{\alpha}{2} \left| \Psi \left( \Rr[\ii_{j_h}] \right) \right|$, so $n_2^h+n_3^h \geq \alpha \left| \Psi \left( \Rr[\ii_{j_h}] \right) \right|$ and~\eqref{eqn:absence_mismatch_23} becomes
		\[ \left| \frac{\left| \Rr[\ii_{j_h}2] \right|}{\left| \Rr[\ii_{j_h}2] \right|+\left| \Rr[\ii_{j_h}3] \right|}-\frac{n^{h}_2}{n^{h}_2+n^{h}_3} \right| \leq \frac{\delta}{\alpha} \]
		for all $2\leq h \leq  \ell-1$. On the other hand, by Proposition~\ref{prop_independence_decomposition}, the variables $\frac{\left| \Rr[\ii_{j}2] \right|}{\left| \Rr[\ii_{j}2] \right|+\left| \Rr[\ii_{j}3] \right|}$ for $j \geq 0$ are i.i.d.\ copies of $\frac{W_2}{W_2+W_3}$, where $(W_1, W_2, W_3) \sim \Dir \left( \frac12, \frac12, \frac12 \right)$. 
		It follows that we have
		\[ \Ppsq{\exists \Psi \text{  that makes  $A_J(\ii)$ occur and }\forall \, 2 \leq h \leq \ell-1, \Psi(b_h)=c_h}{ \mathcal{T'}} \leq \prod_{h=2}^{\ell-1} \Pp{ \left| \frac{W_2}{W_2+W_3}-q_h \right| \leq \frac{\delta}{\alpha} },  \]
		where $q_h=\frac{n^{h}_2}{n^{h}_2+n^{h}_3} \in \left[ \frac{\alpha}{2}, 1-\frac{\alpha}{2} \right]$. 
		On the other hand, by \eqref{eq:dirichlet distrib factorisation}, the law of $\frac{W_2}{W_2+W_3}$ is $\mathrm{Beta}\left(\frac{1}{2},\frac{1}{2}\right)$
		with density
		\(\frac{1}{\pi} \frac{1}{\sqrt{t(1-t)}} \mathbbm{1}_{[0,1]}(t) \, \mathrm{d}t.\)
		In particular, the density is bounded on $\left[ \frac{\alpha}{4}, 1-\frac{\alpha}{4} \right]$ by $\frac{1}{\pi \sqrt{ \frac{1}{2} \cdot \frac{\alpha}{4}}}\leq \frac{1}{\sqrt{\alpha}}$. 
		Therefore, integrating between $q_h-\frac{\delta}{\alpha}$ and $q_h+\frac{\delta}{\alpha}$, assuming that $\delta<\frac{\alpha^2}{4}$ we have
		\[ 
		\Ppsq{\exists \Psi \text{  that makes  $A_J(\ii)$ occur and }\forall \, 2 \leq h \leq \ell-1, \Psi(b_h)=c_h}{ \mathcal{T'}} \leq \left( \frac{2\delta}{\alpha\sqrt{\alpha}} \right)^{\ell-2}.\]
		We can now take the union bound over all candidate sequences $(c_h)$. By Lemma~\ref{lem_counting_candidates}, we obtain
		\[ \Ppsq{A_J(\ii)}{ \mathcal{T'}} \leq K^{\ell} \left( \frac{2\delta}{\alpha\sqrt{\alpha}} \right)^{\ell-2}.\]
		We can now remove the conditioning on $\mathcal{T'}$ and perform a union bound over the $3^k$ possible values of $\ii$ and the $\binom{k-1}{\ell} \leq 2^k$ possible values of the set $J$. 
		We find
		\begin{align}\label{eq:how small delta has to be}
				\Pp{\bigcup_{\ii\in \mathbb{T}_3^k} \bigcup_{\substack{J \subset \{1,\dots,k-1\}\\ \#J=\ell}} A_J(\ii)} \leq 6 ^k \cdot \left( \frac{2\delta K}{\alpha\sqrt{\alpha}} \right)^{\ell-2}.  
		\end{align} 
		Reminding that $\ell=\lfloor\frac{\beta k}{2}\rfloor$, if we choose the constant $\delta>0$ sufficiently small, this decays exponentially in $k$. 
		Given the discussion before \eqref{eqn_rewriting_event_Aij}, this proves the proposition.
\end{proof}

\subsection{The martingale argument}\label{subsec:martingale}
For the next part of the argument, we need to introduce another notion of mismatch that is slightly looser than the one of Definition~\ref{defn_mismatch}. 
From now on, we fix a value of $\delta>0$ that satisfies the conclusion of Proposition~\ref{prop_existence_mismatches}.
Suppose that $\left( \Tt, \left( \Rr[\ii] \right)_{\ii \in \mathbb T_3} \right)$ is a decomposed Brownian tree and $\Tt'$ is another Brownian tree, and that $\Psi:\Tt\rightarrow \Tt'$ is a homeomorphism.
\begin{definition}\label{defn:weak_mismatch}
	Let $j \geq 1$, and let $\ii \in \mathbb{T}_3$ of depth at least $j+1$. We say that the scale $j$ is a weak mismatch for $\Psi$ at $\ii$ if $\frac{\left| \Rr[\ii_j a] \right|}{\left| \Rr[\ii_j] \right|}\geq \alpha$ for all $a \in \{1,2,3\}$ and furthermore
	\[ \max_{a\in \{1,2,3\}} \left| \frac{|\Psi(\Rr[\ii_j a])|}{|\Psi(\Rr[\ii_j])|} - \frac{|\Rr[\ii_j a]|}{|\Rr[\ii_j]|} \right| > \delta. \]
\end{definition}

In particular, a mismatch as defined in Definition~\ref{defn_mismatch} is also a weak mismatch. 
The difference between the two definitions is that we have removed the "topological" part of the assumption that $j$ has to be a good scale, \emph{i.e.} that $i_j=i_{j+1}=3$ (see Definition~\ref{defn:good_scale}).
In this section, we prove the following result.
\begin{proposition}\label{prop:mass of bad regions is small}
Let $\beta$ be as in Lemma~\ref{lem_many_good_scales}. 
There exists a constant $\eta>0$ such that for any homeomorphism $\Psi$ from $\Tt$ to $\Tt'$, for any $k\geq 1$, the set
\begin{align*}
	V^{k,\Psi}:=\enstq{\ii \in \mathbb{T}_3^k}{\Psi \text{ has more than $\frac{\beta}{2} k$ weak mismatches at $\ii$ and }\frac{\left| \Psi \left( \Rr[\ii] \right) \right|}{\left| \Rr[\ii] \right|}  \geq e^{-\eta k}}
\end{align*}
is such that 
\begin{align*}
	\left|\bigcup_{\ii \in V^{k,\Psi}}\Rr[\ii]\right| < e^{-\eta k}.
\end{align*}
\end{proposition}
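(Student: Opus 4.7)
The plan is a Markov-plus-recursion argument based on the observation that the Bhattacharyya-type coefficient $\sqrt{|\Rr[\ii]|\cdot|\Psi(\Rr[\ii])|}$ telescopes along the Aldous decomposition as a product of local factors $\sqrt{p_a q_a}$, where $p_a$ and $q_a$ denote the relative mass splits of $\Rr[\ii]$ and $\Psi(\Rr[\ii])$ among their three sub-regions. Summed over $a \in \{1,2,3\}$ these local factors are always at most $1$, with a quantitative improvement to $1-\delta^2/8$ at weak-mismatch scales. This gain is what will convert ``many weak mismatches along $\ii$'' into exponential decay of mass.

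Concretely, fix constants $\eta>0$ and $\rho>1$ to be specified, and let $m_\ii$ be the number of weak mismatches for $\Psi$ at $\ii$. On $V^{k,\Psi}$ one has $|\Rr[\ii]|\leq e^{\eta k}|\Psi(\Rr[\ii])|$, so $|\Rr[\ii]|\leq e^{\eta k/2}\sqrt{|\Rr[\ii]|\cdot|\Psi(\Rr[\ii])|}$, and also $m_\ii\geq \beta k/2$, so $\rho^{m_\ii-\beta k/2}\geq 1$ for every $\rho\geq 1$. Combining these two bounds and enlarging the sum from $V^{k,\Psi}$ to all of $\mathbb{T}_3^k$ gives
\[
\left|\bigcup_{\ii\in V^{k,\Psi}}\Rr[\ii]\right|
\leq e^{\eta k/2}\,\rho^{-\beta k/2}\sum_{\ii\in\mathbb{T}_3^k}\rho^{m_\ii}\sqrt{|\Rr[\ii]|\cdot|\Psi(\Rr[\ii])|}.
\]

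The central step is to bound the right-hand sum by $1$. This is done by a top-down recursion on $\mathbb{T}_3$, using the multiplicative identity $\sqrt{|\Rr[\ii]|\cdot|\Psi(\Rr[\ii])|}=\prod_{j=0}^{k-1}\sqrt{p_{a_j}q_{a_j}}$ for $\ii = a_0\cdots a_{k-1}$. At each internal node one needs the bound
\[
\sum_{a=1}^3\sqrt{p_a q_a}=1-\tfrac12\sum_{a=1}^3(\sqrt{p_a}-\sqrt{q_a})^2\leq 1,
\]
which sharpens at a weak-mismatch scale to $\sum_a\sqrt{p_aq_a}\leq 1-\delta^2/8$, via $|\sqrt{p_{a^*}}-\sqrt{q_{a^*}}|\geq \delta/2$ for the index $a^*$ witnessing $|p_{a^*}-q_{a^*}|\geq\delta$ (using $\sqrt{p_{a^*}}+\sqrt{q_{a^*}}\leq 2$). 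Choosing $\rho:=1/(1-\delta^2/8)$ makes the $\rho$-factor at each weak-mismatch scale exactly cancel the gain $1-\delta^2/8$, while at non-mismatch scales there is no $\rho$-factor and the bound $\leq 1$ already suffices. The recursion then closes with total value at most $1$.

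Plugging this back yields $|\bigcup_{\ii\in V^{k,\Psi}}\Rr[\ii]|\leq e^{\eta k/2}\rho^{-\beta k/2}$, and taking $\eta := \beta\delta^2/24$ (or any smaller positive constant) gives the required bound $\leq e^{-\eta k}$. The main delicate point of this plan is the joint choice of $\eta$ and $\rho$: $\rho$ must be large enough that $\rho^{\beta k/2}$ dominates the Markov cost $e^{\eta k/2}$ and still leaves exponential decay, yet small enough that the recursive bound closes. The mismatch gain $1-\delta^2/8$ is precisely what lets both constraints be met simultaneously.
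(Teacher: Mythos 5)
Your proof is correct and is essentially the paper's own argument in deterministic disguise: the quantity $\sum_{\ii\in\mathbb{T}_3^k}\rho^{m_{\ii}}\sqrt{|\Rr[\ii]|\,|\Psi(\Rr[\ii])|}$ that your recursion bounds by $1$ is exactly the exponential moment $\mathbb{E}[\exp(\tfrac12\sum_{j}\widetilde{Z}_j)]$ computed in the paper along a uniformly sampled point (with $\rho=e^{\mu/2}$), your per-node estimate $\sum_a\sqrt{p_aq_a}\le 1-\delta^2/8$ at weak-mismatch scales is precisely Lemma~\ref{lem:small exponential moment for Z+c}, and your closing Markov step is the paper's Chernoff bound with the same trade-off between the cost $e^{\eta k/2}$ and the gain $\rho^{-\beta k/2}$. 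The argument is correctly executed, including the factorization of $m_{\ii}$ along prefixes that makes the top-down recursion legitimate.
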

We highlight that the result is in fact deterministic: it holds almost surely for realizations of Brownian trees $\Tt, \Tt'$, decomposition $(\Rr[\ii])_{\ii\in \mathbb{T}_3^k}$ and any homeomorphism $\Psi$.
In what follows, we will refer to weak mismatches as simply mismatches.
\begin{proof}
Although the result is deterministic, we will give a probabilistic proof
by interpreting the mass of a subset of $\Tt$ as the probability that a point $X$ sampled uniformly in $\Tt$ belongs to that subset, as explained in Remark~\ref{rem:mass in Tt as probability}.
In all the proof, we treat $\Tt, \Tt'$ as deterministic, compact real trees, equipped with a nonatomic mass measure, and we also treat $\left( \Rr[\ii] \right)_{\ii \in \mathbb{T}_3}$ as deterministic. 
We pick $X$ in $\Tt$ according to its mass measure. 
For $j \geq 0$, we denote by $\mathcal{H}_j$ the $\sigma$-algebra generated by $\ii_j(X)$, so that $(\mathcal H_j)_{j\geq 0}$ is a filtration. 
Note that since $X$ is uniform, we have for $j \geq 0$ and $a \in \{1,2,3\}$
\begin{equation}\label{eqn_law_j+1}
	\Ppsq{i_{j+1}(X)=a}{\mathcal H_j} = \frac{\left| \Rr[\ii_j(X) a] \right|}{\left| \Rr[\ii_j(X)] \right|}.
\end{equation}
We also note that if $0 \leq j \leq k$, then the event that scale $j$ is a mismatch for $\Psi$ at $\ii_k(X)$ is $\mathcal{H}_j$-measurable\footnote{This is the reason why we are looking at the weak mismatches of Definition~\ref{defn:weak_mismatch}, and not at the mismatches of Definition~\ref{defn_mismatch}: the assumption $i_{j+1}=3$ in the definition of a good scale is not $\mathcal{H}_j$-measurable.}, since it only depends on $\ii_j(X)$.
Now, for $j \geq 0$, we define 
\begin{equation}\label{eqn:martingale}
M_j:=\frac{\left| \Psi \left( \Rr[\ii_j(X)] \right) \right|}{\left| \Rr[\ii_j(X)] \right|}.
\end{equation}
A simple computation using~\eqref{eqn_law_j+1} shows that the process $(M_j)_{j \geq 0}$ is an $(\mathcal H_j)$-martingale.

The idea behind Proposition~\ref{prop:mass of bad regions is small} is that a mismatch gives an opportunity for $M_{j+1}$ to be significantly different from $M_j$. Since the martingale $M$ is positive, it converges almost surely, and if its value changes often the limit has to be $0$.
To obtain a quantitative version of this intuition, we will study $(\log M_j)_{j \geq 0}$, which is a supermartingale. We will use the fact that the steps corresponding to mismatches tend to bring the value of this process down by more than an additive constant in expectation. Hence, after a large number $k$ of steps, either we have seen few mismatches or $\log M$ has gone down by a lot.

More precisely, let us fix a constant $\mu>0$ (to be precised later). For $j \geq 1$, we introduce
\begin{align*}
 Z_j := \log M_j - \log M_{j-1} \quad \text{and} \quad \widetilde{Z}_j=  Z_j + \mu\cdot \mathbbm{1}_{\{\text{scale $j-1$ is a mismatch for $\ii_k(X)$}\}}.
\end{align*}
We will prove that if $\mu>0$ is chosen sufficiently small, then for all $j\geq 1$, we have
\begin{align}\label{eq:small exponential moment tilde Zk}
 \Ecsq{\exp\left(\frac12\widetilde{Z}_j\right)}{\mathcal H_{j-1}} \leq 1.
\end{align}
From here, the result follows from using a Chernoff bound. Indeed, using the last display in cascade, we obtain $\Ec{\exp \left(\frac{1}{2} \sum_{j=1}^{k}\widetilde{Z}_j \right)}\leq 1$ so that we have
\begin{align*}
	 \Pp{\sum_{j=1}^{k}\widetilde{Z}_j \geq 2 \eta k}\leq \exp(-\eta k).
\end{align*}
Writing $\sum_{j=1}^{k}\widetilde{Z}_j= \log M_k + \mu\cdot \sum_{j=1}^{k}\mathbbm{1}_{\{\text{scale $j-1$ is a mismatch for $\ii_k(X)$}\}}$, we obtain 
\begin{align*}
	\Pp{\log M_k \geq (2\eta -\mu \beta/2)k \text{ and } \ii_k(X) \text{ has more than $\frac\beta2 k$ mismatches}} \leq \exp(-\eta k),
\end{align*}
which is what we want to prove if we set $\eta=\frac{\mu\beta}{8}$.

So now, we only have to prove \eqref{eq:small exponential moment tilde Zk} for some value $\mu>0$. 
First, on the event that scale $j-1$ is not a mismatch, we have
\begin{align*}
	\Ecsq{e^{\widetilde Z_j}}{\mathcal H_{j-1}}= \Ecsq{e^{Z_j}}{\mathcal H_{j-1}}= \Ecsq{\frac{M_j}{M_{j-1}}}{\mathcal H_{j-1}}=1
\end{align*}  
by the martingale property. Therefore, by concavity of $x\mapsto x^\frac{1}{2}$ and Jensen's inequality, we have $\Ecsq{e^{\frac12 \widetilde Z_j}}{\mathcal H_{j-1}}\leq 1^\frac{1}{2} =1$. 
Hence we only have to focus on the event where scale $j-1$ is a mismatch. 
For this, we use the following lemma.
\begin{lemma}\label{lem:small exponential moment for Z+c}
There exists $\mu=\mu(\alpha,\delta)$ such that the following holds. 
Let $(p_1,p_2,p_3)$ and $(q_1,q_2,q_3)$ be two elements of the simplex $\enstq{(x_1,x_2,x_3)}{x_1+x_2+x_3=1}$, and let $Z$ be a random variable given by
\begin{align*}
 Z= \log q_I - \log p_I,\qquad \text{where for all } i\in \{1,2,3\}, \quad \Pp{I=i}=p_i.
\end{align*}
If $\max_{1 \leq i \leq 3}|p_i-q_i| \geq \delta$ and $p_1,p_2,p_3\geq \alpha$,
then we have
\begin{align}
 \Ec{\exp\left(\frac{1}{2} (Z + \mu )\right)}\leq 1.
\end{align}
\end{lemma}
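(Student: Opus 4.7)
\medskip

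\textbf{Proof proposal.} The plan is to compute $\Ec{\exp(Z/2)}$ in closed form, recognize it as (one minus) the squared Hellinger distance between $p$ and $q$, and then show that the assumptions of the lemma force this Hellinger distance to be bounded below by a positive constant depending only on $\alpha$ and $\delta$. Concretely,
\[
\Ec{\exp(Z/2)} \;=\; \sum_{i=1}^3 p_i \cdot \sqrt{q_i/p_i} \;=\; \sum_{i=1}^3 \sqrt{p_i q_i} \;=\; 1 \;-\; \frac{1}{2}\sum_{i=1}^3 (\sqrt{p_i}-\sqrt{q_i})^2,
\]
so it suffices to exhibit $c=c(\alpha,\delta)>0$ such that $\sum_i(\sqrt{p_i}-\sqrt{q_i})^2 \geq 2c$ under the hypotheses: we can then take $\mu := -2\log(1-c)$ and conclude.

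To get such a lower bound, pick an index $i^\star$ realizing $|p_{i^\star} - q_{i^\star}| \geq \delta$ and note that, by assumption, $p_{i^\star} \geq \alpha$. I would then split into cases according to the ratio $q_{i^\star}/p_{i^\star}$. If $q_{i^\star} \leq p_{i^\star}/2$ or $q_{i^\star} \geq 2p_{i^\star}$, then $|\sqrt{p_{i^\star}}-\sqrt{q_{i^\star}}|$ is at least a fixed positive multiple of $\sqrt{p_{i^\star}} \geq \sqrt{\alpha}$. Otherwise $\sqrt{p_{i^\star}}+\sqrt{q_{i^\star}} \leq 2\sqrt{2p_{i^\star}} \leq 2\sqrt{2}$, and using the identity $(\sqrt{p_{i^\star}}-\sqrt{q_{i^\star}})^2 = (p_{i^\star}-q_{i^\star})^2/(\sqrt{p_{i^\star}}+\sqrt{q_{i^\star}})^2$ one obtains a lower bound of order $\delta^2$. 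In all three cases one therefore has a positive lower bound depending only on $\alpha$ and $\delta$, which is what we need.

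Once $c(\alpha,\delta)$ is fixed, the lemma follows immediately by multiplying the identity displayed above by $e^{\mu/2}$ and choosing $\mu$ small enough that $e^{\mu/2}(1-c) \leq 1$, for example $\mu = -2\log(1-c)$. I expect no real obstacle here: the whole statement is an elementary inequality for probability vectors on three atoms, and the Hellinger-distance reformulation makes the case analysis completely explicit. The only mild care needed is to make sure that the case split handles the situation where $q_{i^\star}$ is very small (which is allowed since we have no lower bound on $q$), and that is exactly what the ratio-based splitting above is designed for.
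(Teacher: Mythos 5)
Your proposal is correct and follows essentially the same route as the paper: both reduce to $\Ec{e^{Z/2}}=\sum_i\sqrt{p_iq_i}=1-\frac12\sum_i(\sqrt{p_i}-\sqrt{q_i})^2$ and bound the Hellinger term below by a constant before absorbing it with $e^{\mu/2}$. The only difference is that your three-way case split on $q_{i^\star}/p_{i^\star}$ (and the use of $\alpha$) is unnecessary, since $(\sqrt{p_{i^\star}}-\sqrt{q_{i^\star}})^2=(p_{i^\star}-q_{i^\star})^2/(\sqrt{p_{i^\star}}+\sqrt{q_{i^\star}})^2\geq\delta^2/4$ holds uniformly because $\sqrt{p_{i^\star}}+\sqrt{q_{i^\star}}\leq 2$, which is exactly the one-line bound the paper uses.
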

We then apply the lemma to the random variable $\widetilde{Z}_j$ conditionally on $\mathcal{H}_{j-1}$, on the event that $j-1$ is a mismatch, by taking $p_i= \frac{\left| \Rr[\ii_{j-1}(X)i] \right|}{\left| \Rr[\ii_{j-1}(X)] \right|}$ and $q_i= \frac{\left| \Psi \left( \Rr[\ii_{j-1}(X)i] \right) \right|}{\left| \Psi \left( \Rr[\ii_{j-1}(X)] \right) \right|}$.
This ensures that \eqref{eq:small exponential moment tilde Zk} is satisfied for the appropriate choice of $\mu$ given in the lemma.
\end{proof}

\begin{proof}[Proof of Lemma~\ref{lem:small exponential moment for Z+c}]
For $\mu>0$, we have
		\[ \Ec{\exp \left( \frac12(Z+\mu)\right)} = e^{\frac12 \mu} \sum_{i=1}^3 p_i^{\frac12}\cdot q_i^{\frac12}. \]
		For each term, we have $p_i^{\frac12} \cdot q_i^{\frac12} \leq \frac{p_i+q_i}{2}$. Moreover, let $a$ be such that $|p_a-q_a| \geq \delta$. We have
		\[ p_a^{\frac12} \cdot q_a^{\frac12} = \frac{p_a+q_a}{2} - \frac{1}{2} \left( p_a^{\frac12}-q_a^{\frac12} \right)^2 \leq \frac{p_a+q_a}{2} - \frac{1}{2} \left( \frac{\delta}{2} \right)^2,\]
	since $\sqrt{y}-\sqrt{x} \geq \frac{1}{2}(y-x)$ for $0 \leq x \leq y \leq 1$. Hence, we have
	\[ \Ec{\exp \left( \frac12(Z+\mu)\right)} = e^{\frac12 \mu} \sum_{i=1}^3 p_i^{\frac12} \cdot q_i^{\frac12} \leq e^{\frac12 \mu} \left( -\frac{\delta^2}{8} + \sum_{i=1}^3  \frac{p_i+q_i}{2}  \right) = e^{\frac12 \mu} \left( 1-\frac{\delta^2}{8} \right). \]
	This proves our claim, by taking $\mu>0$ small enough.
\end{proof}

Given Propositions~\ref{prop_existence_mismatches} and~\ref{prop:mass of bad regions is small}, it is now easy to conclude the proof of Proposition~\ref{prop_homeo_mass}.
\begin{proof}[Proof of Proposition~\ref{prop_homeo_mass}]
We know that almost surely, for $k$ large enough, the conclusions of Lemma~\ref{lem_many_good_scales} and Proposition~\ref{prop_existence_mismatches} hold. We fix $k$ for which it is the case.
Let $\Psi: \Tt \to \Tt'$ and consider some $\ii \in U^{k,\Psi}$, meaning that $\left| \Psi \left( \Rr[\ii] \right) \right| > e^{-\eta k} \left| \Rr[\ii] \right|$. 
Then
\begin{itemize}
	\item either $\ii \in U^k$ as defined by Lemma~\ref{lem_many_good_scales}, meaning that $\ii$ has less than $\beta k$ good scales,
	\item or Item 2 of Proposition~\ref{prop_existence_mismatches} holds, as Item 1 is prohibited since $\left| \Psi \left( \Rr[\ii] \right) \right| > e^{-\eta k} \left| \Rr[\ii] \right|$, so the region $\ii$ must have at least $\lfloor \frac{\beta}{2}k \rfloor$ scales that are $\delta$-mismatches, hence also weak $\delta$-mismatches. This entails, using our initial assumption on $\ii$, that $\ii \in V^{k,\Psi}$. 
\end{itemize}
Therefore, on the event that we considered, we have $U^{k,\Psi} \subset U^k \cup V^{k,\Psi}$, so
\begin{align*}
	\left|\bigcup_{\ii \in U^{k,\Psi}} \Rr[\ii]\right| < e^{-\eta k} + e^{-\gamma k} \leq e^{-\xi k},
\end{align*}
where $\gamma$ and $\eta$ are given respectively by Lemma~\ref{lem_many_good_scales} and Proposition~\ref{prop:mass of bad regions is small}. This concludes the proof by taking $0<\xi<\min(\gamma,\eta)$.
\end{proof}

\section{Proofs of the main results}\label{sec:proof of main results}

\subsection{H\"older homeomorphisms}
We start with the proof of Theorem~\ref{thm_holder_homeo}, which follows quite straightforwardly from Proposition~\ref{prop_homeo_mass}.
\begin{proof}[Proof of Theorem~\ref{thm_holder_homeo}] 
	Let $\left( \Tt, \left( \Rr[\ii] \right)_{\ii \in \mathbb{T}_3} \right)$ be a decomposed Brownian tree and let $\Tt'$ be an independent Brownian tree. Let also $\Psi:\Tt\rightarrow\Tt'$ be a homeomorphism. We will find a constant $\zeta>0$ such that almost surely, for $k$ large enough, we can find $\ii \in \mathbb{T}_3^k$ such that
	\begin{align}
		\diam(\Psi(\Rr[\ii ]))^{1-\zeta}\leq  \diam(\Rr[\ii ]).
	\end{align}
	Since the maximal diameter over all the regions of level $k$ tends to $0$ as $k \to +\infty$, this will entail that $\Psi^{-1}$ cannot be $(1-\frac{\zeta}{2})$-Hölder. 
	Since the problem is symmetric in $\Tt$ and $\Tt'$, this also shows that $\Psi$ cannot be $(1-\frac{\zeta}{2})$-Hölder either.
	 
	Let $k \geq 0$. On the one hand, we know that almost surely, for $k$ large enough, the conclusions of Proposition~\ref{prop_homeo_mass} and Lemma~\ref{lem:rough_control_size_regions} hold. On the other hand, we recall that by Proposition~\ref{prop_independence_decomposition}, conditionally on their masses, the regions $(\Rr[\ii ])_{\ii \in 
		\mathbb{T}_3^k}$ are independent Brownian trees with those respective masses. 
	Moreover, there exists a constant $u>0$ such that the Brownian tree $\Tt$ of mass $1$ satisfies $\Pp{\diam(\Tt)\leq x} \leq \exp(-ux^{-2})$ for all $x>0$ (this can e.g. be deduced from the explicit distribution of the maximum~\cite{kennedy_distribution_1976}). By union bound and Borel--Cantelli, there is a constant $c>0$ such that almost surely, we have for $k$ large enough
	\begin{align*}
		\min_{\ii \in \mathbb{T}_3^k} \left\lbrace\frac{\diam (\Rr[\ii ])}{\sqrt{|\Rr[\ii ]|}} \right\rbrace \geq \frac{c}{\sqrt{k}}.
	\end{align*}
	Combining this with the upper bound of Lemma~\ref{lem:rough_control_size_regions} (which ensures that $|\Rr[\ii]|$ behaves roughly as a decreasing exponential in $k$), 
	this implies that for any $\eps>0$, almost surely for $k$ large enough and $\ii \in \mathbb{T}_3^k$, we have
	\begin{equation}\label{eq:diam is at least mass to the half minus epsilon}
		\diam (\Rr[\ii ]) \geq c \sqrt{\frac{|\Rr[\ii ]|}{k}} \geq |\Rr[\ii ]|^{\frac{1}{2-\eps}}.
	\end{equation}
	
	Controlling the diameter of the regions $\Psi \left( \Rr[\ii]\right)$ of $\Tt'$ cannot be done in the same way, as we do not have a priori estimates on the shape of $\Psi \left( \Rr[\ii]\right)$. Therefore, we will use the definition of $\Tt'$ via the Brownian excursion, and the fact the excursion is H\"older.
	More precisely, we recall from Section~\ref{subsec:BCRT} that $\Tt'$ is built as a quotient of $[0,1]$ using a Brownian excursion $\mathbf{e}'=\left( e_t' \right)_{0 \leq t \leq 1}$. We denote by $p_{\mathbf{e}'}$ the canonical projection from $[0,1]$ to $\Tt'$.
	Now, for any $\ii \in \mathbb{T}_3$, the region $\Rr[\ii]$ is delimited by at most $2$ points. 
	Therefore, the same is true for the region $\Psi \left( \Rr[\ii] \right)$ of $\Tt'$. 
	This implies that $\Psi \left( \Rr[\ii] \right)$ is of the form $p_{\mathbf{e}'}(I_1 \cup I_2 \cup I_3)$, where $I_1 \cup I_2 \cup I_3$ is the union of three sub-intervals of $[0,1]$. 
	By connectedness of $\Psi \left( \Rr[\ii] \right)$, we have
	\[ \diam \left( \Psi \left( \Rr[\ii] \right) \right) \leq \sum_{j=1}^3 \diam \left( p_{\mathbf{e'}}(I_j) \right). \]
	Now let $\eps>0$. We know that if $|s-t|$ is small enough, then $|e'_s-e'_t| \leq \frac13 |s-t|^{\frac12-\eps}$. On the other hand, we know that $\left| \Psi \left( \Rr[\ii] \right)\right| $ goes a.s. to $0$ as the depth of $\ii$ goes to $+\infty$, so almost surely, for $k$ large enough and $\ii$ of depth $k$, we can write
	\begin{equation}
		\label{eq:diam of region is smaller than mass to some power}
		\diam \left( \Psi \left( \Rr[\ii] \right) \right) \leq \sum_{j=1}^3 \diam \left( p_{\mathbf{e'}}(I_j) \right) \leq \sum_{j=1}^3 \frac13 |I_j|^{\frac12-\eps} \leq \left| \Psi \left( \Rr[\ii] \right) \right|^{\frac12-\eps},
	\end{equation}
	since $p_{\mathbf{e}'}$ is measure-preserving.
	
	We can finally put things together. 
	Using the conclusion of Proposition~\ref{prop_homeo_mass}, almost surely for $k$ large enough, there exists $\ii \in \mathbb{T}_3^k$ such that
	\begin{equation}
		\label{eq:mass of Psi of a region is smaller than mass to some power}
		\left| \Psi \left( \Rr[\ii] \right) \right| \leq e^{-\eta k} \left| \Rr[\ii] \right| \leq \left| \Rr[\ii] \right|^{1+\eta/C},
	\end{equation}
	where the second inequality comes from Lemma~\ref{lem:rough_control_size_regions}.
	For this $\ii$, we have
	\begin{align*}
		\diam(\Psi(\Rr[\ii ])) \underset{\eqref{eq:diam of region is smaller than mass to some power}}{\leq} |\Psi(\Rr[\ii ])|^{\frac{1}{2}-\eps} \underset{\eqref{eq:mass of Psi of a region is smaller than mass to some power}}{\leq} |\Rr[\ii ]|^{(\frac{1}{2}-\eps)(1+\frac{\eta}{C})} \underset{\eqref{eq:diam is at least mass to the half minus epsilon}}{\leq} \diam(\Rr[\ii ])^{(\frac{1}{2}-\eps)(2-\eps)(1+\frac{\eta}{C})}.
	\end{align*}
	From there, we just need to take $\eps>0$ small enough to conclude. This proves the theorem and we can take $\eps_{\ref{thm_holder_homeo}}<\frac{\eta}{2C}$.
\end{proof}
\subsection{Maximum agreement subtree bound}
In this section, we prove Theorem~\ref{thm_MAST_upper} from two intermediate results. 
The first one, Corollary~\ref{prop:sum sqrt prod mass}, is a direct corollary of Proposition~\ref{prop_homeo_mass}. The second one, Lemma~\ref{lem:bound mast of two regions}, roughly says that the square root upper bound holds simultaneously in all regions of $T_n$ and $T'_n$ (its proof relies on the same ideas as the classic square root upper bound).
We will only state this lemma here, and prove it in the next subsection.
\begin{corollary}\label{prop:sum sqrt prod mass}
	Let $\left( \mathcal{T}, \left( \Rr[\ii] \right)_{\ii \in \mathbb{T}_3} \right)$ be a decomposed Brownian tree and let $\mathcal{T}'$ be an independent Brownian tree. There exists a constant $\rho>0$ such that with probability $1-o_k(1)$, for any homeomorphism $\Psi: \mathcal T \rightarrow \mathcal{T}'$, we have
	\begin{align*}
		\sum_{\ii \in \mathbb{T}_3^k} \sqrt{|\Rr[\ii] | \cdot |\Psi(\Rr[\ii] )|} \leq \exp(- \rho \cdot k).
	\end{align*}
\end{corollary}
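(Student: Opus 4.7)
The plan is to prove Corollary~\ref{prop:sum sqrt prod mass} by splitting the sum according to the dichotomy provided by Proposition~\ref{prop_homeo_mass}: indices $\ii$ where $\Psi$ "preserves mass up to a factor $e^{-\eta k}$" (the set $U^{k,\Psi}$), and indices where the image region is exponentially smaller than the original. We work on the full-probability event (for $k$ large enough) where the conclusion of Proposition~\ref{prop_homeo_mass} holds, with constants $\xi,\eta>0$.

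For $\ii \notin U^{k,\Psi}$, by definition $|\Psi(\Rr[\ii])| \leq e^{-\eta k} |\Rr[\ii]|$, so $\sqrt{|\Rr[\ii]|\cdot|\Psi(\Rr[\ii])|} \leq e^{-\eta k/2} |\Rr[\ii]|$. Summing over these indices and using that the $\Rr[\ii]$ for $\ii \in \mathbb{T}_3^k$ form a partition of $\Tt$, this contribution is bounded by $e^{-\eta k/2}$. For $\ii \in U^{k,\Psi}$, I would apply Cauchy--Schwarz:
\begin{align*}
\sum_{\ii \in U^{k,\Psi}} \sqrt{|\Rr[\ii]|\cdot|\Psi(\Rr[\ii])|} \leq \sqrt{\sum_{\ii \in U^{k,\Psi}} |\Rr[\ii]|} \cdot \sqrt{\sum_{\ii \in U^{k,\Psi}} |\Psi(\Rr[\ii])|}.
\end{align*}
The first factor equals $\sqrt{|E^{k,\Psi}|} \leq e^{-\xi k/2}$ by Proposition~\ref{prop_homeo_mass}. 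The second factor is at most $1$: indeed, $\Psi$ is a homeomorphism so the images $\Psi(\Rr[\ii])$ have pairwise disjoint interiors in $\Tt'$ (the regions $\Rr[\ii]$ have this property in $\Tt$, and homeomorphisms preserve interiors up to measure-zero boundary), hence $\sum_{\ii \in \mathbb{T}_3^k} |\Psi(\Rr[\ii])| = |\Tt'| = 1$. Adding the two contributions yields the bound $e^{-\xi k/2} + e^{-\eta k/2} \leq \exp(-\rho k)$ for any $\rho < \tfrac{1}{2}\min(\xi,\eta)$ and $k$ large enough.

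There is essentially no genuine obstacle here, since all the hard work is done in Proposition~\ref{prop_homeo_mass}; the only point to verify carefully is that the images $\Psi(\Rr[\ii])$ cover $\Tt'$ with total mass $1$, which relies on $\Psi$ being a bijection together with the fact that boundaries of regions have measure zero (because the mass measure on $\Tt'$ is nonatomic and the boundaries are finite). Note also that the statement holds uniformly over all homeomorphisms $\Psi$ precisely because Proposition~\ref{prop_homeo_mass} is itself uniform in $\Psi$ on the relevant almost sure event.
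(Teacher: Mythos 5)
Your proof is correct and follows essentially the same route as the paper: split the sum according to membership in $U^{k,\Psi}$, bound the complement by pulling out the factor $e^{-\eta k/2}$, and apply Cauchy--Schwarz together with $|E^{k,\Psi}|\leq e^{-\xi k}$ on $U^{k,\Psi}$. Your extra remark justifying $\sum_{\ii}|\Psi(\Rr[\ii])|\leq 1$ is a detail the paper leaves implicit, but the argument is the same.
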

\begin{proof}
	Let $\xi, \eta>0$ be given by Proposition~\ref{prop_homeo_mass}, and let $\Psi : \Tt \to \Tt'$ be a homeomorphism. 
	For $U^{k,\Psi}$ defined as in Proposition~\ref{prop_homeo_mass} and on the event of probability $1-o_k(1)$ on which its conclusion holds, we can write for $k$ large enough
	\begin{align*}
		\sum_{\ii \in \mathbb{T}_3^k} \sqrt{|\Rr[\ii] | \cdot |\Psi(\Rr[\ii] )|}
		&= \sum_{\ii \in \mathbb{T}_3^k\setminus U^{k,\Psi}} \sqrt{|\Rr[\ii] | \cdot |\Psi(\Rr[\ii] )|} + \sum_{\ii \in U^{k,\Psi}} \sqrt{|\Rr[\ii] | \cdot |\Psi(\Rr[\ii] )|}\\
		&\leq \sum_{\ii \in \mathbb{T}_3^k\setminus U^{k,\Psi}} |\Rr[\ii] |  \cdot \sqrt{ \frac{|\Psi(\Rr[\ii] )|}{|\Rr[\ii] |}} + \sqrt{\sum_{\ii \in U^{k,\Psi}} |\Rr[\ii] | }\cdot \sqrt{\sum_{\ii \in U^{k,\Psi}} |\Psi(\Rr[\ii] )|}\\
		&\leq 1 \cdot e^{-\eta k/2} + e^{-\xi k/2} \cdot 1 \\
		&< e^{-\rho k},
	\end{align*}
 where $\rho$ is chosen so that $\rho<\frac{1}{2}\min(\eta, \xi)$, the first inequality follows from the Cauchy--Schwarz inequality, and the second from Proposition~\ref{prop_homeo_mass}.	
\end{proof} 

\begin{lemma}\label{lem:bound mast of two regions}
	Let $\eps>0$. With high probability as $n\rightarrow \infty$, for any two regions $R\subset T_n$ and $R'\subset T_n'$, we have
	\begin{align}\label{eqn_general_sqrt_bound}
	\mathrm{MAST}(R,R') \leq 4e\sqrt{2} \cdot \left( n^\eps \vee \sqrt{\frac{(\#R)\cdot (\#R')}{n}}\right).
	\end{align}
\end{lemma}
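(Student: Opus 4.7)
My plan is to use a first-moment computation combined with a union bound over the sizes $r = \#R$ and $r' = \#R'$. For fixed $r, r' \in \intervalleentier{1}{n}$, I will set $k := \lceil 4e\sqrt{2}(n^\eps \vee \sqrt{rr'/n})\rceil$ and let $X_{r, r'}$ count the witness triples $(R, R', I)$ with $R \subset T_n$, $R' \subset T_n'$ regions of sizes $r, r'$, and $I \subset L_R \cap L_{R'}$ a $k$-element subset satisfying $R|_I = R'|_I$. By Markov, it suffices to show $\Ec{X_{r, r'}} = o(n^{-2})$; a union bound over the $O(n^2)$ possible $(r, r')$ then concludes.

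For the computation of $\Ec{X_{r, r'}}$, I will rely on three ingredients. First, the identity $R|_I = T_n|_I$ for $I \subset L_R$ reduces $R|_I = R'|_I$ to $T_n|_I = T_n'|_I$. Second, for any fixed $I \subset \intervalleentier{1}{n}$, the restriction $T_n|_I$ is uniformly distributed on $\Bb_I$ (a standard property of uniform cladograms), so by independence of $T_n$ and $T_n'$, $\Pp{T_n|_I = T_n'|_I} = 1/(2k-5)!!$. Third, for any fixed $I_0$ of size $k$, a symmetry argument over label permutations gives
\[
\Ec{N_r(I_0, T_n)} = \frac{\binom{r}{k}}{\binom{n}{k}} N(r), \qquad N(r) := \Ec{\#\{R \subset T_n : \#R = r\}} \leq C n^2,
\]
where $N_r(I_0, T_n) := \#\{R \subset T_n : \#R = r, I_0 \subset L_R\}$ and the bound on $N(r)$ comes from the deterministic fact that any region of $T_n$ is delimited by at most two of the $n-2$ internal nodes of $T_n$. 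Combining these, conditioning on $T_n|_{I_0}$, and using independence, I obtain
\[
\Ec{X_{r, r'}} = \frac{\binom{n}{k}}{(2k-5)!!^{2}} \sum_{\tau \in \Bb_{I_0}} \Ecsq{N_r(I_0, T_n)}{T_n|_{I_0} = \tau}\,\Ecsq{N_{r'}(I_0, T_n)}{T_n|_{I_0} = \tau}.
\]

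The hard part will be the decorrelation estimate
\[
\sum_{\tau \in \Bb_{I_0}} \Ecsq{N_r(I_0, T_n)}{T_n|_{I_0} = \tau}^{2} \leq C\,(2k-5)!!\,\Ec{N_r(I_0, T_n)}^{2},
\]
which I plan to prove via a careful exchangeability argument: conditionally on $T_n|_{I_0} = \tau$, the tree $T_n$ is obtained by inserting the $n-k$ labels of $\intervalleentier{1}{n} \setminus I_0$ uniformly onto the edges of $\tau$, and the resulting count of regions of prescribed size containing $I_0$ should be only mildly sensitive to $\tau$. Granted this, a Cauchy-Schwarz application to the preceding display yields $\Ec{X_{r, r'}} \leq C \binom{r}{k}\binom{r'}{k} N(r) N(r') / (\binom{n}{k} (2k-5)!!)$. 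Stirling bounds $\binom{r}{k} \leq (er/k)^k$, $\binom{n}{k} \geq (n/k)^k$, and $(2k-5)!! \geq (2k/e)^{k-2}$ bound the main ratio by $(e^3 rr'/(2nk^2))^k$ times a polynomial in $k$; the choice $k \geq 4e\sqrt{2}\sqrt{rr'/n}$ makes the base at most $e/64 < 1/23$, while $k \geq 4e\sqrt{2} n^\eps$ ensures that the resulting exponential decay $23^{-k}$ dominates the polynomial factors (including $N(r) N(r') \leq C^2 n^4$), giving $\Ec{X_{r, r'}} = o(n^{-2})$ as required.
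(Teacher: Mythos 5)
Your overall route --- a direct first-moment count of witness triples $(R,R',I)$ with a union bound over the sizes $(r,r')$ --- is genuinely different from the paper's. The paper conditions on the \emph{shapes} of $T_n$ and $T'_n$ and randomizes only the labels via two independent uniform permutations: the regions then become deterministic, the common label set of a pair of regions is the intersection of two independent uniform subsets (controlled by Lemma~\ref{lem:size of the intersection of two indep subsets}), and the MAST of the two induced subtrees on that common label set is bounded by the exchangeable first-moment estimate of Lemma~\ref{lem:square root bounds on size of mast}. That conditioning removes at the outset precisely the correlation your computation must confront head-on, namely the correlation between ``$I$ sits inside a region of prescribed size'' and ``$T_n|_I=\tau$''.

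This is where your proposal has a genuine gap: the ``decorrelation estimate'' $\sum_{\tau}\Ecsq{N_r(I_0,T_n)}{T_n|_{I_0}=\tau}^2\leq C\,(2k-5)!!\,\Ec{N_r(I_0,T_n)}^2$ is the crux of your argument, it must hold uniformly in $r$ and in $k$, and ``should be only mildly sensitive to $\tau$'' is a hope rather than a proof --- it is a bounded-relative-variance claim for $\tau\mapsto\Ecsq{N_r(I_0,T_n)}{T_n|_{I_0}=\tau}$, exactly the kind of second-moment control that can fail. The estimate is in fact true, and even with $C=1$, but for a reason your sketch does not contain. Given $T_n|_{I_0}=\tau$, the tree $T_n$ is obtained by sequential uniform edge-insertions started from $\tau$, and (i) the law of the resulting collection of decorations hanging off the $2k-3$ edges of $\tau$ does not depend on the shape of $\tau$, while (ii) a region of $T_n$ containing all of $I_0$ can only be delimited by nodes lying \emph{strictly inside} those decorations, since every node of $T_n$ lying on the spanning subtree of $I_0$ has at least two of its three incident edges in that subtree and hence disconnects $I_0$; consequently $N_r(I_0,T_n)$ is a function of the decoration forest alone and is \emph{independent} of $T_n|_{I_0}$. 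Observation (ii) is the missing structural ingredient; without it the key step is unproven and the proof is incomplete. The surrounding bookkeeping (the reduction $R|_I=T_n|_I$, the uniformity of $T_n|_I$, the $O(n^2)$ bound on the number of regions, the Stirling estimates and the choice of $k$ making the base $e/64<1$) is correct.
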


We can now prove Theorem~\ref{thm_MAST_upper}. 
\begin{proof}[Proof of Theorem~\ref{thm_MAST_upper}]
	We let $\theta= \min \left( \frac{1}{2C}, \frac{1}{4 \log 3} \right)$, where $C>0$ is given by Lemma~\ref{lem:rough_control_size_regions} and we take $k=\lfloor \theta \log n \rfloor$. Note that in particular, the number of regions of the recursive decomposition of $\Tt$ at scale $k$ is $3^k\leq n^{\theta \log 3} \leq n^{\frac{1}{4}}$.
	We now let $\eps =\frac{\theta \rho}{4}$, where $\rho$ is given by Corollary~\ref{prop:sum sqrt prod mass}. We assume that $T'_n$ is coupled with an $n$-pointed Brownian tree $\left( \Tt', (X'_j)_{1 \leq j \leq n} \right)$ in the way described in Section~\ref{subsec:discrete_continuous_coupling}. 
	We also assume that $\left(\Tt', (X'_j) \right)$ is independent from $\Tt$, $\left( \Rr[\ii] \right)_{\ii \in \mathbb{T}_3^k}$ and $T_n$.
		
	Suppose that for some $S\subset \{1,\dots, n\}$, we have $T_n\vert_S=T_n'\vert_S=t$. Then we claim that there exists a homeomorphism $\Psi$ from $\Tt$ to $\Tt'$ such that $\Psi(X_j)=X'_j$ for all $i \in S$. 
	Indeed, up to reparametrization of the edges, there exists a unique embedding $\varphi$ of $t$ into $\Tt$ that sends the leaf labelled $j$ to $X_j$ for any label $j$ appearing in $t$, and a unique embedding $\varphi'$ of $t$ into $\Tt'$ that sends similarly $j$ to $X'_j$. For every edge $e=(x_e,y_e)$ of $t$, let $\mathcal{T}_e$ be the set of points $x \in \mathcal{T}$ such that the closest point of $\varphi(t)$ to $x$ belongs to $\varphi(e)$. 
	We define similarly the region $\mathcal{T}'_e \subset \Tt'$. 
	For every $e$, the regions $\mathcal{T}_e$ and $\mathcal{T}'_e$ are compact real trees where branching points are dense and all have degree $3$, so they have the same topology by~\cite[Theorem 1]{BT21} (see also~\cite{CH08})\footnote{The results in~\cite{BT21} are only stated for unpointed Brownian trees, but the proofs extend straightforwardly to bipointed trees.}. 
	Therefore, there exists a homeomorphism $\Psi_e : \mathcal{T}_e \to \mathcal{T}'_e$ such that $\Psi_e \left( \varphi(x_e) \right)=\varphi'(x_e)$ and $\Psi_e \left( \varphi(y_e) \right)=\varphi'(y_e)$. 
	The homeomorphism $\Psi$ is obtained by patching the $\Psi_e$ together. 
	Finally, for any $\ii \in \mathbb{T}_3^k$, we denote by $R[\ii]$ the smallest region of $T_n$ that contains all the labels $j \in \{1,\dots,n\}$ such that $X_j \in \Rr[\ii]$. 
	We define similarly the regions $R'[\ii]$ of $T'_n$ using $\Tt'$ and $(X'_j)$.
	
	For every $\ii \in \mathbb{T}_3^k$, note that by definition of $R[\ii], R'[\ii]$ and by the fact that $\Psi(X_j)=X'_j$ for $j \in S$, we have $S \cap R[\ii]=S \cap R'[\ii]$. In particular, this subset induces the same subtree in $R[\ii]$ and in $R'[\ii]$. Therefore, we can write
	\[ \# S = \sum_{\ii \in \mathbb{T}_3^k} \#(S \cap R[\ii]) \leq \sum_{\ii \in \mathbb{T}_3^k} \MAST \left( R[\ii], R'[\ii] \right). \]
	On the other hand, Lemma~\ref{lem:bound mast of two regions} ensures that with probability $1-o_n(1)$, for all $\ii \in \mathbb{T}_3^k$ we have
	\begin{align*}
		\mathrm{MAST}(R[\ii] ,R'[\ii]) 
		&\leq 24e \cdot \left(n^\eps \vee \sqrt{\frac{(\#R[\ii] )(\#R'[\ii] )}{n}}\right)\\ 
		&\leq 24e \left(n^\eps + \sqrt{\frac{(\#R[\ii] )(\#R'[\ii] )}{n}}\indicator{\{\#R[\ii]  \geq  n^\eps \text{ and } \#R'[\ii] \geq n^\eps\}} \right).
	\end{align*}
	We now use Lemma~\ref{lem:comparison mass vs leaf-count}: with probability $1-o_n(1)$, for any $\ii \in \mathbb{T}_3^k$ such that $\#R[\ii], \#R'[\ii] \geq n^{\eps}$, we have $\#R[\ii]\leq n^{1+\eps}\cdot|\Rr[\ii]|$ and $\#R'[\ii] \leq  n^{1+\eps}\cdot|\Psi(\Rr[\ii])|$, so 
\begin{align*}
	\sqrt{\frac{(\#R[\ii] )(\#R'[\ii])}{n}} \leq n^{\frac{1}{2}+\eps} \cdot \sqrt{|\Rr[\ii] | \cdot |\Psi(\Rr[\ii] )|}.
\end{align*}
Finally putting everything together, we get 
	\begin{align*}
		\#S & \leq   24 e \cdot \left( 3^k\cdot n^\eps + n^{\frac{1}{2}+\eps}\cdot \sum_{\ii \in \mathbb{T}_3^k} \sqrt{|\Rr[\ii] | \cdot |\Psi(\Rr[\ii])|}\right)\\
		&\underset{\text{Cor.~\ref{prop:sum sqrt prod mass}}}{=} O(n^{\frac{1}{4}+\eps}) +  O(n^{\frac{1}{2}+\eps -\rho \theta})\\
		&\underset{\text{choice of $\eps$}}{<} n^{\frac{1}{2}-\frac{\rho \theta}{2}},
	\end{align*}
with probability $1-o_n(1)$.
\end{proof}
\subsection{The refined square root bound}
This section is devoted to proving Lemma~\ref{lem:bound mast of two regions}. 
Before getting to the proof, we will first state and prove two other intermediate results, Lemma~\ref{lem:square root bounds on size of mast} and Lemma~\ref{lem:size of the intersection of two indep subsets}.

First, Lemma~\ref{lem:square root bounds on size of mast}, stated below, is a consequence of \cite[Lemma~4.1]{bryant_size_2003}, see also \cite[Lemma~4.1, Proposition~4.2]{bernstein_bounds_2015}, but expressed in a slightly different context.
We provide a quick proof for completeness, adapted from the same references. We say that a random labelled tree is \emph{exchangeable} if its distribution is invariant under uniform random permutation of the labels on the leaves.
\begin{lemma}\label{lem:square root bounds on size of mast}
	Suppose that $S_m$ and $S_m'$ are independent exchangeable random variables on $\mathcal{B}_m$ with $m$ leaves (with possibly different distribution). Then for any $s\geq 1$,
	\begin{align*}
	\Pp{\mathrm{MAST}(S_m,S_m') \geq s} \leq \binom{m}{s} \frac{2^{s-2}s}{s!}.
	\end{align*}
	In particular
	\begin{align*}
	\Pp{\mathrm{MAST}(S_m,S_m') \geq 2 e \sqrt{m}} \leq \exp \left(-2e (\log 2) \sqrt{m} + o(\sqrt{m}) \right)
	\end{align*}
as $m \rightarrow \infty$.
\end{lemma}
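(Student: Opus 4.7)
The plan is to carry out the classical first-moment argument, paying careful attention to the combinatorial count. Via the union bound,
\begin{align*}
\mathbb{P}(\mathrm{MAST}(S_m,S_m') \geq s) \leq \sum_{\substack{I\subset\{1,\dots,m\}\\ |I|=s}} \mathbb{P}(S_m|_I = S_m'|_I),
\end{align*}
and the exchangeability of both $S_m$ and $S_m'$ implies that every term in the sum equals the same value, so it suffices to bound $\mathbb{P}(S_m|_I = S_m'|_I)$ for a single fixed $I$ of size $s$ and multiply by $\binom{m}{s}$.

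To bound this single probability, I would condition on the unlabeled shape of $S_m|_I$. By exchangeability of $S_m$, given that the shape of $S_m|_I$ equals some unlabeled binary tree $\tau$, the labeled tree $S_m|_I$ is uniformly distributed among the $s!/|\mathrm{Aut}(\tau)|$ possible leaf-labelings of $\tau$, and similarly for $S_m'|_I$. Hence, conditionally on both shapes being $\tau$, the probability that the two labelings coincide is $|\mathrm{Aut}(\tau)|/s!$. Summing over $\tau$ and using that $\sum_\tau \mathbb{P}(\mathrm{shape}(S_m|_I)=\tau)\,\mathbb{P}(\mathrm{shape}(S_m'|_I)=\tau)\leq 1$, the first inequality of the lemma reduces to the deterministic claim that $|\mathrm{Aut}(\tau)| \leq s \cdot 2^{s-2}$ for any unrooted binary tree $\tau$ with $s$ unlabeled leaves.

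This structural bound is the only mildly nontrivial step. I would fix a leaf $\ell_0$ of $\tau$ and apply the orbit-stabilizer theorem: the orbit of $\ell_0$ has at most $s$ elements, and to control the stabilizer one views $\tau$ as rooted at $\ell_0$, so that each of the $s-2$ internal nodes has two subtrees below it. Any automorphism fixing $\ell_0$ is determined by a binary choice at each such internal node of whether to swap its two subtrees, yielding at most $2^{s-2}$ stabilizing automorphisms. This gives $|\mathrm{Aut}(\tau)| \leq s \cdot 2^{s-2}$ and hence the first display of the lemma.

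For the asymptotic estimate, I would substitute $s = \lceil 2e\sqrt{m}\rceil$ into $\binom{m}{s} \frac{s\cdot 2^{s-2}}{s!}$, use $\binom{m}{s} \leq m^s/s!$ and Stirling in the form $s!\geq (s/e)^s\sqrt{2\pi s}$. Since $s^2 = 4e^2 m$, the factors $e^{2s}$ and $m^s$ cancel against $s^{2s}$, leaving a bound of order $2^{-s}$ up to polynomial corrections, hence $\exp(-2e\log 2 \sqrt{m} + o(\sqrt{m}))$.
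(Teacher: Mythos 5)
Your proof is correct and follows essentially the same route as the paper's: a first-moment/union bound reduced by exchangeability to a single label set, then the observation that an exchangeable law is uniform on each relabeling orbit, so the collision probability is $|\mathrm{Aut}(\tau)|/s!$, and finally the same automorphism bound $|\mathrm{Aut}(\tau)|\leq s\cdot 2^{s-2}$ via orbit--stabilizer (the paper phrases it as "image of one leaf plus a cyclic ordering at each node", which is the same count). The asymptotic step also matches the paper's use of Stirling.
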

\begin{proof}
We use a first moment method to write
\begin{align*}
\Pp{\mathrm{MAST}(S_m,S_m')\geq s} &\leq \Ec{\sum_{\substack{A\subset \{1,\dots,m\}\\ \#A=s}} \mathbbm{1}_{S_m\vert_A=S_m'\vert_A}} \\
&=\binom{m}{s} \Pp{(S_m)\vert_{\{1,\dots,s\}}=(S_m')\vert_{\{1,\dots,s\}}},
\end{align*}
where the last equality follows from the exchangeability of the leaf-labels.
Now, we can bound the probability appearing on the right-hand-side of the last display by
\begin{align*}
\Pp{(S_m)\vert_{\{1,\dots,s\}}=(S_m')\vert_{\{1,\dots,s\}}} 
\leq \sup_{t\in \mathcal{B}_s} \Pp{(S_m)\vert_{\{1,\dots,s\}}=t}. 
\end{align*}
For $t,t'\in \mathcal{B}_s$ we write $t\sim t'$ if $t'$ can be obtained from $t$ by relabeling its leaves.
Note that this defines an equivalence relation on $\mathcal{B}_s$. 
Then for any random variable $T_s$ on $\mathcal{B}_s$ that satisfy the exchangeability property, we have 
\begin{align*}
	\Pp{T_s=t}= \Pp{T_s\sim t} \cdot \Ppsq{T_s=t}{T_s\sim t}
	\leq \Ppsq{T_s=t}{T_s\sim t}
	&=\frac{1}{\#\enstq{t'}{t'\sim t}}.
\end{align*}
Since the distribution of $(S_m)\vert_{\{1,\dots,s\}}$ satisfies the exchangeability condition, we just need to check that the number appearing at the denominator on the right-hand-side of the inequality is bounded from below by $\frac{s!}{2^{s-2}s}$ for any $t\in \mathcal{B}_s$.
For that, it suffices to prove that the number of graph automorphisms of any tree $t\in \mathcal{B}_s$ is bounded below by $2^{s-2}s$. This follows from the fact that an automorphism of a tree is determined by the image of one leaf and a cyclic ordering of the edges around each node. This proves the first claim of the lemma, and the second follows by the Stirling formula.
\end{proof}
In the proof of Lemma~\ref{lem:bound mast of two regions}, the above result will be used to control the size of the MAST of two regions $R\subset T_n$ and $R'\subset T_n$ in terms of their number of common labels.
The goal of the next result is to bound the number of these common labels in terms of $m=\# R$ and $m'=\# R'$.  
\begin{lemma}\label{lem:size of the intersection of two indep subsets}
Let $\eps>0$. 
Let $S$ and $S'$ be two independent uniform random subsets of $\{1,\dots,n\}$ of respective sizes $m,m'$. Then we have the following bound:
\begin{align*}
	\Pp{\# (S\cap S') \geq 8 \cdot \left(n^\eps\vee\frac{m m'}{n}\right) } \leq 2\exp \left(-\frac{2n^\eps}{3}\right).
\end{align*}  
\end{lemma}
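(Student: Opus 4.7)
My approach is a standard Chernoff--Bernstein argument for the hypergeometric distribution. Conditionally on $S'$, write $X := \#(S\cap S') = \sum_{i \in S'} \mathbbm{1}_{i \in S}$. Since $S$ is a uniform subset of $\{1,\dots,n\}$ of fixed size $m$, the random variable $X$ is hypergeometric with parameters $(n,m',m)$ and mean $\mu := mm'/n$. The indicators $(\mathbbm{1}_{i \in S})_i$ are negatively associated, and equivalently (Hoeffding, 1963) the moment generating function of $X$ is dominated by that of a binomial with the same mean: for all $\lambda \geq 0$,
\[
\Ec{e^{\lambda X}} \;\leq\; \Bigl(1 - \tfrac{m}{n} + \tfrac{m}{n} e^{\lambda}\Bigr)^{m'} \;\leq\; \exp\bigl(\mu(e^{\lambda}-1)\bigr).
\]
From this the classical Bernstein-type tail bound
\[
\Pp{X \geq \mu + \tau} \;\leq\; \exp\!\left(-\frac{\tau^{2}/2}{\mu + \tau/3}\right), \qquad \tau \geq 0,
\]
follows by optimizing $\lambda$.

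I would then set $t := 8(n^\eps \vee \mu)$ and apply the inequality above with $\tau := t - \mu \geq 7(n^\eps \vee \mu)$, dealing with two cases. If $\mu \geq n^\eps$, then $\tau = 7\mu$, and the exponent equals
\[
\frac{(49/2)\mu^{2}}{\mu + (7/3)\mu} \;=\; \frac{147}{20}\,\mu \;\geq\; \frac{147}{20}\,n^\eps.
\]
If $\mu < n^\eps$, then $7n^\eps \leq \tau \leq 8n^\eps$, and since $x \mapsto x^2/(a+x/3)$ is increasing in $x$ while the numerator is controlled from below by $(7n^\eps)^2/2$, the exponent is at least
\[
\frac{(7n^\eps)^{2}/2}{n^\eps + (8/3)n^\eps} \;=\; \frac{147}{22}\,n^\eps.
\]
Both lower bounds are comfortably above $\tfrac{2}{3} n^\eps$, so the claimed bound holds with substantial slack; in particular, the prefactor $2$ in front of $\exp(-2n^\eps/3)$ is more than enough to absorb the integer-rounding issue $\Pp{X \geq 8(n^\eps \vee \mu)} = \Pp{X \geq \lceil 8(n^\eps \vee \mu)\rceil}$.

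I do not expect any significant obstacle: the only delicate point is the validity of the Bernstein inequality for the hypergeometric law, which is the classical domination of the hypergeometric MGF by the binomial MGF. Everything else is a direct numerical verification. If one wishes to avoid citing Hoeffding's result, one can instead use the factorial moment bound $\Pp{X \geq t} \leq \Ec{\binom{X}{t}} = \binom{m}{t}\binom{m'}{t}/\binom{n}{t} \leq \mu^{t}/t! \leq (e\mu/t)^{t}$ with $t = \lceil 8(n^\eps \vee \mu)\rceil$, which gives $(e/8)^{t}\leq e^{-t}$ since $\ln(8/e) > 1$, and this is again well within the required bound.
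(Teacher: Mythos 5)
Your proof is correct, and it takes a genuinely different route from the paper's. You condition on $S'$, observe that $\#(S\cap S')$ is hypergeometric with mean $\mu=mm'/n$, invoke Hoeffding's domination of the sampling-without-replacement MGF by the binomial one, and finish with a Bernstein-type bound; your numerics ($147\mu/20$, resp. $147 n^\eps/22$, in the exponent) check out and leave ample slack over the required $2n^\eps/3$. The paper instead writes $\#(S\cap S')=\sum_i \indicator{i\in S\cap S'}$, bounds the sequential conditional probabilities by $\Ppsq{i+1\in S\cap S'}{\mathcal H_i}\leq \frac{mm'}{(n-i)^2}$, and — because this bound degrades as $i\to n$ — splits the sum over the first and last $n/2$ indices to get a stochastic domination of each half by a $\mathrm{Bin}(n/2,\,1\wedge 4(n^{\eps-1}\vee \frac{mm'}{n^2}))$ variable, then applies a multiplicative Chernoff bound to each half; this is where the prefactor $2$ in the statement comes from. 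Your approach is the more standard one for hypergeometric concentration and avoids the two-halves device entirely (the factor $2$ is then pure slack for you). One small remark on your fallback argument: the inequality $\binom{m}{t}\binom{m'}{t}/\binom{n}{t}\leq \mu^t/t!$ is true but not immediate from crude bounds on the individual binomials (naively one gets $(n-t+1)^t$ rather than $n^t$ in the denominator); it follows from the termwise inequality $\frac{(m-i)(m'-i)}{n-i}\leq \frac{mm'}{n}$ for $0\leq i\leq t-1$, which is worth stating if you go that route.
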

\begin{proof}
We write $\# (S\cap S')$ as a sum of indicators $\# (S\cap S')=\sum_{i=1}^{n}\mathbbm{1}_{i\in S\cap S'}$. We also denote by $(\mathcal H_i)_{0\leq i \leq n}$ the filtration generated by the sequence $(\mathbbm{1}_{i\in S},\mathbbm{1}_{i\in S'})_{1\leq i \leq n}$. We can check that
\begin{align*}
	\Ppsq{i+1\in S\cap S'}{\mathcal H_{i}}= \frac{m-\sum_{j=1}^{i}\mathbbm{1}_{i\in S}}{n-i} \cdot \frac{m'-\sum_{j=1}^{i}\mathbbm{1}_{i\in S'}}{n-i} \leq \frac{mm'}{(n-i)^2}.
\end{align*}
Therefore, we have the following stochastic domination 
\begin{align*}
	\sum_{i=1}^{\frac{n}{2}} \mathbbm{1}_{i\in S\cap S'} \preceq_{\mathrm{st}} \mathrm{Bin}\left(\frac{n}{2},1 \wedge \frac{4mm'}{n^2}\right)\preceq_{\mathrm{st}}  \mathrm{Bin}\left(\frac{n}{2},1\wedge 4\cdot \left(n^{\eps -1} \vee \frac{mm'}{n^2}\right) \right).
\end{align*}
By shuffling the indices, the same domination holds for $\sum_{i=1}^{\frac{n}{2}} \mathbbm{1}_{i\in S\cap S'}$, so we obtain
\begin{align*}
	&\Pp{\# (S\cap S') \geq  8 \cdot \left(n^\eps\vee\frac{m m'}{n}\right)}\\
	 &\leq \Pp{\sum_{i=1}^{\frac{n}{2}} \mathbbm{1}_{i\in S\cap S'}\geq  4 \cdot \left(n^\eps\vee\frac{m m'}{n}\right)} 
	 +\Pp{\sum_{i=\frac{n}{2}+1}^n \mathbbm{1}_{i\in S\cap S'}\geq  4 \cdot \left(n^\eps\vee\frac{m m'}{n}\right)} \\
	&\leq 2 \exp\left(-\frac{2n^\eps}{3}\right),
\end{align*}
where for the last inequality we use that for a binomial random variable $X$ with expectation $\mu$, we have $\Pp{X\geq (1+\eps)\mu}\leq \exp(\frac{\eps \mu }{3})$.
\end{proof}
We can finally prove Lemma~\ref{lem:bound mast of two regions}.
\begin{proof}[Proof of Lemma~\ref{lem:bound mast of two regions}]
We will actually prove a much stronger statement, which is that the estimate of Lemma~\ref{lem:bound mast of two regions} holds even if we condition on the shape of $T_n$, $T'_n$.

More precisely, let $\sigma,\sigma'$ be two independent uniform random permutations of $\{1,\dots,n\}$, defined on the same probability space and independent of $T_n, T_n'$. We denote by $T_n^\sigma$ (resp. $(T_n')^{\sigma'}$) the tree obtained from $T_n$ (resp. $T'_n$) by replacing each label $j$ be the label $\sigma(j)$ (resp. $\sigma'(j)$). By exchangeability of the model, the couple $(T_n^\sigma,(T_n')^{\sigma'})$ has the same distribution as $(T_n,T_n')$ so we can prove the lemma for the former, and it will hold for the latter as well. For any region $R$ (resp. $R'$) of $T_n$ (resp. $T'_n$), we denote by $R^{\sigma}$ (resp. $(R')^{\sigma'}$) the corresponding region in $T_n^{\sigma}$ (resp. $(T'_n)^{\sigma'}$). That is, the label $j$ is in $R^{\sigma}$ if and only if $\sigma^{-1}(j)$ is in $R$.

Note that if one of our regions is empty or consists of a single leaf, the result is obvious, so we may focus on regions delimited by $1$ or $2$ nodes. 
Let $E_n$ be the event that there exist two regions $R \subset T_n$ and $R' \subset T'_n$ delimited by at most two nodes such that~\eqref{eqn_general_sqrt_bound} fails for the regions $R^{\sigma}$ and $(R')^{\sigma'}$. 
We want to show that $\Pp{E_n} \to 0$ as $n \to +\infty$. 
We will actually show that this is true even if we condition on $T_n, T'_n$, that is
\begin{align}\label{eqn:sqrt_bound_conditioned_on_shape}
	\max_{t,t' \in \mathcal{B}_n} \Pp{E_n \, | \, (T_n, T'_n)=(t,t')} \xrightarrow[n \to +\infty]{} 0.
\end{align}
For this, we fix $t,t' \in \mathcal{B}_n$. 
From now on, we condition on $(T_n,T'_n)=(t,t')$. 
Let $r,r'$ be two regions of $t,t'$ delimited by at most $2$ nodes each. 
Since the number of nodes in those two trees is fixed and equal to $n-2$, there are $O(n^4)$ such pairs $(r,r')$.
We denote by $m,m'$ the respective number of leaves of $r$ and $r'$.
We denote by $\mathrm{Lab}(\sigma,r)$ (resp. $\mathrm{Lab}(\sigma',r')$) the set of labels contained in the region $r^{\sigma}$ (resp. $(r')^{\sigma'}$) of $T_n^{\sigma}$ (resp. $(T'_n)^{\sigma'}$). 
Then $\mathrm{Lab}(\sigma,r)$ and $\mathrm{Lab}(\sigma',r')$ are two independent random uniform subsets of $\{1,\dots,n\}$ of respective size $m$ and $m'$. 

Any common induced subtree to $r^\sigma$ and $(r')^{\sigma'}$ can only use leaf-labels that are common to those two regions, so denoting $L=\mathrm{Lab}(\sigma,r)\cap \mathrm{Lab}(\sigma',r')$ we have
\begin{align*}
	\mathrm{MAST}(r^\sigma, (r')^{\sigma'})=\mathrm{MAST}(r^\sigma{|_L}, (r')^{\sigma'}{|_L}).
\end{align*}
Moreover, conditionally on $(T_n, T'_n,L)$, the random labelled trees $r^\sigma{|_L}$ and $(r')^{\sigma'}{|_L}$ are independent (maybe with different distributions) and have exchangeable leaf-labels taken from $L$.

Piecing things together, we get
\begin{align*}
	&\Ppsq{\mathrm{MAST}(r^\sigma, (r')^{\sigma'})\geq 4e\sqrt{2} \left(  n^\eps \vee \sqrt{\frac{m m'}{n}} \right)}{(T_n,T'_n)=(t,t')}\\
	&\leq \Ppsq{\#L \geq 8 \cdot \left(\frac{m m'}{n} \vee n^{2\eps} \right)}{ (T_n,T'_n)=(t,t')} \\
	&+ \Ppsq{\mathrm{MAST}(r^{\sigma}{|_L}, (r')^{\sigma'}{|_L})\geq 4e\sqrt{2} \cdot  \sqrt{n^{2\eps} \vee\frac{mm'}{n}} \mbox{ and } \#L \leq 8 \cdot \left(\frac{m m'}{n} \vee n^{2\eps} \right) }{ (T_n,T'_n)=(t,t')}\\
	&\leq 2 \exp\left(-\frac{2n^{2\eps}}{3}\right) + \exp\left(-4e\sqrt{2}(\log 2) n^\eps + o(n^\eps) \right).
\end{align*}
Where we have bounded the first term using Lemma~\ref{lem:size of the intersection of two indep subsets}, and the second by Lemma~\ref{lem:square root bounds on size of mast} (which applies by exchangeability) applied to $8 \left( \frac{mm'}{n} \wedge n^{2\eps} \right)$. Moreover, those bounds do not depend on $(t,t')$ and on $(r,r')$, so we can sum over the $O(n^4)$ choices of $(r,r')$ to obtain~\eqref{eqn:sqrt_bound_conditioned_on_shape}.
\end{proof}

\section{Discussion of the results} \label{sec:discussion of the results}

\subsection{Explicit constants}\label{subsec:explicit constants}

The goal of this paragraph is to obtain a quantitative lower bound for the constants appearing in Theorems~\ref{thm_MAST_upper} and~\ref{thm_holder_homeo}. We do not try to optimize the computations.

\paragraph{Choice of $C$ in Lemma~\ref{lem:rough_control_size_regions}.}
In \eqref{eq:chernoff inequality upper bound size regions} in the proof of Lemma~\ref{lem:rough_control_size_regions}, a quick computation shows that  $\Ec{e^{-\lambda \log(W_1)}}= \frac{1}{1-2\lambda}$ so that we are looking for $\lambda<\frac{1}{2}$ and $C>0$ such that $\frac{e^{-\lambda C}}{1-2\lambda} \leq \frac{1}{4}$.
Set $\lambda=\frac{1}{2}-\frac{1}{C}$, assuming $C>2$. We now want $\frac{C}{2}e^{1-\frac{C}{2}} \leq \frac{1}{4}$, so we can take $C=7.5$.
\paragraph{Choice of $\alpha,\beta,\gamma$ in Lemma~\ref{lem_many_good_scales}.}
To get a quantitative bound on the probability $p$ appearing in \eqref{eq:probability that next scale is good} we can use a union bound and the fact that $W_1, W_2,W_3$ all have distribution $\mathrm{Beta}(\frac{1}{2},1)$ with density $\frac{1}{2\sqrt{x}}\cdot \mathbbm{1}_{\intervalleof{0}{1}}(x)$. We get  
\begin{align*}
	\Pp{W_1, W_2,W_3 \geq \alpha} \geq 1 - 3 \Pp{W_1 <\alpha} = 1 - 3\sqrt{\alpha}.
\end{align*}
We decide to take $\alpha = 10^{-6}$. We then have $p\geq \frac{1}{9}\cdot (1 - 3\sqrt{\alpha})=\frac{0.997}{9}$ and we need $\beta< \frac{p}{2}$, so we take $\beta=\frac{1}{20}$.
Finally, to conclude the proof in a quantitative way, we use Hoeffding's inequality and we find $\gamma=(p-2\beta)^2 \approx 10^{-4}$ (its exact value won't be needed for the computations below). 

\paragraph{Choice of $\delta$ in Proposition~\ref{prop_existence_mismatches}.}
We recall from~\eqref{eq:formula for K} that the constant $K$ appearing in Lemma~\ref{lem_counting_candidates} is 
given by $K=\frac{6}{\alpha} \cdot e^{C+2} \cdot  2^{\frac{2C+4}{\beta}}$. 
Now it follows from \eqref{eq:how small delta has to be} in the proof of Proposition~\ref{prop_existence_mismatches} that it is sufficient to choose $\delta$ such that 
$6 \cdot \left( \frac{2\delta K}{\alpha\sqrt{\alpha}} \right)^{\frac{\beta}{2}}<1$.
This is equivalent to
\begin{align*}
	\delta < 6^{-\frac{2}{\beta}}\cdot\frac{\alpha^{\frac{3}{2}}}{2 K}= &\frac{6^{-1-\frac{2}{\beta}}}{2}\cdot e^{-(C+2)}\cdot \alpha^{\frac{5}{2}} \cdot 2^{-\frac{2C+4}{\beta}} 
	= \frac{6^{-41}}{2} \cdot e^{-9.5}\cdot 10^{-15} \cdot 2^{-19\cdot 20} \approx 1.89 \cdot 10^{-166},
\end{align*}
so we can take $\delta = 10^{-166}$. In particular, considering only $\beta k$ good scales is the reason why our final value is very small.

\paragraph{Choice of $\mu, \eta$ in Lemma~\ref{lem:small exponential moment for Z+c}.}
Following the proof of Lemma~\ref{lem:small exponential moment for Z+c}, we choose $\mu$ so that 
\begin{align*}
	e^{\frac12 \mu} \left( 1-\frac{\delta^2}{8} \right) \leq 1 \qquad \text{i.e.} \qquad \mu \leq -2 \log \left( 1-\frac{\delta^2}{8} \right).
\end{align*}
For example, we may choose $\mu = \frac{\delta^2}{10}=10^{-333}$. Moreover, we have defined $\eta$ as $\frac{\mu \beta}{8}=\frac{\mu}{160}>10^{-336}$.
\paragraph{Choice of $\xi$ of Proposition~\ref{prop_homeo_mass}.}
We only need $\xi < \min(\gamma,\eta)$, so we can take $\xi = 10^{-336}$.
\paragraph{Choice of $\rho$ in Corollary~\ref{prop:sum sqrt prod mass} and  $\theta, \eps_{\ref{thm_MAST_upper}}$ in Theorem~\ref{thm_MAST_upper}, and $\eps_{\ref{thm_holder_homeo}}$ in Theorem~\ref{thm_holder_homeo}.
}
In the proof of Corollary~\ref{prop:sum sqrt prod mass}, the constant $\rho$ has to be chosen so that $\rho < \frac{1}{2}\min(\eta,\xi)=\frac{\xi}{2}$. In the proof of Theorem~\ref{thm_MAST_upper}, we can take $\theta=\frac{1}{2C}=\frac{1}{15}$, and finally $\eps_{\ref{thm_MAST_upper}}< \frac{\rho\theta}{2}$, so we can take $\eps_{\ref{thm_MAST_upper}}=10^{-338}$.
Similarly $\eps_{\ref{thm_holder_homeo}}$ has to be chosen such that  $\eps_{\ref{thm_holder_homeo}} < \frac{\eta}{2C}=\frac{\eta}{15}$, so we can take $\eps_{\ref{thm_holder_homeo}}=10^{-338}$.

\subsection{Remarks and open questions}\label{subsec:discussion}

\paragraph{The expected maximum agreement subtree.}
In all the arguments of the paper, the estimates that are stated with probability $1-o_n(1)$ actually hold with probability $1-O(n^{-a})$ for some (small) $a>0$ (or with probability $1-O(e^{-ak})$, which is equivalent since we take $k$ of order $\log n$). Hence, by Theorem~\ref{thm_MAST_upper} and Lemma~\ref{lem:square root bounds on size of mast}, we can write
\[ \Pp{\MAST(T_n,T'_n) \geq n^{1/2-\eps_{\ref{thm_MAST_upper}}}} \leq n^{-a}, \quad \text{and} \quad \Pp{\MAST(T_n,T'_n) \geq Cn^{1/2}} \leq e^{-c\sqrt{n}}\]
for some constants $C,c>0$. Therefore, since $\MAST(T_n,T'_n)$ is bounded by $n$, we can write
\[ \Ec{\MAST(T_n,T'_n)} \leq n^{1/2-\eps_{\ref{thm_MAST_upper}}} + n^{-a} \times C\sqrt{n} + e^{-c\sqrt{n}} \times n = O \left( n^{1/2-\min(\eps_{\ref{thm_MAST_upper}},a)} \right),\]
so the \emph{expected} MAST is also much less than $\sqrt{n}$.

\paragraph{Other models of random trees.}
Another natural random tree model where the Maximum Agreement Subtree has been investigated~\cite{bryant_size_2003, bernstein_bounds_2015} is the Yule--Harding model $Y_n$, i.e. the model where the binary tree $Y_n$ is obtained from $Y_{n-1}$ by choosing a leaf uniformly at random and splitting it into one node and two leaves. The best known lower and upper bounds for this model are given by~\cite{bernstein_bounds_2015} and are respectively of order $n^{0.344}$ and $\sqrt{n}$.
It seems likely to us that adaptations of the ideas developed in the present paper could be used to prove Theorem~\ref{thm_MAST_upper} for the Yule--Harding model.

Beyond the binary case, another natural question would be to try to estimate the MAST between more general Galton--Watson trees. 
We believe that similar results could be obtained provided the tail of the offspring distribution is light enough (with the technical difficulty that the coupling between the discrete model and the Brownian tree would not be as simple). 
On the other hand, when the tail is very heavy, the MAST should become larger because of star-shaped subtrees.

\paragraph{Optimal regularity for homeomorphisms between continuous random structures.}
It seems natural to introduce the following quantity
\begin{align*}
	\gamma_{+} := \inf\enstq{\gamma \geq 0}{\text{there a.s. exists no homeomorphism $\Psi : \Tt\rightarrow \Tt'$ that is $\gamma$-Hölder}},
\end{align*}
which, in some sense, captures how metrically different two independent realizations of the Brownian tree are. 
Theorem~\ref{thm_holder_homeo} ensures that $\gamma_{+} \leq 1-10^{-338}$, and as it was pointed out in the introduction, Aldous's construction in \cite{aldous_largest_2022} amounts to constructing a $(5-2\sqrt{6})$-Hölder homeomorphism between $\Tt$ and $\Tt'$ so $\gamma_{+}\geq 5-2\sqrt{6} \approx 0.1010$. 
It would be an interesting direction of research to find tighter bounds on $\gamma_{+}$ or a good heuristic as to what the value of $\gamma_{+}$ may be.

The question of finding the optimal regularity for homeomorphisms between independent copies of random metric spaces can be asked for many other models. 
It is for example natural to ask whether an analog of Theorem~\ref{thm_holder_homeo} could be proved for some models with the topology of the plane such as the Brownian map~\cite{LG11, Mie11}, or more generally Liouville Quantum Gravity metrics~\cite{GM21}.

\appendix

\section{Construction of a Hölder homeomorphism}
\label{app:construction of homeomorphism}
\begin{theorem}\label{thm:existence_holder_homeomorphism}
	Let $\Tt$ and $\Tt'$ be two independent copies of the Brownian tree. 
	There almost surely exists a homeomorphism $\Psi$ from $\Tt$ to $\Tt'$ such that $\Psi$ and $\Psi^{-1}$ are both $\gamma$-H\"older, for any $\gamma < 5-2\sqrt{6}$. 
\end{theorem}
\begin{proof}
	Consider the Aldous decomposition of the tree $\mathcal{T}$, as described in Section~\ref{subsec:aldous recursive decomposition}, with the associated collection $(x[\ii])_{\ii \in \mathbb{T}_3 \backslash \{ \emptyset\}}$ of points and the collection $(b[\mathbf{i}])_{\mathbf{i}\in \mathbb T_3}$ of branching points.
	Let also $\left( \mathcal{T}', (x'[\ii])_{\ii \in \mathbb{T}_3 \backslash \{ \emptyset\}}, (b'[\mathbf{i}])_{\mathbf{i}\in \mathbb T_3} \right)$ be an independent copy of $\left( \mathcal{T}, (x[\ii])_{\ii \in \mathbb{T}_3 \backslash \{ \emptyset\}}, (b[\mathbf{i}])_{\mathbf{i}\in \mathbb T_3} \right)$, so that in particular $\Tt$ and $\Tt'$ are Brownian trees.
	We construct a map $\Psi : \enstq{b[\mathbf{i}]}{ \mathbf{i}\in \mathbb T_3} \rightarrow \enstq{b'[\mathbf{i}]}{\mathbf{i}\in \mathbb T_3}$ by setting $\Psi(b[\mathbf{i}])= b'[\mathbf{i}]$. 
	We are going to show that for any $\gamma < 5-2\sqrt{6}$, this map is almost surely $\gamma$-Hölder.
	Then, by density of the set of branching points that we consider, we can extend $\Psi$ in a unique way into a $\gamma$-Hölder map from $\Tt$ to $\Tt'$. 
	By symmetry, the same can be done with $\Psi^{-1}$, so the map $\Psi$ is indeed a homeomorphism, with $\Psi$ and $\Psi^{-1}$ that are $\gamma$-Hölder, almost surely.
	Since this holds almost surely for a countable sequence of values of $\gamma$ tending to $5-2\sqrt{6}$, the claim of the theorem holds. 
	
	From now on, we fix $\gamma < 5-2\sqrt{6}$ and introduce, for any $k\geq 1$,  
	\begin{align*}
		S_k := \max_{\substack{\mathbf{i}, \mathbf{j}\in \mathbb{T}_3 \\ |\mathbf{i}|, |\mathbf{j}| \leq k}} \frac{d_{\Tt'}(b'[\mathbf{i}],b'[\mathbf{j}])}{ \left(d_{\Tt}(b[\mathbf{i}],b[\mathbf{j}])\right)^\gamma} \qquad \text{and} \qquad S := \sup_{\mathbf{i}, \mathbf{j}\in \mathbb{T}_3} \frac{d_{\Tt'}(b'[\mathbf{i}],b'[\mathbf{j}])}{ \left(d_{\Tt}(b[\mathbf{i}],b[\mathbf{j}])\right)^\gamma}= \sup_{k\geq 1} S_k. 
	\end{align*}
	Proving that $\Psi$ is a.s.\ $\gamma$-Hölder continuous on $\enstq{b[\mathbf{i}]}{ \mathbf{i}\in \mathbb T_3}$ reduces to proving that $S$ is almost surely finite. 
	We prove that claim in two steps:
	\begin{itemize}
		\item First, we obtain a control on the distances between "neighbor branching points" of level $k$ that will hold true simultaneously for all pairs of such points. 
		Such points appear as pairs of the form $(x[\ii1], x[\ii2])$ in the decomposition described in Section~\ref{subsec:aldous recursive decomposition}.  
		This estimate is the content of the next lemma.
		\item Then, we use this estimate to bound $S_k-S_{k-1}$, so that $S_k$ increases to a finite limit.
	\end{itemize}
	\begin{lemma}\label{lem:distance neighbouring points}
		Almost surely, for $k$ large enough, for any $\mathbf{i}\in \mathbb{T}_3^k$, we have
		\begin{align}\label{eq:distance estimate for neighbouring points}
			d_{\Tt'} \left( x'[\mathbf{i}1],x'[\mathbf{i}2] \right) \leq k^{-2}\cdot d_{\Tt} \left( x[\mathbf{i}1],x[\mathbf{i}2] \right)^\gamma.
		\end{align}
	\end{lemma}
	\begin{proof}[Proof of Lemma~\ref{lem:distance neighbouring points}]
		Recall Proposition~\ref{prop_independence_decomposition}, which states that for any $\ii\in \mathbb{T}_3^k$, conditionally on $\mathcal{R}[\mathbf{i}]$, the region $(\mathcal{R}[\mathbf{i}], x[\mathbf{i}1],x[\mathbf{i}2])$ has the distribution of a bi-pointed Brownian tree of mass $|\mathcal{R}[\mathbf{i}]|$. 
		Therefore, for any $\mathbf{i}\in \mathbb{T}_3$, conditionally on $|\mathcal{R}[\mathbf{i}]|$, the distance $d_{\Tt} \left( x[\mathbf{i}1],x[\mathbf{i}2] \right)$, is distributed as 
		\begin{align*}
			d_{\Tt} \left( x[\mathbf{i}1],x[\mathbf{i}2] \right) \overset{\mathrm{d}}{=}|\mathcal{R}[\mathbf{i}]|^{\frac{1}{2}} \cdot Y,
		\end{align*}
		where $Y$ has the Rayleigh distribution with density $4xe^{-2x^2}\ \mathrm{d}x$ on $\intervallefo{0}{\infty}$, see for example \cite[Theorem~2.11]{legall_random_2005}. 
		The same is of course true for analogous quantities in $\Tt'$.
		
		For any $\gamma \in \intervalleoo{0}{1}$ and $\ii \in \mathbb{T}_3^k$ we write
		\begin{align*}
			&\Pp{d_{\Tt'} \left( x'[\mathbf{i}1],x'[\mathbf{i}2] \right)>k^{-2}\cdot d_{\Tt} \left( x[\mathbf{i}1],x[\mathbf{i}2] \right)^\gamma}\\
			&= \Pp{|\mathcal{R}'[\mathbf{i}]|^{\frac{1}{2}} \cdot Y' > k^{-2} \cdot  \left(|\mathcal{R}[\mathbf{i}]|^{\frac{1}{2}} \cdot Y\right)^\gamma}\\
			&=\Pp{\frac{|\mathcal{R}'[\mathbf{i}]|^{\frac{1}{2}} \cdot Y'}{ \left(\mathcal{R}[\mathbf{i}]|^{\frac{1}{2}} \cdot Y\right)^\gamma}> k^{-2}}\\
			&\leq k^{2\lambda} \cdot \Ec{|\mathcal{R}'[\mathbf{i}]|^{\frac{\lambda}{2}}} \cdot \Ec{|\mathcal{R}[\mathbf{i}]|^{-\frac{\lambda \gamma }{2}}} \cdot \Ec{(Y')^\lambda} \cdot \Ec{Y^{-\gamma \lambda}},
		\end{align*}
		where in the last line we raised everything to some power $\lambda<\frac{2}{\gamma}$ and used the Markov inequality. The condition $\lambda<\frac{2}{\gamma}$ ensures that the quantity $C(\gamma,\lambda):=\Ec{(Y')^\lambda} \cdot \Ec{Y^{-\gamma \lambda}}$ is finite.
		On the other hand, it follows from Proposition~\ref{prop_independence_decomposition} that $|\mathcal{R}[\ii]|$ is the product of $k$ i.i.d.\ $\mathrm{Beta}(1/2,1)$-random variables. 
		Since the $p$-th moment of such a Beta variable is given by $\frac{1}{2p+1}$ for $p>-\frac{1}{2}$, we get
		\begin{align*}
			\Ec{|\mathcal{R}'[\mathbf{i}]|^{\frac{\lambda}{2}}}\Ec{|\mathcal{R}[\mathbf{i}]|^{-\frac{\lambda \gamma }{2}}} =\left( \frac{1}{(1+\lambda)(1-\gamma \lambda)}\right)^k.
		\end{align*}
		The expression $(1+\lambda)(1-\gamma \lambda)$ appearing in the denominator is maximized in $\lambda$ at $\lambda = \frac{1-\gamma}{2\gamma}<\frac{2}{\gamma}$ and the value of the maximum is $1+\frac{(1-\gamma)^2}{4\gamma}$. 
		This value decreases in $\gamma$ and attains $3$ at $5- 2\sqrt{6}$. 
		Hence, for $\gamma < 5-2\sqrt{6}$ we can write a union bound over all $\mathbf{i} \in \mathbb{T}_3^k$ to get  
		\begin{align*}
			\Pp{ \exists \mathbf{i} \in \mathbb{T}_3^k : d_{\Tt'} \left( x'[\mathbf{i}1],x'[\mathbf{i}2] \right)>k^{-2}\cdot d_{\Tt} \left( x[\mathbf{i}1],x[\mathbf{i}2]\right)^\gamma} \leq k^{\frac{1-\gamma}{\gamma}} \cdot\left(\frac{3}{1+\frac{(1-\gamma)^2}{4\gamma}} \right)^k \cdot C(\gamma, \textstyle\frac{1-\gamma}{2\gamma}),
		\end{align*}
		which is summable in $k\geq 1$. This ensures using the Borel-Cantelli lemma that our lemma holds true.
	\end{proof}

	From now on, we let $K_0$ be the first time for which \eqref{eq:distance estimate for neighbouring points} holds for all $k \geq K_0$, and fix $k \geq K_0$.
	Consider two indices in $\mathbb{T}_3$, say $\mathbf{i}$ and $\mathbf{j}$, such that $|\ii|,|\jj| \leq k$,  and consider the path $p_{\ii \to \jj}$ in the tree $\Tt$ going from $b[\ii]$ to $b[\jj]$. 
	We denote by $\mathbf{l}$  the index such that $b[\mathbf{l}]$ is the first point of $\enstq{b[\mathbf{i}]}{ \mathbf{i}\in \mathbb T_3, \ |\ii| \leq k-1}$ visited by $p_{\ii \to \jj}$ (note that this index can possibly be $\ii$ itself, if $|\ii| \leq k-1$).
	Similarly we let $\mathbf{m}$ be so that $b[\mathbf{m}]$ is the last such point. 
	We can then write
	\begin{align*}
		d_{\Tt}(b[\mathbf{i}],b[\mathbf{j}]) = d_{\Tt}(b[\mathbf{i}],b[\mathbf{l}])+d_{\Tt}(b[\mathbf{l}],b[\mathbf{m}])+d_{\Tt}(b[\mathbf{m}],b[\mathbf{j}]).
	\end{align*}
	Now, either $b[\mathbf{i}]=b[\mathbf{l}]$, in which case their distance is $0$, or $b[\mathbf{i}]\neq b[\mathbf{l}]$ and in that case this pair of branching points can be written as $\{b[\mathbf{i}], b[\mathbf{l}]\} =\{x[\mathbf{u}1],x[\mathbf{u}2]\}$ for some $\mathbf{u}\in \mathbb{T}_3$ with $|\mathbf{u}|=k+1$. Indeed, two consecutive branching points of depth at most $k$ along $p_{\ii \to \jj}$ are always of this form, and it always holds that one has depth exactly $k$ and the other at most $k-1$, so $b[\mathbf{l}]$ is either $b[\ii]$ or the second branching point of $p_{\ii \to \jj}$. The same considerations hold in $\Tt'$ so in any case, by Lemma~\ref{lem:distance neighbouring points}, we have
	\begin{align*}
		d_{\Tt'} \left( b'[\mathbf{i}],b'[\mathbf{l}] \right) \leq (k+1)^{-2} \cdot d_{\Tt} \left( b[\mathbf{i}],b[\mathbf{l}] \right)^\gamma,
	\end{align*}
	and the analogous inequality is true for $d_{\Tt'}(b'[\mathbf{m}],b'[\mathbf{j}])$.
	
	Therefore, for $k\geq K_0$, we have 
	\begin{align*}
		&d_{\Tt'} \left( b'[\mathbf{i}],b'[\mathbf{j}] \right) \\
		&= d_{\Tt'} \left( b'[\mathbf{i}],b'[\mathbf{l}] \right)+d_{\Tt'} \left( b'[\mathbf{l}],b'[\mathbf{m}] \right)+d_{\Tt'} \left( b'[\mathbf{m}],b'[\mathbf{j}] \right) \\
		&\leq (k+1)^{-2}\cdot d_{\Tt} \left( b[\mathbf{i}],b[\mathbf{l}]\right)^\gamma + S_{k-1} \cdot d_{\Tt} \left( b[\mathbf{l}],b[\mathbf{m}]\right)^\gamma + (k+1)^{-2}\cdot d_{\Tt} \left( b[\mathbf{m}],b[\mathbf{j}] \right)^\gamma\\
		&\leq \left( (k+1)^{-\frac{2}{1-\gamma}} + S_{k-1}^{\frac{1}{1-\gamma}} +(k+1)^{-\frac{2}{1-\gamma}}\right)^{1-\gamma} \cdot \left( d_{\Tt}(b[\mathbf{i}],b[\mathbf{l}])+d_{\Tt}(b[\mathbf{l}],b[\mathbf{m}])+d_{\Tt}(b[\mathbf{m}],b[\mathbf{j}])\right)^\gamma\\
		&\leq \left( S_{k-1}^\frac{1}{1-\gamma} +2(k+1)^{-\frac{2}{1-\gamma}} \right)^{1-\gamma} \cdot d_{\Tt} \left( b[\mathbf{i}],b[\mathbf{j}]\right)^\gamma,
	\end{align*}
	where we used the Hölder inequality with $p=\frac{1}{1-\gamma}$ and $q=\frac{1}{\gamma}$.
	This ensures that for $k\geq K_0$, we have
	\begin{align*}
		S_{k}\leq \left( S_{k-1}^\frac{1}{1-\gamma} +2(k+1)^{-\frac{2}{1-\gamma}}\right)^{1-\gamma} \leq S_{k-1}+\frac{2}{(k+1)^2},
	\end{align*}
	and so $\sup_{k\geq 1} S_k<\infty$. This concludes the proof.
\end{proof}

\bibliographystyle{siam}
\bibliography{biblio_mast.bib}
\end{document}